\pdfoutput=1
\RequirePackage{fix-cm}
\documentclass[smallextended,envcountsect,envcountsame]{svjour3}
\smartqed

\usepackage{amssymb}
\usepackage{algorithm}
\usepackage{algpseudocode}
\usepackage{bm}
\usepackage{booktabs}
\usepackage{capt-of}
\usepackage{color,colortbl}
\usepackage{float}
\usepackage{hyphenat}
\usepackage{makecell}
\usepackage{multirow}
\usepackage{mathtools}
\usepackage{pifont}
\usepackage{siunitx}
\usepackage{tabularx,array}
\usepackage[sort,compress]{cite}
\usepackage{amsfonts}
\usepackage{graphicx}
\usepackage{tikz}
\usetikzlibrary{decorations.pathreplacing,tikzmark,calc,positioning,arrows.meta}
\usepackage{wrapfig}
\usepackage{epstopdf}
\usepackage{enumitem}
\usepackage[hidelinks]{hyperref}
\usepackage[capitalise,noabbrev,nameinlink]{cleveref}

\makeatletter
\providecommand{\theHALG@line}{\thealgorithm.\arabic{ALG@line}}
\makeatother

\ifpdf
  \DeclareGraphicsExtensions{.eps,.pdf,.png,.jpg}
\else
  \DeclareGraphicsExtensions{.eps}
\fi

\setlist[enumerate]{leftmargin=.5in}
\setlist[itemize]{leftmargin=.5in}

\numberwithin{equation}{section}
\numberwithin{algorithm}{section}

\Crefname{subsection}{Section}{Sections}

\newcommand{\creflem}[1]{\hyperref[#1]{Lemma~\ref*{#1}}}
\newcommand{\crefdef}[1]{\hyperref[#1]{Definition~\ref*{#1}}}
\newcommand{\crefrem}[1]{\hyperref[#1]{Remark~\ref*{#1}}}
\crefname{algorithm}{Algorithm}{Algorithms}
\Crefname{algorithm}{Algorithm}{Algorithms}



\usepackage{amsopn}
\DeclareMathOperator{\diag}{diag}
\DeclareMathOperator{\offdiag}{off\text{-}diag}

\journalname{}
\makeatletter\def\makeheadbox{}\makeatother
\title{Simultaneous Matrix-Free HSS Compression and Factorization with Adaptive Ranks}
\titlerunning{Matrix-Free HSS Factorization with Adaptive Ranks}
\author{Elly Do \and Chao Chen\thanks{Corresponding author: Chao Chen. Submitted to the
Journal of Scientific Computing.}}
\authorrunning{E. Do and C. Chen}
\institute{E. Do \and C. Chen \at
  Department of Mathematics, North Carolina State University,
  Raleigh, NC, USA\\
  \email{ntdo2@ncsu.edu; cchen49@ncsu.edu}}
\date{}

\ifpdf
\hypersetup{
  pdftitle={Simultaneous Matrix-Free HSS Compression and Factorization with Adaptive Ranks},
  pdfauthor={Elly Do and Chao Chen}
}
\fi

\newcommand{\A}{\mathcal{L}}

\let\epsilon\varepsilon
\newcommand{\x}{\mathsf{x}}

\newcommand{\gray}[1]{{\textcolor{gray}{#1}}}

\definecolor{Gray}{gray}{0.9}
\newcolumntype{g}{>{\columncolor{Gray}}c}

\newcolumntype{H}{>{\setbox0=\hbox\bgroup}c<{\egroup}@{}}

\usepackage[normalem]{ulem}

\usepackage{pdfpages}
\graphicspath{{figs/}{}}

\begin{document}
\maketitle

\begin{abstract}
Many dense matrices in scientific computing have hierarchically low-rank
off-diagonal structure but are accessible only through routines that apply the matrix and, in general, its transpose or adjoint to vectors, that is, through matrix--vector products (matvecs). We present a fully matrix-free algorithm that compresses such a matrix into
Hierarchically Semi-Separable (HSS) form and factorizes it simultaneously with the off-diagonal ranks chosen
adaptively from a prescribed tolerance. The method uses $\mathcal{O}(k)$ matvecs with each operator,
where $k$ is the largest off-diagonal rank found. Building on fixed-rank
block nullification, it samples the matrix in small random batches and enlarges each local basis until a randomized error estimate falls below the tolerance. Unitary
nullspace updates give
$\mathcal{O}(k^3)$ local dense work, the same asymptotic cost as when the
ranks are known in advance. Because compression and elimination proceed together, no separate factorization step is required. Excluding the
matvecs, the factorization uses $\mathcal{O}(N)$ arithmetic
and storage when the off-diagonal ranks are bounded, and its inverse is applied
in $\mathcal{O}(N)$ time. Experiments on Poisson and Helmholtz Schur
complements, Mat\'{e}rn covariance matrices, and a complex
Lippmann--Schwinger equation demonstrate the factorization as both a direct
solver and a preconditioner on problems with up to millions of unknowns.

\keywords{Matrix-free algorithm \and
  Adaptive rank selection \and
  Hierarchically semi-separable matrix \and
  Hierarchical matrix factorization \and
  Fast direct solver \and
  Randomized range finder}
\subclass{65F05 \and 65F08 \and 65F55 \and 65N22 \and 68W20}
\end{abstract}

\section{Introduction}
Many scientific applications produce matrices that are dense but
\emph{data sparse}: their off-diagonal blocks admit accurate low-rank
approximations. Hierarchical rank-structured formats use this property
to reduce storage and the cost of matrix operations from quadratic or
cubic to linear or nearly linear complexity. Common formats include
$\mathcal{H}$-matrices~\cite{bebendorf2008hierarchical,
borm2003introduction,hackbusch1999asparse,hackbusch2015hierarchical},
$\mathcal{H}^2$-matrices~\cite{bebendorf2008hierarchical,borm2010efficient,
borm2003introduction,hackbusch2015hierarchical}, Hierarchically
Semi-Separable (HSS) matrices~\cite{chandrasekaran2006afast,
chandrasekaran2007afast,xia2010superfast}, and Hierarchically
Off-Diagonal Low-Rank (HODLR) matrices~\cite{ambikasaran2013a,
martinsoson2009afast}. Flat block and tile low-rank formats are also used in
high-performance dense and multifrontal solvers
\cite{amestoy2017complexity,boukaram2024cholesky}. We focus on the HSS format. Its weak
admissibility gives it a simpler block structure than $\mathcal{H}^2$, and its
nested bases permit $\mathcal{O}(N)$ factorization when the ranks remain
bounded, whereas the complexity of a HODLR factorization is superlinear.
In applications, however, these ranks are generally unknown in advance and
may vary among off-diagonal blocks, so a practical method must discover them as it runs.

A second challenge is access to the matrix. Many HSS construction and
factorization methods require selected entries or subblocks of the dense
matrix $A$. We instead assume only black-box matrix--vector products, or \emph{matvecs}, $x\mapsto Ax$ and $x\mapsto A^\top x$ for a real matrix; for a general complex matrix, $A^*$
replaces $A^\top$. Such \emph{matrix-free} access arises, for example, for
separator Schur complements applied through sparse solves~\cite{davis2016survey},
kernel matrices applied by FFTs or fast multipole methods
\cite{greengard1987fast,askham2021fmm2d}, and Hessians applied through
forward and adjoint solves~\cite{ambartsumyan2020hierarchical,
hartland2023hierarchical}. Although these matvecs can be fast, a global sketch mixes diagonal and off-diagonal contributions. Hierarchical compression from matvecs alone is therefore nontrivial. A one-time
factorization is valuable in each of these settings because the same
operator, or one that changes only through its right-hand side, recurs
across many solves: a sparse direct solver's Schur complement reappears
at every time step of an implicit scheme, a kernel matrix is solved for
many query points or realizations, and a Hessian is solved repeatedly
inside an optimization or inverse-problem iteration. A factorization
built once amortizes over all of these, where a matrix-free iterative
solve alone would repeat its work for every new right-hand side.

We address these coupled challenges with an algorithm that compresses an HSS
approximation and factorizes it simultaneously, in one upward pass, from
matrix-free access alone and with the ranks chosen adaptively. It uses $\mathcal{O}(k)$ matvecs with each of $A$ and $A^\top$ and no matrix-entry
queries, where $k$ is the largest off-diagonal rank found. The algorithm
selects the local ranks from a prescribed tolerance and continues to use every
sample generated as the ranks grow. We first present the compression and factorization framework with the ranks assumed known, then develop its adaptive extension. Our contributions are detailed in
\Cref{ss:contributions}.

\subsection{Existing Work}\label{s:intro-related}

Related methods fall broadly into bottom-up and top-down (peeling)
approaches. \Cref{t:related-work} summarizes their main features.

\begin{table}[t]
    \centering
    \caption{Comparison of related hierarchical-matrix construction methods.
    Here \emph{matrix-free} means that the construction uses matvecs with the matrix and, when needed, its transpose or adjoint, but no entry evaluations.
    Methods are grouped as bottom-up and top-down (peeling or
    recursive-projection) approaches. In the last column, $k$ is the maximum off-diagonal rank.}
    \label{t:related-work}
    \footnotesize
    \setlength{\tabcolsep}{4pt}
    \begin{tabular}{@{}llccc@{}}
    \toprule
    \textbf{Approach} & \textbf{Method} & \textbf{Matrix-free} & \textbf{Adaptive} & \textbf{Matvecs} \\
    \midrule
     \multirow{7}{*}{Bottom-up}
       & This work & \checkmark & \checkmark & $\mathcal{O}(k)$ \\
       & Martinsson~\cite{martinsoson2011afast}  & $\times$ & $\times$ & $\mathcal{O}(k)$ \\
       & Gorman~\cite{gorman2019robust} & $\times$ & \checkmark & $\mathcal{O}(k)$ \\
       & Boukaram~\cite{boukaram2025adaptive}  & $\times$ & \checkmark & $\mathcal{O}(k)$ \\
       & Levitt~\cite{levitt2024linear}  & \checkmark & $\times$ & $\mathcal{O}(k)$ \\
       & Yesypenko~\cite{yesypenko2026randomized}  & \checkmark & $\times$ & $\mathcal{O}(k)$ \\
       & Amsel et al.~\cite{amsel2026quasioptimal} & \checkmark & $\times$ & $\mathcal{O}(k\log(N/k))$ \\
     \midrule
     \multirow{4}{*}{Top-down}
       & Lin~\cite{lin2011fast} & \checkmark & $\times$ & $\mathcal{O}(k\log N)$ \\
       & Martinsson~\cite{martinsoson2016compressing} & \checkmark & \checkmark & $\mathcal{O}(k\log N)$ \\
       & Boukaram~\cite{boukaram2019randomized} & \checkmark & \checkmark & $\mathcal{O}(k\log N)$ \\
       & Halikias--Townsend~\cite{halikias2024structured} & \checkmark & $\times$ & $\mathcal{O}(k)$ \\
    \bottomrule
    \end{tabular}
\end{table}

\paragraph{Bottom-up approaches.}
Martinsson's construction~\cite{martinsoson2011afast} uses
$\mathcal{O}(k)$ matvecs but also queries a small number of matrix entries.
Gorman et al.~\cite{gorman2019robust} make this partially matrix-free HSS
construction adaptive by enlarging Gaussian sketches in blocks and estimating
residual Frobenius norms. Once their stopping test passes, they fix the
final rank by a column-pivoted QR factorization of the entire accumulated
sample, at a cost that grows with the square of the accumulated rank. The
pruning step of \Cref{s:rank-refinement} instead examines only the most
recently added block of columns, at a cost that depends on the block size
but not on the rank. Boukaram et al.~\cite{boukaram2025adaptive} extend
the adaptive framework to $\mathcal{H}^2$ matrices, likewise with entrywise
access. Levitt and Martinsson~\cite{levitt2024linear} introduce block
nullification for fully matrix-free HSS compression using $\mathcal{O}(k)$
matvecs, with the target rank prescribed. Yesypenko and
Martinsson~\cite{yesypenko2026randomized} combine fully matrix-free sampling
with randomized strong recursive skeletonization (RSRS) to compress and factorize
$\mathcal{H}^2$ matrices, again with prescribed ranks. Amsel et
al.~\cite{amsel2026quasioptimal} analyze a modification of the
Levitt--Martinsson construction and establish a quasi-optimal HSS
approximation guarantee using $\mathcal{O}(k\log(N/k))$ matvecs.

\paragraph{Top-down approaches.}
Lin et al.~\cite{lin2011fast} introduce peeling algorithms for fixed-rank
$\mathcal{H}$ and $\mathcal{H}^2$ matrices. Martinsson
\cite{martinsoson2016compressing} makes peeling adaptive for HODLR and HSS
construction, and Boukaram et al.~\cite{boukaram2019randomized} develop an
adaptive $\mathcal{H}^2$ construction for GPUs. These methods are matrix-free
but require $\mathcal{O}(k\log N)$ matvecs. For HSS matrices, Halikias and
Townsend~\cite{halikias2024structured} replace recursive peeling by recursive
projection and require $\mathcal{O}(k)$ matvecs with prescribed rank. Their
recovery procedure, however, solves a large least-squares system, for which
they give an $\mathcal{O}(N^3k^2)$ arithmetic bound.

Thus, existing methods provide different subsets of four desirable properties:
fully matrix-free access, tolerance-driven local rank selection, $\mathcal{O}(k)$ matvecs, and simultaneous compression and factorization. The closest fully matrix-free factorization is the RSRS
method of Yesypenko and Martinsson~\cite{yesypenko2026randomized}, which
requires the ranks in advance. The closest adaptive constructions use either
entry access~\cite{gorman2019robust,boukaram2025adaptive} or
$\mathcal{O}(k\log N)$ matvecs~\cite{martinsoson2016compressing,
boukaram2019randomized}. To our knowledge, no previous HSS method combines all
four properties.

\subsection{Contributions}\label{ss:contributions}
Our main contributions are the following.

\paragraph{Simultaneous matrix-free compression and factorization.}
We construct an HSS approximation directly in factorized form from the
Gaussian sketches $Y=A\Omega$ and $Z=A^\top\Phi$, using $\mathcal{O}(k)$ matvecs with each operator and without querying any entry of
$A$. Block nullification~\cite{levitt2024linear} removes each unknown
diagonal contribution from the global sketches, and randomized range
finding~\cite{martinsson2020randomized} captures the resulting off-diagonal
spaces. Orthogonal changes of basis expose redundant local variables. We
recover their diagonal interactions from the transformed samples, eliminate
them by structured block-triangular transformations, and prove that the
remaining Schur complement has the same HSS structure. The construction can therefore recurse up the tree, so compression and factorization proceed simultaneously, in one upward pass. \Cref{fig:actions-on-A} summarizes these transformations.

\paragraph{Adaptive rank selection at fixed-rank asymptotic cost.}
We extend the fixed-rank block-nullification framework of
Levitt and Martinsson~\cite{levitt2024linear} so that the HSS ranks are
determined during the factorization from a prescribed tolerance, with no
rank known in advance. The construction draws Gaussian test columns in
small batches and reuses every sample and every accepted basis vector
across batches. At each node, a randomized Frobenius-norm test on the
reused samples decides when to stop. A freeze margin keeps sampling
fluctuations from growing a node that has converged, the nodes of a level
are completed to a common rank, and an energy-based pruning step uses the
sample of the passing batch to remove the rank overshoot caused by the
block size, examining only the most recently added block. Fresh test
columns drawn at coarser levels are carried through the stored factors, so
sampling and factorization stay interleaved in one upward pass. The kernel
that keeps this affordable is a Householder row update that extends each
nullspace basis by one block without recomputing its existing columns.
Because the update is unitary, it preserves orthogonality to working
precision. Over $J$ rank trials, it reduces the local dense work from
$\mathcal{O}(Jk^3)$ for restarted recomputation to $\mathcal{O}(k^3)$,
matching the asymptotic cost when $k$ is known in advance.

\paragraph{Complexity and numerical validation.}
Excluding the matvecs, the adaptive factorization has
$\mathcal{O}(N)$ arithmetic and storage for bounded HSS ranks. For the
standard rank-growth model in two dimensions, the factorization uses
$\mathcal{O}(N^{1.5}\log N)$ arithmetic and $\mathcal{O}(N\log N)$ storage.
In both cases,
the cost of one solve with the computed factorization matches the storage. Experiments on
Poisson and Helmholtz Schur complements and Mat\'{e}rn covariance matrices
test the factorization as both a direct solver and a preconditioner. A complex
non-Hermitian Lippmann--Schwinger equation (\Cref{app:lippmann}) tests the
method outside the real symmetric setting.

\begin{figure}[H]
\centering
\includegraphics[width=0.9\textwidth]{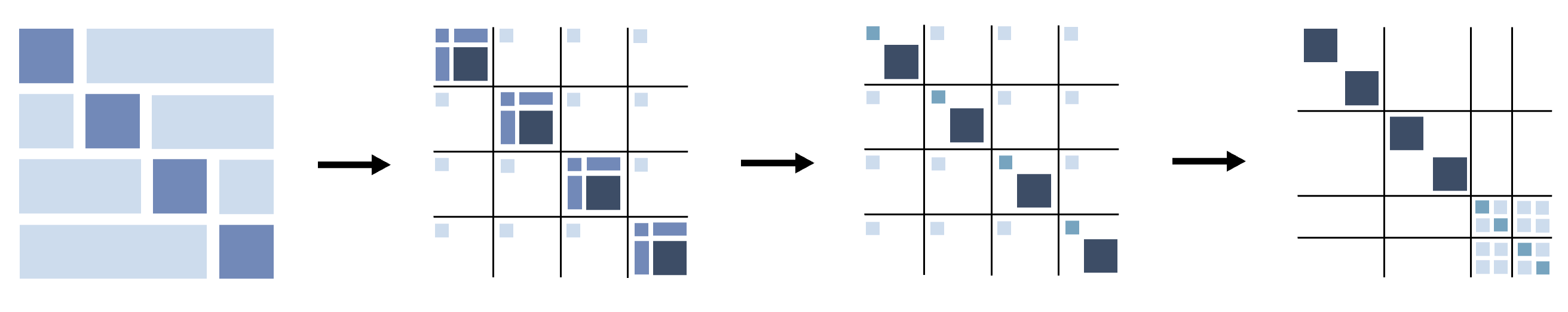}

\vspace{0.5em}
\begin{tabular}{@{}>{\centering\arraybackslash}p{0.2\textwidth}
                >{\centering\arraybackslash}p{0.22\textwidth}
                >{\centering\arraybackslash}p{0.22\textwidth}
                >{\centering\arraybackslash}p{0.22\textwidth}@{}}
$A$ & compression & elimination & permutation
\end{tabular}

\caption{Schematic overview of the algorithm. Orthogonal basis changes
compress the off-diagonal blocks, structured elimination decouples redundant
variables, and a permutation exposes a smaller Schur complement with the same
structure as $A$. Recursing on this Schur complement yields a block-diagonal factorization through
transformations that can be inverted efficiently. The partition is the
running example defined by the three-level HSS tree of
\Cref{fig:hss-tree-diagram}.}
\label{fig:actions-on-A}
\end{figure}

\subsection{Organization and Notation}\label{ss:organization-notation}
\Cref{s:background} reviews HSS structure, randomized range finding, and
randomized Frobenius-norm estimation. \Cref{s:fixed-rank} develops the
compression and factorization framework with the ranks assumed known.
\Cref{s:adaptive-rank} removes that assumption and determines the ranks from a
prescribed tolerance. \Cref{s:result}
presents the numerical experiments, and \Cref{s:end} concludes the paper.

\paragraph{Notation.}
Matrices and vectors may be real or complex. We write $M^*$ for the conjugate
transpose of $M$, $I$ for the identity matrix (of a size clear from context),
$\|\cdot\|_{\mathrm{F}}$ for the Frobenius norm, and $\mathbb{E}[\cdot]$ for
expectation. A \emph{standard Gaussian} matrix has independent
$\mathcal N(0,1)$ entries in the real case and independent standard circular
complex Gaussian entries in the complex case. Equivalently, each complex
entry is $(g_1+\mathrm{i}g_2)/\sqrt{2}$ for independent
$g_1,g_2\sim\mathcal N(0,1)$. For a matrix $M$ and index sets
$\mathcal S$ and $\mathcal T$, $M(\mathcal S,:)$ and $M(:,\mathcal T)$
denote the rows in $\mathcal S$ and the columns in $\mathcal T$, and
$M(j,:)$ and $M(:,j)$ a single row or column, as in MATLAB. All remaining
notation is introduced where it is first used.
\section{Background}\label{s:background}
This section sets the notation and collects the three ingredients the
algorithm uses: the hierarchical low-rank structure of an HSS matrix,
the randomized range finder, and the Frobenius-norm estimator for a matrix
that is globally low rank. \Cref{s:fixed-rank,s:adaptive-rank} build on
all three.

\subsection{HSS Tree and Matrix Structure}

To define an HSS matrix $A \in \mathbb{C}^{N \times N}$, we need to refer to its off-diagonal blocks.
We therefore first introduce a tree that partitions the row and column indices in the same way.
We use MATLAB-style indexing throughout: for index sets $\mathcal{I}$ and $\mathcal{J}$,
$A(\mathcal{I},\mathcal{J})$ is the submatrix of $A$ on rows $\mathcal{I}$ and
columns $\mathcal{J}$, and a colon selects all rows or columns (e.g.,
$A(\mathcal{I},:)$).

\paragraph{Running example.}
Before the formal definitions, we introduce a running example
used throughout the paper: a rank-$12$ HSS matrix of size
$96\times96$. \Cref{fig:hss-tree-diagram} partitions the index set
$[1,2,\ldots,96]$ using a binary tree, and \Cref{f:hss-structure} shows
the corresponding block partitioning and low-rank structure. The same
example is used for the schematic overview in \Cref{fig:actions-on-A} and the
worked steps in \Cref{s:fixed-rank}.

\begin{figure}
    \centering
    \begin{tikzpicture}[
        level distance=1.3cm,
        level 1/.style={sibling distance=5.2cm},
        level 2/.style={sibling distance=2.5cm},
        edge from parent/.style={draw, -latex, thin, black},
        block/.style={draw, rectangle, rounded corners=3pt, align=center,
          fill=white, minimum width=1.8cm, minimum height=0.6cm,
          font=\fontsize{6.4}{7}\selectfont},
        leaf/.style={draw, rectangle, rounded corners=3pt, align=center,
          fill=white, minimum width=1.8cm, minimum height=0.6cm,
          font=\fontsize{6.4}{7}\selectfont},
        level label/.style={font=\bfseries\footnotesize, align=left,
          text width=2.0cm}
    ]
    \node[block] (root) {$\mathcal{I}_1 = \lbrack 1, \dots, 96 \rbrack$}
        child {
            node[block] (n1) {$\mathcal{I}_2 = \lbrack 1, \dots, 48 \rbrack$}
            child { node[leaf] (l1) {$\mathcal{I}_4 = \lbrack 1, \dots, 24 \rbrack$} }
            child { node[leaf] (l2) {$\mathcal{I}_5 = \lbrack 25, \dots, 48 \rbrack$} }
        }
        child {
            node[block] (n2) {$\mathcal{I}_3 = \lbrack 49, \dots, 96 \rbrack$}
            child { node[leaf] (l3) {$\mathcal{I}_6 = \lbrack 49, \dots, 72 \rbrack$} }
            child { node[leaf] (l4) {$\mathcal{I}_7 = \lbrack 73, \dots, 96 \rbrack$} }
        };

    \node[level label, xshift=-5.5cm] at (root) {Level 0};
    \node[level label, xshift=-5.5cm] at (root |- n1) {Level 1};
    \node[level label, xshift=-5.5cm] at (root |- l1) {Level 2};

    \end{tikzpicture}
    \caption{A contiguous partitioning of the index set $\lbrack 1, \dots, 96 \rbrack$ using a perfectly balanced binary tree.
    This tree defines the running example used throughout the paper.}
    \label{fig:hss-tree-diagram}
\end{figure}

\begin{figure}
    \centering
    \begin{tikzpicture}[
        scale=0.6,
        dimline/.style={draw, thin, black, |-|},
        dimlabel/.style={font=\fontsize{9}{10}\selectfont, color=black}
    ]
        \definecolor{diagcolor}{RGB}{110, 130, 180}
        \definecolor{offdiagcolor}{RGB}{200, 220, 240}
        \definecolor{orangecolor}{RGB}{230, 140, 100}
        \def\gap{0.04}

        \begin{scope}[shift={(1,0)}]
            \foreach \x in {0, 1, 2, 3} {
                \foreach \y in {0, 1, 2, 3} {
                     \ifnum\x=\y\else
                        \fill[offdiagcolor] (\x+\gap, 3-\y+\gap) rectangle (\x+1-\gap, 4-\y-\gap);
                     \fi
                }
            }
            \foreach \k in {0, 1, 2, 3} {
                \fill[diagcolor] (\k+\gap, 3-\k+\gap) rectangle (\k+1-\gap, 4-\k-\gap);
            }
            \draw[orangecolor, ultra thick] (1+\gap, 3+\gap) rectangle (4-\gap, 4-\gap);
            \draw[orangecolor, ultra thick] (0+\gap, 0+\gap) rectangle (1-\gap, 3-\gap);
            \draw[dimline] (0, 4.2) -- (1, 4.2) node[midway, above, dimlabel] {$\mathcal{I}_4$};
            \draw[dimline] (1, 4.2) -- (2, 4.2) node[midway, above, dimlabel] {$\mathcal{I}_5$};
            \draw[dimline] (2, 4.2) -- (3, 4.2) node[midway, above, dimlabel] {$\mathcal{I}_6$};
            \draw[dimline] (3, 4.2) -- (4, 4.2) node[midway, above, dimlabel] {$\mathcal{I}_7$};
            \draw[dimline] (-0.2, 4) -- (-0.2, 3) node[midway, left, dimlabel] {$\mathcal{I}_4$};
            \draw[dimline] (-0.2, 3) -- (-0.2, 2) node[midway, left, dimlabel] {$\mathcal{I}_5$};
            \draw[dimline] (-0.2, 2) -- (-0.2, 1) node[midway, left, dimlabel] {$\mathcal{I}_6$};
            \draw[dimline] (-0.2, 1) -- (-0.2, 0) node[midway, left, dimlabel] {$\mathcal{I}_7$};
        \end{scope}

        \begin{scope}[shift={(2.5,0)}]
            \begin{scope}[shift={(4.7, 2.0)}]
                \draw[orangecolor, ultra thick] (0, 1.6) rectangle (0.6, 2.2);
                \fill[offdiagcolor] (0, 0.8) rectangle (0.6, 1.4);
                \draw[decorate, decoration={brace, amplitude=3pt}, thick]
                    (0.8, 2.2) -- (0.8, 0.8)
                    node[midway, right=3pt, font=\fontsize{9}{10}\selectfont] {Low rank (rank=$k$)};
                \fill[diagcolor] (0, 0) rectangle (0.6, 0.6);
                \node[right, font=\fontsize{9}{10}\selectfont] at (0.7, 0.3) {Full rank};
            \end{scope}
        \end{scope}
    \end{tikzpicture}
    \caption{
    A rank-$k$ HSS matrix of size $96\times96$ with $k=12$ and leaf size
    $2k=24$. The block partitioning corresponds to the leaf level of the
    tree in \Cref{fig:hss-tree-diagram}. In this example, all off-diagonal
    blocks (light blue) have rank
    exactly $k$, and so do all block rows $A_{ii^c}\equiv A(\mathcal I_i,\mathcal I_i^c)$
    and block columns $A_{i^ci}\equiv A(\mathcal I_i^c,\mathcal I_i)$, where
    $\mathcal I_i^c$ is the complement of $\mathcal I_i$ (e.g., $A_{4\,4^c}$
    and $A_{4^c\,4}$ in orange); see \Cref{eq:uv}.
    This running example illustrates the algorithm in \Cref{s:fixed-rank}.}
    \label{f:hss-structure}
\end{figure}

\begin{definition}[HSS Tree]\label{hss-tree-definition}
Consider the index set $\mathcal{I} = [1, 2, \dots, N]$ (row
and column indices of an $N$-by-$N$ matrix). We construct a perfectly balanced binary tree
$\mathcal{T}$ with depth $L = \mathcal{O}(\log_2(N))$.
Every node $i \in \mathcal{T}$ is
assigned a contiguous range of indices $\mathcal{I}_{i} \subseteq \mathcal{I}$ as follows:
\begin{enumerate}
    \item \textbf{Root node (level 0):} The root (node 1) contains the
    full index set $\mathcal{I}_1 = \mathcal{I}$.

    \item \textbf{Parent-child relationship:}
    For every node $i$ at level $0 \le \ell \le L-1$,  its index set $\mathcal{I}_{i}$ is partitioned evenly
    and contiguously into two disjoint subsets for its two children $\alpha = 2i$ and $\beta=2i+1$ at
    level $\ell+1$, which satisfy
    \[
    \mathcal{I}_{i} = \mathcal{I}_{\alpha} \cup \mathcal{I}_{\beta},
    \quad
    \text{and}
    \quad
    \mathcal{I}_{\alpha} \cap \mathcal{I}_{\beta} = \emptyset.
    \]

    \item \textbf{Leaf nodes (level $L$):} their
    index sets are disjoint, and their union is the full set $\mathcal{I}$.
\end{enumerate}
\end{definition}

The HSS matrix definition that follows describes the off-diagonal structure
through the complement of a node's index set. For a node $i$, let
$\mathcal I_i^c \equiv \mathcal I\setminus\mathcal I_i$. Write
$A_{ii}\equiv A(\mathcal I_i,\mathcal I_i)$ for its diagonal block, and
$A_{ii^c}\equiv A(\mathcal I_i,\mathcal I_i^c)$ and
$A_{i^ci}\equiv A(\mathcal I_i^c,\mathcal I_i)$ for the block row and block
column of $A$ outside it.

\begin{definition}[rank-$k$ HSS Matrix]\label{hss-matrix-definition}
A matrix $A \in \mathbb{C}^{N \times N}$ is a rank-$k$ Hierarchically
Semi-Separable (HSS) matrix with respect to an HSS
tree $\mathcal{T}$ from \crefdef{hss-tree-definition} if the following
two conditions hold:

\begin{enumerate}
    \item \textbf{Leaf level:}
    For each pair of distinct leaf nodes $i \ne j$, the off-diagonal block
    \[
    	A_{ij} \equiv A(\mathcal{I}_i, \mathcal{I}_j)
	\]
    admits a low-rank factorization
    \begin{equation}\label{eq:uv}
    \begin{tikzpicture}[baseline=(Aij.base)]
      \node[anchor=base] (Aij)  at (0.00,0) {$A_{ij}$};
      \node[anchor=base] (eq)   at (1.30,0) {$=$};
      \node[anchor=base] (Ui)   at (2.20,0) {$\tilde{U}_i$};
      \node[anchor=base] (Atij) at (3.10,0) {$B_{ij}$};
      \node[anchor=base] (Vj)   at (4.20,0) {$\left( \tilde{V}_{j} \right)^*$};

      \draw[thick] (-0.60,-0.35) rectangle (0.60,-1.55);   %
      \draw[thick] ( 2.00,-0.35) rectangle (2.40,-1.55);   %
      \draw[thick] ( 2.90,-0.35) rectangle (3.30,-0.75);   %
      \draw[thick] ( 3.60,-0.35) rectangle (4.80,-0.75);   %

      \node[font=\scriptsize, anchor=north] at (0.00,-1.62) {$|\mathcal{I}_i| \times |\mathcal{I}_j|$};
      \node[font=\scriptsize, anchor=north] at (2.20,-1.62) {$|\mathcal{I}_i| \times k$};
      \node[font=\scriptsize, anchor=north] at (3.10,-0.82) {$k \times k$};
      \node[font=\scriptsize, anchor=north] at (4.20,-0.82) {$k \times |\mathcal{I}_j|$};
    \end{tikzpicture}
    \end{equation}
    where the basis matrices $\tilde{U}_i\in\mathbb{C}^{|\mathcal I_i|\times k}$
    and $\tilde{V}_j\in\mathbb{C}^{|\mathcal I_j|\times k}$ have orthonormal
    columns, and the coupling
    matrix $B_{ij} \in \mathbb{C}^{k \times k}$ has smaller dimensions
    than $A_{ij}$.

    \item \textbf{Other levels:}
    For each level $\ell = L-1, \dots, 1$ and each pair of distinct
    nodes $i \ne j$ with children $(\alpha, \beta)$ and
    $(\alpha', \beta')$, define the aggregate block
    \begin{equation} \label{eq:aij_other}
    A_{ij} \equiv \begin{bmatrix}
        B_{\alpha\alpha'} & B_{\alpha\beta'} \\
        B_{\beta\alpha'} & B_{\beta\beta'}
    \end{bmatrix}.
    \end{equation}
    This matrix admits a rank-$k$ factorization
    \begin{equation} \label{eq:uv_other}
	    A_{ij} = \tilde{U}_i B_{ij} \left( \tilde{V}_{j} \right)^*
    \end{equation}
    with transfer matrices $\tilde{U}_i,\tilde{V}_j\in\mathbb{C}^{2k\times k}$
    and coupling matrix $B_{ij} \in \mathbb{C}^{k \times k}$.
    \end{enumerate}
\end{definition}

Thus, $k$ is an upper bound on the HSS ranks. Individual block rows and
columns may have smaller ranks.

\begin{remark} \label{rmk:rank}
At an internal level, $A_{ij}$ in \Cref{eq:aij_other} is a
$2k\times2k$ aggregate of coupling matrices from its children, rather than the
physical block $A(\mathcal I_i,\mathcal I_j)$. Multiplying the transfer
matrices down to the leaf bases gives a rank-$k$ factorization of the
physical block $A(\mathcal I_i,\mathcal I_j)$ in the original coordinates. This is the nested-basis property of the HSS
format~\cite{chandrasekaran2006afast,xia2010fast}.
\end{remark}

\subsection{Randomized Range Finding and Frobenius-Norm Estimation}
\label{ss:rand-range-finder}

Assume that a target rank $k > 0$ is known \emph{a priori}. For a
given matrix $A \in \mathbb{C}^{m \times n}$, the fixed-rank randomized
range finder~\cite[Algorithm~4.1]{halko2011lrobust} constructs a matrix
$Q \in \mathbb{C}^{m \times k}$ with orthonormal columns that
approximately span the range of $A$. It draws a standard Gaussian matrix
$\Omega\in\mathbb C^{n\times k}$, forms $Y=A\Omega$, and computes
$Q=\texttt{orth}(Y)$, an orthonormal basis of the range of $Y$ obtained
from a thin QR factorization. Throughout, $\texttt{qr}$ denotes a full QR
factorization. When $A$ is real, these quantities may all be
real. In practice, one commonly samples $k+p$ columns, where the small
oversampling parameter $p$ improves accuracy and reliability at little
additional cost~\cite{tropp2017practical}.

When the rank is unknown, an adaptive construction also needs an
inexpensive stopping test. Let $Q$ be the current basis and
$E\equiv(I-QQ^*)A$ the residual operator. For a block of $b$ Gaussian
probes, draw a standard Gaussian matrix $Z\in\mathbb C^{n\times b}$ and
set $\Omega=Z/\sqrt b$. The standard Gaussian trace identity
\cite{gratton2018improved} gives
\begin{equation}\label{eq:f-norm-estimate}
  \mathbb E\!\left[\|X\Omega\|_{\mathrm F}^2\right]
  =\|X\|_{\mathrm F}^2,
  \qquad X\in\mathbb C^{m\times n},
\end{equation}
so $\|E\Omega\|_{\mathrm F}^2$ is an unbiased estimator of
$\|E\|_{\mathrm F}^2$. This is stochastic trace estimation applied to
$X^*X$. Unbiasedness uses only that $\Omega$ is isotropic,
$\mathbb E[\Omega\Omega^*]=I$, and independent of
$X$~\cite{martinsson2020randomized}; Gaussianity is needed for the
concentration bounds below. Hutchinson's original estimator instead uses Rademacher
probes~\cite{hutchinson1990astochastic}.
Concentration bounds for the Gaussian estimator~\cite{gratton2018improved}
control the deviation of a realization from the true norm. This estimator
underlies adaptive, adjoint-free range
finders~\cite[Algorithm~13]{martinsson2020randomized} and our earlier block
algorithm~\cite{smith2026adaptive}. The adaptive HSS construction of
\Cref{s:adaptive-rank} applies this estimator locally, at every node and
every level, to decide when enough basis vectors have been accumulated. Each
one-sided basis construction is adjoint-free: it applies the operator
but never its adjoint. The full factorization applies this construction separately to $A$
and $A^\top$ in real arithmetic, or to $A$ and $A^*$ in complex arithmetic.
\section{The Compression and Factorization Framework} \label{s:fixed-rank}

This section develops the construction in full: block nullification to
isolate the off-diagonal samples, compression of those samples by unitary
changes of basis, recovery and elimination of the redundant diagonal blocks,
and recursion on the resulting Schur complement. The last two steps turn a
compression into a factorization. We present the construction under the assumption that the HSS rank is known, a convention shared with~\cite{levitt2024linear,yesypenko2026randomized}, which keeps the dimensions uniform and the derivation readable. This assumption is made for presentation only,
not as a proposed mode of use. Real applications rarely supply the ranks,
and for the two-dimensional problems of \Cref{s:result}, the ranks grow with
$N$, so no single $k$ would serve. \Cref{s:adaptive-rank} removes the
assumption, and everything in this section carries over unchanged.

For clarity, the analysis in this section is stated for a real rank-$k$ HSS
matrix $A \in \mathbb{R}^{N \times N}$ accessible only through the maps $x \mapsto Ax$ and $x \mapsto A^\top x$;
the symmetric cases that need only one of the two maps are described below.
To keep the dimensions uniform, we also assume that every off-diagonal block
row and block column has algebraic rank exactly $k$. If a local rank is
smaller, the construction simply retains unused basis directions. Finally,
we assume that every redundant diagonal block $X_i^{rr}$, defined
in \Cref{ss:x_rr_rs_sr}, that the construction selects is nonsingular. This is the usual local-pivot condition.
If it fails, local pivoting or a rank-revealing permutation is required
before elimination.
We present
an algorithm that constructs a factorization of $A$ in a compressed format,
which can be
used, e.g., to solve a linear system $Ax = f$ in $\mathcal{O}(N)$ complexity.

As the first step, we construct an $L$-level HSS tree $\mathcal{T}$
according to \crefdef{hss-tree-definition}. For ease of presentation, we choose
$L$ so that every leaf node has size $N/2^L = 2k$. This convention is not
part of the HSS definition. The method extends to leaves of different sizes
with only notational changes.
Then,
we draw two independent standard real Gaussian test matrices
$\Omega,\, \Phi \in \mathbb{R}^{N \times 3k}$ and form
\begin{equation} \label{e:y=ao}
Y = A\Omega
\qquad \text{and} \qquad
Z = A^\top\Phi.
\end{equation}
The number of columns, $3k$, exceeds the leaf node size $2k$. This excess
is the key to block nullification, the first step of the construction. The
remainder of this section shows how the transformations in
\Cref{fig:actions-on-A} are carried out using only the four matrices $Y$,
$\Omega$, $Z$, and $\Phi$. No further matvecs with $A$ or $A^\top$ are
needed. We compress the off-diagonal blocks
(\Cref{ss:form-u}), decouple the skeleton and redundant indices
within each node (\Cref{ss:x_rr_rs_sr}), and recurse on the
resulting Schur complement up the tree (\Cref{ss:p}). Finally,
\Cref{ss:linear-solve-matvec} describes the factorized solve and the
complexity of the complete algorithm.

When $A$ is real symmetric, the second sketch is redundant. Choosing
$\Phi=\Omega$ gives $Z=A^\top\Omega=Y$, so no separate matvecs with $A^\top$ are needed, and every step in this section may take $V_i\equiv U_i$.
This halves the matvec cost (\Cref{th:complexity-fixed}). For a general complex matrix,
the corresponding construction uses $Z=A^*\Phi$ and two independent
standard circular complex Gaussian test matrices $\Omega$ and $\Phi$. If the matrix is complex
symmetric, $A^\top=A$, access to $A^*$ is again unnecessary: choose
$\Phi=\overline{\Omega}$, so that
\[
  Z=A^*\overline{\Omega}=\overline{A\Omega}=\overline{Y},
  \qquad V_i\equiv\overline{U_i}.
\]
The transformed diagonal block is then
$X_i=U_i^*A_{ii}\overline{U_i}=P_i^\top A_{ii}P_i$ with
$P_i=\overline{U_i}$, and is again complex symmetric. The
Lippmann--Schwinger operator of \Cref{app:lippmann} is complex symmetric. We state
the algorithms for real matrices with both sketches and note these savings
where they apply.

Circular complex sampling is essential to the unitary-invariance argument
used in \creflem{lemma:tilde} and throughout the paper. Real Gaussian
sampling is not unitarily invariant over $\mathbb C$.

\begin{remark}
Matrices that arise from, e.g., the discretization of integral equations or the Gaussian
elimination of discretized elliptic operators are typically not exact HSS matrices, but they can be approximated effectively by HSS matrices~\cite{martinsson2005fast, xia2010superfast}. The algorithms of this paper are designed to be applied to such matrices and are robust on them, as \Cref{s:result} shows.
\end{remark}

\subsection{Compress Off-Diagonal Blocks}\label{ss:form-u}

The first step constructs the sparse matrix $\tilde{A}=U^\top AV$, where
$U$ and $V$ are block diagonal with orthogonal diagonal blocks; see the
second panel of \Cref{fig:actions-on-A}. The low-rank off-diagonal
structure of $A$ becomes sparsity in these bases. We first describe this
sparsity and then explain how to compute $U$ and $V$.

\subsubsection{Sparsity of $\tilde{A}$ via Orthogonal Transformation}

Consider the block partitioning of $A$ corresponding to the leaf level of
the HSS tree $\mathcal{T}$; see the running example in
\Cref{f:hss-structure}. By the rank assumption of this section, the
off-diagonal block row $A_{ii^c}\in\mathbb{R}^{2k\times(N-2k)}$ has rank
$k$ for every leaf
node $i$. Suppose we have an orthogonal matrix $U_i\in\mathbb{R}^{2k\times2k}$
whose first $k$ columns span the column space of $A_{ii^c}$. Analogously, suppose we have an orthogonal matrix
$V_i\in\mathbb{R}^{2k\times2k}$ whose first $k$ columns span the row
space of $A_{i^c i}$. Define
\[
U \equiv \diag(U_i: \text{$i$ is a leaf node})
\quad\text{and}\quad
V \equiv \diag(V_i: \text{$i$ is a leaf node}).
\]
Throughout the paper, $\diag(M_i:i\in\mathcal S)$ denotes the block-diagonal
concatenation of the matrices indexed by $\mathcal S$.
In the new bases, \Cref{e:y=ao} becomes
\begin{equation}\label{e:compression}
\underbrace{U^\top Y}_{\displaystyle \tilde{Y}}
= \underbrace{U^\top A V}_{\displaystyle \tilde{A}}
\underbrace{V^\top \Omega}_{\displaystyle \tilde{\Omega}},
\qquad
\text{and}
\qquad
\underbrace{V^\top Z}_{\displaystyle \tilde{Z}}
= \underbrace{V^\top A^\top U}_{\displaystyle \tilde{A}^\top}
\underbrace{U^\top \Phi}_{\displaystyle \tilde{\Phi}},
\end{equation}
where, in the new bases, every off-diagonal block $A_{ij}$ between two distinct leaf nodes $i \not= j$ becomes
\begin{equation} \label{eq:sparse}
U_i^\top A_{ij} V_j
=
\begin{array}{c@{\;}c}
\left[\begin{array}{cc} B_{ij} & 0 \\[4pt] 0 & 0 \end{array}\right]
&
\begin{array}{@{}l@{}} \scriptstyle k \\[10pt] \scriptstyle k \end{array}
\\[4pt]
\begin{array}{@{}cc@{}} \scriptstyle k\hspace{9pt} & \scriptstyle \hspace{9pt}k \end{array}
&
\end{array}
\qquad (i \not= j),
\end{equation}
where we call the first $k$ row and column indices the \emph{skeleton} ($s$) indices and the
remaining $k$ the \emph{redundant} ($r$) indices. Here $B_{ij}$ is the
$k \times k$ skeleton--skeleton block.
Therefore, the matrix $\tilde{A}$ is much sparser than the original $A$,
as the second panel of \Cref{fig:actions-on-A} illustrates.

The transformation in \Cref{eq:sparse} is conceptual because the blocks
of $A$ are not available explicitly. Instead, we compute the four sample
matrices $\tilde{Y}$, $\tilde{\Omega}$, $\tilde{Z}$, and $\tilde{\Phi}$.
In particular, we partition the four tall-and-skinny matrices $Y$, $\Omega$, $Z$, and $\Phi$ (and their tilde counterparts) into block rows according
to the leaf level of the HSS tree $\mathcal{T}$ and define
\[
Y_i \equiv Y(\mathcal{I}_i,:), \quad
\Omega_i \equiv \Omega(\mathcal{I}_i,:), \quad
Z_i \equiv Z(\mathcal{I}_i,:), \quad
\Phi_i \equiv \Phi(\mathcal{I}_i,:),
\]
for every leaf node $i$. The first equality in
\Cref{f:diag-off_diag-A} illustrates this partition. We then compute the
following $2k\times3k$ blocks:
\begin{equation} \label{eq:yz_tilde}
\boxed{\,
\tilde{Y}_i = U_i^\top Y_i, \quad
\tilde{\Omega}_i = V_i^\top \Omega_i, \quad
\tilde{Z}_i = V_i^\top Z_i, \quad
\tilde{\Phi}_i = U_i^\top \Phi_i.
\,}
\end{equation}
The transformed test matrices remain Gaussian at the leaf level, as
\creflem{lemma:tilde} makes precise.
\begin{lemma} \label{lemma:tilde}
The matrices $\tilde{\Omega}$ and $\tilde{\Phi}$ are standard real
Gaussian matrices. For the complex extension, the same conclusion holds
with standard circular complex Gaussian matrices.
\end{lemma}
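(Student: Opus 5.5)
The plan is to reduce the statement to the orthogonal (respectively unitary) invariance of the standard Gaussian distribution, applied column by column. The one fact we must track is that $U$ and $V$ depend on the sketches only through $Y$ and $Z$, never on $\Omega$ in a way that breaks independence. We therefore condition on the bases and use their independence from the test matrices they multiply.

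\textbf{Step 1 (what the bases depend on).} In this section, $U_i$ and $V_i$ are fixed orthogonal matrices determined by $A$: the first $k$ columns of $U_i$ span the column space of $A_{ii^c}$, and those of $V_i$ span the row space of $A_{i^ci}$. Even when they are computed from samples, the plan is to note that $V$, which multiplies $\Omega$, is built from $Z=A^\top\Phi$ and block nullification of $\Phi$. Symmetrically, $U$, which multiplies $\Phi$, is built from $Y$ and $\Omega$. Hence $V$ is independent of $\Omega$ and $U$ is independent of $\Phi$. If the paper's convention instead has $U_i$ and $V_i$ depending only on $A$, this independence is immediate.

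\textbf{Step 2 (conditioning and invariance).} Condition on $V$. Then $\tilde\Omega = V^\top\Omega$ with $V=\diag(V_i)$ orthogonal and deterministic, and $\Omega$ still standard Gaussian. Each column $\omega$ of $\Omega$ is $\mathcal N(0,I_N)$. Because $V^\top$ is orthogonal, $V^\top\omega$ is Gaussian with mean $0$ and covariance $V^\top V=I_N$, so it is again $\mathcal N(0,I_N)$. The columns of $\Omega$ are independent, and applying a fixed linear map to each preserves their independence, so $\tilde\Omega$ has i.i.d.\ $\mathcal N(0,1)$ entries. The conditional law does not depend on $V$, so the unconditional law is the same, and $\tilde\Omega$ is moreover independent of $V$. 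The same argument with $U$ gives the result for $\tilde\Phi=U^\top\Phi$. Block-diagonality is not even needed here; it only shows that each $\tilde\Omega_i=V_i^\top\Omega_i$ is computed locally.

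\textbf{Step 3 (complex case).} Here $\omega$ is a standard circular complex Gaussian vector, which is characterized by $\mathbb E[\omega\omega^*]=I$ and $\mathbb E[\omega\omega^\top]=0$, together with Gaussianity of its real and imaginary parts. For unitary $V$, the vector $V^*\omega$ is jointly Gaussian with $\mathbb E[V^*\omega\omega^*V]=V^*V=I$ and $\mathbb E[V^*\omega\omega^\top\overline V]=0$. It is therefore again standard circular. This is also the point where real Gaussian sampling would fail: for a real $\omega$, the pseudo-covariance of $V^*\omega$ is $V^*\overline V$, which is generally not $0$ and not $I$ for complex unitary $V$.

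\textbf{Main obstacle.} The only delicate point is Step 1, namely making sure the basis multiplying a test matrix is independent of that test matrix. If the paper's $V_i$ were built from $Y$, and hence from $\Omega$, the lemma would be false as a distributional statement. I would first try to settle this by pointing to the construction of $V_i$ from $Z$ and $\Phi$ alone. Failing that, I would state the lemma with $U$ and $V$ taken as the exact $A$-determined bases.
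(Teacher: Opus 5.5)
Your proposal is correct and follows the paper's proof. In both, $V$ is a function of $(Z,\Phi)$ and hence independent of $\Omega$ (likewise $U$ of $\Phi$), and conditioning on the basis together with orthogonal or unitary invariance of the standard (circular) Gaussian law gives the result; your covariance and pseudo-covariance computations spell out the paper's one-line invariance step. The fallback to $A$-determined bases is unnecessary, since $U_i$ and $V_i$ are in fact computed from the samples and your main argument already covers that case.
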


\begin{proof}
The basis $V$ is constructed from $(Z,\Phi)$ and is therefore independent
of $\Omega$. Similarly, $U$ is independent of $\Phi$. Conditional on either
orthogonal basis, orthogonal invariance of the standard real Gaussian
distribution implies that $V^\top\Omega$ and $U^\top\Phi$ are again
standard real Gaussian matrices. For complex matrices, replace transposes
by conjugate transposes, orthogonal matrices by unitary matrices, and the
test matrices by independent standard circular complex Gaussian matrices.
Unitary invariance then gives the corresponding result.
\qed
\end{proof}

\paragraph{Running example (continued).}
For the running example (\Cref{f:hss-structure}), the compressed system
$\tilde{Y} = \tilde{A}\tilde{\Omega}$ from \Cref{e:compression} takes the form
\Cref{e:uty-example},
where the diagonal block of $\tilde{A}$ at node $i$ is defined as
$X_i \equiv U_i^\top A_{ii} V_i$. Superscripts select the corresponding
sub-blocks. For example, $X_i^{rs}$ is the redundant-row, skeleton-column
part of $X_i$, and $\tilde{Y}_i$ and $\tilde{\Omega}_i$ split into $s$ and
$r$ rows in the same way.
The transposed system
$\tilde{Z}=\tilde{A}^\top\tilde{\Phi}$ has the same sparsity structure, with
$\tilde{Z}$ and $\tilde{\Phi}$ partitioned analogously.
\begin{equation} \label{e:uty-example}
\left[\begin{array}{c}
\tilde{Y}_4^{s} \\
\tilde{Y}_4^{r} \\
\hline
\tilde{Y}_5^{s} \\
\tilde{Y}_5^{r} \\
\hline
\tilde{Y}_6^{s} \\
\tilde{Y}_6^{r} \\
\hline
\tilde{Y}_7^{s} \\
\tilde{Y}_7^{r} \\
\end{array}\right]
=
\left[
\begin{array}{cc|cc|cc|cc}
X_4^{ss} & X_4^{sr} & B_{45} & 0 & B_{46} & 0 & B_{47} & 0 \\
X_4^{rs} & X_4^{rr} & 0 & 0 & 0 & 0 & 0 & 0 \\
\hline
B_{54} & 0 & X_5^{ss} & X_5^{sr} & B_{56} & 0 & B_{57} & 0 \\
0 & 0 & X_5^{rs} & X_5^{rr} & 0 & 0 & 0 & 0 \\
\hline
B_{64} & 0 & B_{65} & 0 & X_6^{ss} & X_6^{sr} & B_{67} & 0 \\
0 & 0 & 0 & 0 & X_6^{rs} & X_6^{rr} & 0 & 0 \\
\hline
B_{74} & 0 & B_{75} & 0 & B_{76} & 0 & X_7^{ss} & X_7^{sr} \\
0 & 0 & 0 & 0 & 0 & 0 & X_7^{rs} & X_7^{rr}
\end{array}
\right]
\left[\begin{array}{c}
\tilde{\Omega}_4^{s} \\
\tilde{\Omega}_4^{r} \\
\hline
\tilde{\Omega}_5^{s} \\
\tilde{\Omega}_5^{r} \\
\hline
\tilde{\Omega}_6^{s} \\
\tilde{\Omega}_6^{r} \\
\hline
\tilde{\Omega}_7^{s} \\
\tilde{\Omega}_7^{r} \\
\end{array}
\right]
.
\end{equation}
The explicit zeros show the sparsity created by the basis change. The
matrix $\tilde{A}$ is displayed only for illustration and is not formed by
the algorithm.

\subsubsection{Computation of $U_i$ and $V_i$}

Block nullification~\cite{levitt2024linear} supplies the off-diagonal
samples from which we compute $U_i$ and $V_i$ for the known rank $k$;
\Cref{s:adaptive-rank} determines $k$ adaptively instead. We now show how to
compute $U_i$
from $\Omega_i$ and $Y_i$. The construction of $V_i$ from $\Phi_i$ and $Z_i$
is analogous.

\vspace{1em}
\noindent\textbf{Step 1: construct nullspace basis $Q_i$.}
Consider the leaf-level block partitioning of $A$ and the decomposition
$A=\diag(A)+\offdiag(A)$, where $\diag(A)$ collects the diagonal blocks
$A_{ii}\equiv A(\mathcal I_i,\mathcal I_i)$ and
$\offdiag(A)\equiv A-\diag(A)$. The block row of
$Y=A\Omega=\diag(A)\Omega+\offdiag(A)\Omega$ restricted to leaf node $i$
then reads

    \begin{equation} \label{e:leaf-decomp}
    \begin{tikzpicture}[baseline=(Yi.base)]
      \node[anchor=base] (Yi)   at (0.00,0) {$Y_i$};
      \node[anchor=base] (eq)   at (0.80,0) {$=$};
      \node[anchor=base] (Aii)  at (1.50,0) {$A_{ii}$};
      \node[anchor=base] (mul1) at (2.20,0) {$\times$};
      \node[anchor=base] (Omi)  at (2.90,0) {$\Omega_i$};
      \node[anchor=base] (plus) at (3.70,0) {$+$};
      \node[anchor=base] (Aic)  at (4.70,0) {$A_{ii^c}$};
      \node[anchor=base] (mul2) at (5.90,0) {$\times$};
      \node[anchor=base] (Omic) at (6.60,0) {$\Omega_{i^c}.$};

      \draw[thick] (-0.30,-0.35) rectangle (0.30,-0.80);   %
      \draw[thick] ( 1.28,-0.35) rectangle (1.72,-0.80);   %
      \draw[thick] ( 2.60,-0.35) rectangle (3.20,-0.80);   %
      \draw[thick] ( 4.10,-0.35) rectangle (5.30,-0.80);   %
      \draw[thick] ( 6.30,-0.35) rectangle (6.90,-1.55);   %

      \node[font=\scriptsize, anchor=north] at (0.00,-0.86) {$2k \times 3k$};
      \node[font=\scriptsize, anchor=north] at (1.50,-0.86) {$2k \times 2k$};
      \node[font=\scriptsize, anchor=north] at (2.90,-0.86) {$2k \times 3k$};
      \node[font=\scriptsize, anchor=north] at (4.70,-0.86) {$2k \times (N-2k)$};
      \node[font=\scriptsize, anchor=west]  at (7.05,-0.95) {$(N-2k) \times 3k$};
    \end{tikzpicture}
    \end{equation}

\begin{figure}
\centering
\begin{tikzpicture}[
    scale=0.46,
    cellline/.style={draw=black, thin},
    hl/.style={draw=orangecolor, ultra thick},
    oplabel/.style={font=\fontsize{11}{12}\selectfont},
    blabel/.style={font=\fontsize{9}{10}\selectfont\bfseries},
    slabel/.style={font=\fontsize{9}{10}\selectfont},
    annot/.style={font=\fontsize{9}{10}\selectfont, color=annotcolor}
]
    \definecolor{diagcolor}{RGB}{110, 130, 180}
    \definecolor{offdiagcolor}{RGB}{200, 220, 240}
    \definecolor{orangecolor}{RGB}{128, 0, 32} %
    \definecolor{datacolor}{RGB}{228, 228, 228}
    \definecolor{annotcolor}{RGB}{60, 120, 220}

    \begin{scope}[shift={(0,0)}]
        \foreach \r in {0,1,2,3} {\filldraw[cellline, fill=datacolor] (0,3-\r) rectangle (1,4-\r);}
        \draw[hl] (0,2) rectangle (1,3);
        \node[slabel] at (0.5,3.5) {$Y_4$};
        \node[slabel] at (0.5,2.5) {$Y_5$};
        \node[slabel] at (0.5,1.5) {$Y_6$};
        \node[slabel] at (0.5,0.5) {$Y_7$};
        \node[blabel] at (0.5,-0.6) {$Y$};
    \end{scope}
    \node[oplabel] at (1.7,2) {$=$};
    \begin{scope}[shift={(2.4,0)}]
        \foreach \c in {0,1,2,3} {\foreach \r in {0,1,2,3} {
            \ifnum\c=\r \filldraw[cellline, fill=diagcolor] (\c,3-\r) rectangle (\c+1,4-\r);
            \else \filldraw[cellline, fill=offdiagcolor] (\c,3-\r) rectangle (\c+1,4-\r);\fi}}
        \draw[hl] (0,2) rectangle (4,3);
        \node[slabel] at (0.5,2.5) {$A_{54}$};
        \node[slabel] at (1.5,2.5) {$A_{55}$};
        \node[slabel] at (2.5,2.5) {$A_{56}$};
        \node[slabel] at (3.5,2.5) {$A_{57}$};
        \node[blabel] at (2,-0.6) {$A$};
    \end{scope}
    \begin{scope}[shift={(7.0,0)}]
        \foreach \r in {0,1,2,3} {\filldraw[cellline, fill=datacolor] (0,3-\r) rectangle (1,4-\r);}
        \node[slabel] at (0.5,3.5) {$\Omega_4$};
        \node[slabel] at (0.5,2.5) {$\Omega_5$};
        \node[slabel] at (0.5,1.5) {$\Omega_6$};
        \node[slabel] at (0.5,0.5) {$\Omega_7$};
        \draw[hl] (0,0) rectangle (1,4);
        \node[blabel] at (0.5,-0.6) {$\Omega$};
    \end{scope}

    \node[oplabel] at (8.5,2) {$=$};
    \begin{scope}[shift={(9.2,0)}]
        \foreach \c in {0,1,2,3} {\foreach \r in {0,1,2,3} {
            \ifnum\c=\r \filldraw[cellline, fill=diagcolor] (\c,3-\r) rectangle (\c+1,4-\r);
            \else \draw[cellline] (\c,3-\r) rectangle (\c+1,4-\r);\fi}}
        \draw[hl] (1,2) rectangle (2,3);
        \node[slabel] at (1.5,2.5) {$A_{55}$};
        \node[blabel] at (2,-0.6) {$\diag(A)$};
    \end{scope}
    \begin{scope}[shift={(13.7,0)}]
        \foreach \r in {0,1,2,3} {\filldraw[cellline, fill=datacolor] (0,3-\r) rectangle (1,4-\r);}
        \node[slabel] at (0.5,3.5) {$\Omega_4$};
        \node[slabel] at (0.5,2.5) {$\Omega_5$};
        \node[slabel] at (0.5,1.5) {$\Omega_6$};
        \node[slabel] at (0.5,0.5) {$\Omega_7$};
        \node[blabel] at (0.5,-0.6) {$\Omega$};
        \draw[hl] (0,2) rectangle (1,3);
    \end{scope}
    \node[oplabel] at (15.2,2) {$+$};
    \begin{scope}[shift={(15.9,0)}]
        \foreach \c in {0,1,2,3} {\foreach \r in {0,1,2,3} {
            \ifnum\c=\r \draw[cellline] (\c,3-\r) rectangle (\c+1,4-\r);
            \else \filldraw[cellline, fill=offdiagcolor] (\c,3-\r) rectangle (\c+1,4-\r);\fi}}
        \draw[hl] (0,2) rectangle (1,3);
        \draw[hl] (2,2) rectangle (4,3);
        \node[slabel] at (0.5,2.5) {$A_{54}$};
        \node[slabel] at (2.5,2.5) {$A_{56}$};
        \node[slabel] at (3.5,2.5) {$A_{57}$};
        \node[blabel] at (2,-0.6) {$\offdiag(A)$};
    \end{scope}
    \begin{scope}[shift={(20.4,0)}]
    	\foreach \r in {0,1,2,3} {\filldraw[cellline, fill=datacolor] (0,3-\r) rectangle (1,4-\r);}
        \node[slabel] at (0.5,3.5) {$\Omega_4$};
        \node[slabel] at (0.5,2.5) {$\Omega_5$};
        \node[slabel] at (0.5,1.5) {$\Omega_6$};
        \node[slabel] at (0.5,0.5) {$\Omega_7$};
        \node[blabel] at (0.5,-0.6) {$\Omega$};
        \draw[hl] (0,3) rectangle (1,4);
        \draw[hl] (0,0) rectangle (1,2);
        \node[blabel] at (0.5,-0.6) {$\Omega$};
    \end{scope}
\end{tikzpicture}
\caption{
Decomposition of $Y = A\Omega =
\diag(A)\,\Omega \, + \, \offdiag(A)\,\Omega$ for the running example in \Cref{f:hss-structure}. 
For a leaf node, e.g., node 5, we have the decomposition
 $Y_5 = A_{55}\,\Omega_5 + A_{55^c}\,\Omega_{5^c}$.
}
\label{f:diag-off_diag-A}
\end{figure}

\Cref{f:diag-off_diag-A} shows this decomposition for the running
example. In \Cref{e:leaf-decomp},
$\Omega_i \in \mathbb{R}^{2k \times 3k}$ has full row rank $2k$ with
probability one because its entries are independent Gaussian variables
with a continuous distribution. Consequently,
$\dim\ker(\Omega_i)=k$ almost surely, and there is a matrix
$Q_i \in \mathbb{R}^{3k \times k}$ whose columns form an orthonormal
basis for this nullspace. We write
\[
	Q_i = \texttt{null}(\Omega_i, k),
\]
which can be computed by taking the last $k$ columns of the $Q$ factor from a full QR decomposition of the matrix $\Omega_i^\top$.

\vspace{1em}
\noindent\textbf{Step 2: construct low-rank basis.}
From the previous step, we have $\Omega_iQ_i = 0 \in \mathbb{R}^{2k \times k}$. As a result, right-multiplying \Cref{e:leaf-decomp} by $Q_i$ removes the diagonal contribution:
   \[
    Y_i Q_i =
    A_{ii}\,(\Omega_i Q_i)
    + A_{ii^c}\,(\Omega_{i^c} Q_i) = A_{ii^c}\,(\Omega_{i^c} Q_i).
    \]
    Because $Q_i$ depends only on $\Omega_i$, it is independent of
    $\Omega_{i^c}$. Conditional on $Q_i$, the product
    $\Omega_{i^c}Q_i$ is therefore standard Gaussian, because right
    multiplication by a matrix with orthonormal columns preserves the
    distribution. Thus, $Y_i Q_i \in \mathbb{R}^{2k \times k}$ is a
    randomized sketch of $A_{ii^c}$. Since $A_{ii^c}$ has rank $k$ by
    the rank assumption of this section, this sketch captures its column
    space with probability one. See the fixed-rank range-finder analysis
    in~\cite[Section~10]{halko2011lrobust}. Following the construction in
    \Cref{ss:rand-range-finder}, we compute a full QR factorization
    \begin{equation} \label{eq:basis}
      [U_i, \sim] = \texttt{qr}(Y_i Q_i)
    \end{equation}
    and obtain an orthogonal matrix $U_i \in \mathbb{R}^{2k \times 2k}$ whose
    first $k$ columns span the column space of $A_{ii^c}$ with probability one.

We summarize this two-step construction in \Cref{alg:u-function}.

\begin{algorithm}
\caption{\textsc{ComputeBasis}}
\label{alg:u-function}
\begin{algorithmic}[1]
\Require Matrices $\Omega_i,Y_i \in \mathbb{R}^{m \times p}$ and integer (rank) $0 < k \le p - m$
\Statex    \hfill
    \Comment{\gray{$\triangleright$~$\Omega_i$ has at least $k$ more columns than rows.}}
\Ensure Orthogonal matrix $U_i \in \mathbb{R}^{m \times m}$
\medskip
\Function{ComputeBasis}{$\Omega_i,Y_i,k$}
    \State ${Q}_i \gets \texttt{null}(\Omega_i, k)$ \label{line:compute-nullspace}
    \hfill
    \Comment{\gray{$\triangleright$~columns of $Q_i \in \mathbb{R}^{p \times k}$ belong to the nullspace of $\Omega_i$.}}

    \State $[U_i, \sim] \gets \texttt{qr}(Y_i \, {Q}_i)$ \label{line:compute-basis}
    \hfill
    \Comment{\gray{$\triangleright$~(fixed-rank) randomized range finder in \Cref{ss:rand-range-finder}.}}
    \State \Return $U_i$
\EndFunction
\end{algorithmic}
\end{algorithm}

\begin{lemma}\label{lemma:compute-basis-cost}
With the full-QR implementation described in this subsection,
\textnormal{\textsc{ComputeBasis}} applied to
$\Omega_i,Y_i\in\mathbb R^{m\times p}$ and a requested nullity $k$
requires
\[
  \mathcal O(p^2m+mpk+m^2k)\subseteq\mathcal O(p^3)
\]
flops. In particular, if $p=m+k$ and $m=\mathcal O(k)$, its cost is
$\mathcal O(k^3)$.
\end{lemma}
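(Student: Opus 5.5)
The plan is to count the flops of the three operations in \Cref{alg:u-function} one at a time, using standard Householder costs, and then add them. The inputs are $\Omega_i,Y_i\in\mathbb R^{m\times p}$ with $k\le p-m$, so $m<p$ and $k<p$; these inequalities give the final containment in $\mathcal O(p^3)$.

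\textbf{Step 1 (line~\ref{line:compute-nullspace}).} As in this subsection, $\texttt{null}(\Omega_i,k)$ is obtained from a full QR factorization of $\Omega_i^\top\in\mathbb R^{p\times m}$, keeping the last $k$ columns of the orthogonal factor.
\begin{itemize}
\item The Householder reduction of a $p\times m$ matrix costs $\mathcal O(pm^2)$ and produces $m$ reflectors of length at most $p$.
\item Forming the $k$ trailing columns applies these $m$ reflectors to the $p\times k$ matrix $[0;I_k]$, at cost $\mathcal O(mpk)$.
\item If one instead forms the full $p\times p$ orthogonal factor explicitly and then slices it, the cost is $\mathcal O(p^2m)$.
\end{itemize}
To match the stated bound with the ``full-QR implementation'' literally, I would charge this step $\mathcal O(p^2m)$, noting that it dominates $pm^2$ because $m<p$.

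\textbf{Step 2 (line~\ref{line:compute-basis}).} The product $Y_iQ_i$ of an $m\times p$ matrix with a $p\times k$ matrix costs $\mathcal O(mpk)$.

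\textbf{Step 3.} A full QR factorization of the $m\times k$ matrix $Y_iQ_i$, including explicit formation of the $m\times m$ orthogonal factor $U_i$, costs:
\begin{itemize}
\item $\mathcal O(mk^2)$ for the reduction to triangular form;
\item $\mathcal O(m^2k)$ for accumulating $\min(m,k)\le k$ reflectors onto $I_m$.
\end{itemize}
Since $k<p$ and $m<p$, the term $mk^2$ is absorbed into $mpk$.

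\textbf{Combining.} Summing the three steps gives $\mathcal O(p^2m+mpk+m^2k)$. Each term is a product of three factors from $\{m,p,k\}$, each at most $p$, so the total lies in $\mathcal O(p^3)$. For the special case, substitute $p=m+k$ with $m=\mathcal O(k)$: then $p=\mathcal O(k)$ and every term is $\mathcal O(k^3)$.

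The only real subtlety is bookkeeping. The phrase ``full QR'' must be interpreted consistently with the claimed terms: the explicit $p\times p$ orthogonal factor gives the $p^2m$ term, and the explicit $m\times m$ factor $U_i$ gives the $m^2k$ term. The remaining lower-order terms must then be checked to be absorbed using $m,k<p$.
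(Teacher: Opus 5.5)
Your proposal is correct and follows the paper's proof essentially line by line: the same three costs $\mathcal O(p^2m)$, $\mathcal O(mpk)$, and $\mathcal O(m^2k)$ for the nullspace basis, the product $Y_iQ_i$, and the full factor $U_i$, summed and bounded by $\mathcal O(p^3)$ using $m,k\le p$. Your extra remarks are valid refinements that the paper leaves implicit: only the $k$ trailing columns could be formed in $\mathcal O(mpk)$, and the $\mathcal O(mk^2)$ triangularization is absorbed into $mpk$.
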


\begin{proof}
Forming the requested columns of the full $Q$ factor of
$\Omega_i^\top\in\mathbb R^{p\times m}$ costs
$\mathcal O(p^2m)$ flops~\cite[Section~5.2]{golub2013matrix}. Forming $Y_iQ_i$ costs
$\mathcal O(mpk)$, and constructing the full orthogonal factor $U_i$ from
this $m\times k$ matrix costs $\mathcal O(m^2k)$. Since $m,k\leq p$, the
sum is $\mathcal O(p^3)$.
\qed
\end{proof}

\begin{remark}
If $A$ is only approximately rank-$k$ HSS and each off-diagonal block
row $A_{ii^c}$ has rapidly decaying singular values, the first $k$
columns of $U_i$ in \Cref{eq:basis} provide a randomized approximation
to its dominant left singular subspace. Standard probabilistic error
bounds apply, and oversampling can improve the
approximation~\cite{halko2011lrobust}.
\end{remark}
\subsection{Elimination via Redundant Diagonal Blocks}\label{ss:x_rr_rs_sr}

We construct $\hat{A} = R \tilde{A} C$ with two block
diagonal matrices $R$ and $C$ that eliminate all skeleton--redundant couplings (see the third panel in \Cref{fig:actions-on-A}). As before, we first explain the structure of $\hat{A}$ and then discuss how to compute the matrices $R$ and $C$.

\subsubsection{Sparsity of $\hat{A}$ via Gaussian Elimination} \label{ss:ge-rl}
Consider again the block partitioning of $\tilde{A}$ at the leaf level of the HSS tree $\mathcal{T}$ (see the running example in \Cref{e:uty-example}). After the unitary transformation in \Cref{ss:form-u}, the redundant indices of every leaf node are coupled only to indices of the same node and are decoupled from all other leaf nodes. This partial decoupling motivates us to further decouple the redundant and skeleton indices within the same leaf node. Recall the definition $
X_i
=
\begin{bmatrix}
X_{i}^{ss} & X_{i}^{sr} \\
X_{i}^{rs} & X_{i}^{rr}
\end{bmatrix}
$
and that we have $k$ redundant and $k$ skeleton indices for every leaf node. By the standing local-pivot assumption, $X_{i}^{rr} \in \mathbb{R}^{k \times k}$ is nonsingular, so we define the following $2k \times 2k$ matrices
\begin{equation} \label{eq:rc}
R_{i} =
\left[\begin{array}{cc}
I_k & -X_i^{sr} \left( X_{i}^{rr} \right)^{-1}  \\
0 & I_k
\end{array}\right]
\quad
\text{and}
\quad
C_{i} =
\left[\begin{array}{cc}
I_k & 0  \\
-(X_{i}^{rr})^{-1}X_i^{rs} & I_k
\end{array}\right]
,
\end{equation}
for every leaf node $i$, where $I_k$ denotes the $k \times k$ identity matrix.
Concatenate $R_i$ and $C_i$ along the diagonal to form
\[
R \equiv \diag(R_i: \text{$i$ is a leaf node})
\quad\text{and}\quad
C \equiv \diag(C_i: \text{$i$ is a leaf node}).
\]
With these two matrices, \Cref{e:compression} is equivalent to
\begin{equation} \label{e:elimination}
\underbrace{R \tilde{Y}}_{\displaystyle \hat{Y}}
= \underbrace{R \tilde{A} C}_{\displaystyle \hat{A}}
\underbrace{C^{-1} \tilde{\Omega}}_{\displaystyle \hat{\Omega}},
\qquad
\text{and}
\qquad
\underbrace{C^\top \tilde{Z}}_{\displaystyle \hat{Z}}
= \underbrace{C^\top \tilde{A}^\top R^\top}_{\displaystyle \hat{A}^\top}
\underbrace{R^{-\top} \tilde{\Phi}}_{\displaystyle \hat{\Phi}},
\end{equation}
where every diagonal block $X_i$, after elimination, becomes
\begin{equation} \label{eq:diag}
R_i
\begin{bmatrix}
X_{i}^{ss} & X_{i}^{sr} \\
X_{i}^{rs} & X_{i}^{rr}
\end{bmatrix}
C_i
=
\begin{bmatrix}
S_{i} & 0 \\
0 & X_{i}^{rr}
\end{bmatrix}
,
\end{equation}
Here $S_i \equiv X_i^{ss} - X_i^{sr}(X_i^{rr})^{-1}X_i^{rs}$ is the Schur complement.
The off-diagonal blocks of $\tilde{A}$ are not modified. Thus, $\hat{A}$ has
no coupling between skeleton and redundant indices, as shown in the
third panel of \Cref{fig:actions-on-A}.

The transformation in \Cref{eq:diag} is likewise conceptual: neither
$A$ nor $\tilde A$ is formed explicitly. Instead, we compute
$\hat{Y}$, $\hat{\Omega}$, $\hat{Z}$, and $\hat{\Phi}$. We partition these
tall-and-skinny matrices into block rows, split each block into $s$ and
$r$ indices according to the leaf level of $\mathcal T$, and compute the
following $2k\times3k$ blocks:
\begin{equation} \label{eq:yz_hat}
\fbox{$\displaystyle
\begin{aligned}
& \hat{Y}_i = R_i \tilde{Y}_i
=
\begin{bmatrix}
\tilde{Y}_i^s - X_i^{sr} \left( X_{i}^{rr} \right)^{-1} \tilde{Y}_i^r \\
\tilde{Y}_i^r
\end{bmatrix}
,
\quad
\hat{\Omega}_i = C_i^{-1} \tilde{\Omega}_i
=
\begin{bmatrix}
\tilde{\Omega}_i^s \\
\star
\end{bmatrix}
, \\
& \hat{Z}_i = C_i^\top \tilde{Z}_i
=
\begin{bmatrix}
\tilde{Z}_i^s - \left( X_i^{rs} \right)^\top
\left( X_{i}^{rr} \right)^{-\top} \tilde{Z}_i^r \\
\tilde{Z}_i^r
\end{bmatrix}
,
\quad
\hat{\Phi}_i = R_i^{-\top} \tilde{\Phi}_i
=
\begin{bmatrix}
\tilde{\Phi}_i^s \\
\star
\end{bmatrix}
,
\end{aligned}$}
\end{equation}
where the superscript $s$ selects the first $k$ (skeleton) rows and
$r$ selects the remaining $k$ (redundant) rows of each block. The symbol $\star$
denotes quantities that are not needed and therefore not computed.
The elimination leaves the skeleton rows of the transformed test matrices
unchanged.
\begin{lemma} \label{lemma:hat}
The matrices $\hat{\Omega}_i^s = \tilde{\Omega}_i^s$ and
$\hat{\Phi}_i^s = \tilde{\Phi}_i^s$ are Gaussian matrices.
\end{lemma}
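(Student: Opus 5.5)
The plan is to prove the two claims in turn: first that the skeleton rows are left unchanged by the elimination, and then that these unchanged rows are Gaussian, by reducing to \creflem{lemma:tilde}. I will argue for $\hat{\Omega}_i^s$; the claim for $\hat{\Phi}_i^s$ follows by the same argument with $R_i^{-\top}$ in place of $C_i^{-1}$ and the roles of $(Y,\Omega)$ and $(Z,\Phi)$ exchanged.

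\textbf{Step 1 (the elimination fixes the skeleton rows).} From \Cref{eq:rc}, $C_i$ is unit block lower triangular, so its inverse is
\[
C_i^{-1}=\begin{bmatrix} I_k & 0\\ (X_i^{rr})^{-1}X_i^{rs} & I_k\end{bmatrix}.
\]
The first block row of $C_i^{-1}\tilde{\Omega}_i$ is therefore exactly $\tilde{\Omega}_i^s$, which gives $\hat{\Omega}_i^s=\tilde{\Omega}_i^s$. Similarly, $R_i$ is unit block upper triangular, so $R_i^{-\top}$ is unit block lower triangular. Its first block row is $[\,I_k\;\;0\,]$, which gives $\hat{\Phi}_i^s=\tilde{\Phi}_i^s$.

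\textbf{Step 2 (Gaussianity).} By \creflem{lemma:tilde}, $\tilde{\Omega}=V^\top\Omega$ is a standard Gaussian matrix. Any subset of its entries, in particular the $k\times 3k$ block $\tilde{\Omega}_i^s$, therefore has i.i.d.\ $\mathcal N(0,1)$ entries, and the same holds for $\tilde{\Phi}_i^s$. In the complex case the same reasoning applies with circular complex Gaussians.

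\textbf{Main obstacle.} The delicate point is the status of the $X_i$-dependent matrices $R_i$ and $C_i$. These are built from $X_i=U_i^\top A_{ii}V_i$, and later in the algorithm $X_i$ is recovered from the samples, so $R_i$ and $C_i$ may well be correlated with $\tilde{\Omega}$. Because of this, I will not claim that all of $\hat{\Omega}_i$ is Gaussian. The statement only concerns the skeleton rows, and Step 1 shows these are literally equal to rows of $\tilde{\Omega}$, so their distribution is unaffected by any such dependence.

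If ``Gaussian'' is meant conditionally, for later use at the next level, the argument has one more step. One would condition on $V$, which depends only on $(Z,\Phi)$ and is independent of $\Omega$. One would then note that $\tilde{\Omega}_i^s$ is a function of $\Omega$ alone, so its conditional law is still standard Gaussian.
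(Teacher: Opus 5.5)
Your proof is correct and follows the paper's argument. The unit block-triangular forms of $C_i^{-1}$ and $R_i^{-\top}$ leave the skeleton rows unchanged, and Gaussianity then follows from \creflem{lemma:tilde}, since a submatrix of a standard Gaussian matrix is standard Gaussian; your remark that the sample-dependent $R_i$ and $C_i$ cannot affect rows that are literally rows of $\tilde{\Omega}$ and $\tilde{\Phi}$ is a sound clarification that the paper leaves implicit.
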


\begin{proof}
The equalities follow directly from the triangular forms of $C_i^{-1}$
and $R_i^{-\top}$ in \Cref{eq:rc}. That they are Gaussian then follows from
\creflem{lemma:tilde}.
\qed
\end{proof}

\paragraph{Running example (continued).}

For the running example (\Cref{f:hss-structure}), the system
$\hat{Y} = \hat{A}\hat{\Omega}$ from \Cref{e:elimination} takes the form
in \Cref{e:elimination-example}.
The transposed system
$\hat{Z}=\hat{A}^\top\hat{\Phi}$ has the same structure, with
$\hat{Z}$ and $\hat{\Phi}$ partitioned in the same way.
\begin{equation} \label{e:elimination-example}
\left[\begin{array}{c}
\hat{Y}_4^{s} \\
\hat{Y}_4^{r} \\
\hline
\hat{Y}_5^{s} \\
\hat{Y}_5^{r} \\
\hline
\hat{Y}_6^{s} \\
\hat{Y}_6^{r} \\
\hline
\hat{Y}_7^{s} \\
\hat{Y}_7^{r} \\
\end{array}\right]
=
\left[
\begin{array}{cc|cc|cc|cc}
S_4 & 0 & B_{45} & 0 & B_{46} & 0 & B_{47} & 0 \\
0 & X_4^{rr} & 0 & 0 & 0 & 0 & 0 & 0 \\
\hline
B_{54} & 0 & S_5 & 0 & B_{56} & 0 & B_{57} & 0 \\
0 & 0 & 0 & X_5^{rr} & 0 & 0 & 0 & 0 \\
\hline
B_{64} & 0 & B_{65} & 0 & S_6 & 0 & B_{67} & 0 \\
0 & 0 & 0 & 0 & 0 & X_6^{rr} & 0 & 0 \\
\hline
B_{74} & 0 & B_{75} & 0 & B_{76} & 0 & S_7 & 0 \\
0 & 0 & 0 & 0 & 0 & 0 & 0 & X_7^{rr}
\end{array}
\right]
\left[\begin{array}{c}
\hat{\Omega}_4^{s} \\
\hat{\Omega}_4^{r} \\
\hline
\hat{\Omega}_5^{s} \\
\hat{\Omega}_5^{r} \\
\hline
\hat{\Omega}_6^{s} \\
\hat{\Omega}_6^{r} \\
\hline
\hat{\Omega}_7^{s} \\
\hat{\Omega}_7^{r} \\
\end{array}
\right]
.
\end{equation}
The explicit zeros show that the elimination decouples every redundant block.
We emphasize that $\hat{A}$ is shown only for illustration and is not
computed by the algorithm.

\subsubsection{Computation of $R_i$ and $C_i$}

The definitions of $R_i$ and $C_i$ in \Cref{eq:rc} require
the blocks $X_i^{rr}$, $X_i^{rs}$, and
$X_i^{sr}$, which are not available explicitly. We now explain
how to estimate the two $k \times k$ blocks $X_i^{rr}$ and $X_i^{rs}$
from the two $2k \times 3k$ blocks $\tilde{Y}_i$ and $\tilde{\Omega}_i$. The block $X_i^{sr}$
is obtained in the same way from $\tilde{Z}_i$ and $\tilde{\Phi}_i$.

Restricting \Cref{e:compression} to the redundant indices of leaf node $i$ gives
\begin{equation} \label{eq:ls}
\tilde{Y}_i^{r}
=
X_i^r
\tilde{\Omega}_i
,
\end{equation}
where
$X_i^r = \left[
\begin{array}{cc}
X_i^{rs} & X_i^{rr}
\end{array}
\right]
\in \mathbb{R}^{k \times 2k}
$
is the horizontal concatenation of the two blocks. For a matrix that is
only approximately HSS, \Cref{eq:ls} need not hold exactly. We therefore
recover $X_i^r$ in the least-squares sense,
\begin{equation} \label{eq:ls2}
\min_{X_i^{r}}
\left\|
\tilde{Y}_i^{r} -
X_i^r \tilde{\Omega}_i
\right\|_{\mathrm{F}}^2
,
\end{equation}
which we solve by QR factorization~\cite[Section~5.3]{golub2013matrix}.
No regularization is applied. The system is overdetermined, and its
conditioning in the tested problems tracks the ratio of equations to
unknowns. \Cref{ss:ls-conditioning} summarizes that ratio and the
level-median $\kappa(\tilde\Omega_i)$ for four representative problems.
The reported level medians are below $21$ in 2D, and the largest observed
value over nodes is $39$, at a nearly square 1D leaf. These measurements
support the use of unregularized recovery in these experiments.

When $A$ is real symmetric, we may take $V_i\equiv U_i$, so that
$X_i=U_i^\top A_{ii}U_i$ is symmetric. If $A$ is also positive definite,
then $X_i$ is SPD and the diagonal block $X_i^{rr}$, a principal submatrix,
is SPD as well. For the complex-symmetric specialization described above,
$V_i\equiv\overline{U_i}$ and
$X_i=U_i^*A_{ii}\overline{U_i}$ is complex symmetric.
However, the solution of the least-squares problem \Cref{eq:ls2}
is not necessarily symmetric or positive definite.
We enforce symmetry by the replacement
$X_i^{rr}\gets\left(X_i^{rr}+(X_i^{rr})^\top\right)/2$.
In our numerical experiments, positive definiteness is usually preserved.
Otherwise, we add a shift $\sigma I$, with $\sigma$ initialized to
$10^{-12}\|X_i^{rr}\|_{\mathrm F}$ and doubled until the Cholesky
factorization succeeds.
Given the updated $X_i^{rr}$, we can also improve the accuracy of the block $X_i^{rs}$ by solving another least-squares problem:
\begin{equation}
\min_{X_i^{rs}} \left\|
\left(
\tilde{Y}_i^{r} - X_i^{rr}
\tilde{\Omega}_i^{r}
\right)
- X_i^{rs} \tilde{\Omega}_i^{s} \right\|_{\mathrm{F}}^2
.
\end{equation}

\subsection{Schur Complement Structure}\label{ss:p}

We now permute $\hat A$ as in the last panel of
\Cref{fig:actions-on-A}. After the elimination in \Cref{ss:x_rr_rs_sr}, the redundant
indices of every node are decoupled from the rest of the matrix. The
skeleton--skeleton couplings form a smaller Schur complement problem
with the same HSS structure. We can therefore apply
\Cref{ss:form-u,ss:x_rr_rs_sr} recursively.

For every leaf node $i$, let $P_i^s$ and $P_i^r$ select the first $k$
skeleton coordinates and the remaining $k$ redundant coordinates,
respectively. Thus,
$
P^s_i
=
\begin{bmatrix}
I_k & 0_k
\end{bmatrix}
$
and
$
P^r_i
=
\begin{bmatrix}
0_k & I_k
\end{bmatrix}
$
where $I_k$ and $0_k$ are the $k\times k$ identity and zero matrices.
Define the permutation matrix
$P
\equiv
\begin{bmatrix}
P_r \\
P_s
\end{bmatrix} $
with
\[
P_r
\equiv
\diag(P^r_i: \text{$i$ is a leaf node})
\quad
\text{and}
\quad
P_s
\equiv
\diag(P^s_i: \text{$i$ is a leaf node})
,
\]

Permuting rows and columns accordingly, \Cref{e:elimination} becomes
\begin{equation} \label{e:pruty}
P \hat{Y} = (P\hat{A}P^\top)(P\hat{\Omega}),
\qquad
\text{and}
\qquad
P\hat{Z} = (P\hat{A}^\top P^\top)(P\hat{\Phi}),
\end{equation}
where the matrix $P \hat{A} P^\top$ places all
redundant indices before the skeleton indices.
Since $\hat{A}$ has no skeleton--redundant coupling, the two off-diagonal blocks of $P \hat{A} P^\top$ are zero:
\begin{equation} \label{e:pap}
P \hat{A} P^\top
=
\begin{bmatrix}
P_r \hat{A} P_r^\top &  \\
 & P_s \hat{A} P_s^\top
\end{bmatrix}
,
\end{equation}
where the first diagonal block $P_r \hat{A} P_r^\top = \diag(X_i^{rr}: \text{$i$ is a leaf node})$, and the second diagonal block is the Schur complement,
which satisfies the following equations:
\begin{equation} \label{eq:coarse}
\underbrace{P_s \hat{Y}}_{\displaystyle {Y}^{(L-1)}}
=
\underbrace{P_s \hat{A} P_s^\top}_{\displaystyle A^{(L-1)}}
\underbrace{P_s \hat{\Omega}}_{\displaystyle {\Omega}^{(L-1)}}
,
\qquad
\text{and}
\qquad
\underbrace{P_s \hat{Z}}_{\displaystyle {Z}^{(L-1)}}
= \underbrace{P_s \hat{A}^\top P_s^\top}_{\displaystyle \left( A^{(L-1)} \right)^\top}
\underbrace{P_s \hat{\Phi}}_{\displaystyle {\Phi}^{(L-1)}}
.
\end{equation}

The superscript $L-1$ records that $A^{(L-1)}$ is the rank-$k$ HSS
matrix associated with the top $L-1$ levels of $\mathcal T$, that is, levels
$0$ through $L-1$ once the leaf level has been eliminated. Its
off-diagonal blocks are those in \Cref{eq:uv_other}.

As before, we never form the matrix $A^{(L-1)}$ explicitly. Instead, we
derive the inputs and outputs of the matvecs with $A^{(L-1)}$ and its transpose.
Specifically, we derive ${Y}^{(L-1)}$, ${\Omega}^{(L-1)}$,
${Z}^{(L-1)}$, and ${\Phi}^{(L-1)}$ as follows: %
Let the two children of a node $i$ at level $L-1$ be $\alpha$ and $\beta$. We compute the $2k \times 3k$ blocks
\begin{equation}
\fbox{
$\displaystyle
Y_i
=
 \begin{bmatrix} \hat{Y}_{\alpha}^s \\ \hat{Y}_{\beta}^s \end{bmatrix}
 ,
 \quad
\Omega_i
=
 \begin{bmatrix} \hat{\Omega}_{\alpha}^s \\ \hat{\Omega}_{\beta}^s \end{bmatrix}
 ,
 \quad
Z_i
=
 \begin{bmatrix} \hat{Z}_{\alpha}^s \\ \hat{Z}_{\beta}^s \end{bmatrix}
 ,
 \quad
 \Phi_i
 =
  \begin{bmatrix} \hat{\Phi}_{\alpha}^s \\ \hat{\Phi}_{\beta}^s \end{bmatrix}
  .
$}
\end{equation}
We drop the superscript here to simplify the notation, because the tree node $i$ is at level $L-1$ and the level is therefore implied.

At the first upward step, the reduced test matrices remain Gaussian.
\begin{lemma} \label{t:omega-iid}
The matrices $\Omega^{(L-1)}$ and $\Phi^{(L-1)}$ are random Gaussian matrices.
\end{lemma}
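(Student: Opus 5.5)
By construction, $\Omega^{(L-1)}=P_s\hat\Omega$ is the vertical stack of the skeleton blocks $\hat\Omega_i^s$ over all leaf nodes $i$. The analogous statement holds for $\Phi^{(L-1)}=P_s\hat\Phi$ with blocks $\hat\Phi_i^s$. The plan is to reduce the statement to \creflem{lemma:hat} and \creflem{lemma:tilde}, and then check that the stacked matrix is jointly standard Gaussian, not merely blockwise Gaussian. By \creflem{lemma:hat}, each $\hat\Omega_i^s$ equals $\tilde\Omega_i^s$, the first $k$ rows of $\tilde\Omega_i=V_i^\top\Omega_i$. Hence
\[
  \Omega^{(L-1)} = P_s\tilde\Omega = P_sV^\top\Omega .
\]
The same identity gives $\Phi^{(L-1)}=P_sU^\top\Phi$. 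The proof therefore concerns only the row selection $P_s$ applied to the transformed Gaussians, and not the elimination matrices $R$ and $C$.

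The key step is a conditioning argument that mirrors the proof of \creflem{lemma:tilde}. The block-diagonal orthogonal matrix $V$ is computed from $(Z,\Phi)$ alone, and $Z=A^\top\Phi$, so $V$ is independent of $\Omega$. Conditional on $V$, the matrix $V^\top\Omega$ is an $N\times 3k$ standard Gaussian by orthogonal invariance. Its entries are therefore i.i.d.\ $\mathcal N(0,1)$. $P_s$ simply selects a subset of rows, so the selected rows are also i.i.d.\ $\mathcal N(0,1)$. The conditional law does not depend on $V$, so the unconditional law of $\Omega^{(L-1)}$ is standard Gaussian as well. The symmetric argument, which uses that $U$ depends only on $(Y,\Omega)$ and not on $\Phi$, gives the claim for $\Phi^{(L-1)}$. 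The complex case follows by replacing transposes with conjugate transposes and using unitary invariance of circular Gaussians, exactly as in \creflem{lemma:tilde}.

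The main obstacle is the independence bookkeeping. The block $X_i^{sr}$, which enters $R_i$, is estimated from $(\tilde Z_i,\tilde\Phi_i)$. The block $X_i^{rs}$, which enters $C_i$, is estimated from $(\tilde Y_i,\tilde\Omega_i)$, and so it does depend on $\Omega$. We must make sure this dependence does not contaminate $\Omega^{(L-1)}$. The saving point is the triangular form of $C_i^{-1}$ in \Cref{eq:rc}: it leaves the skeleton rows of $\tilde\Omega_i$ untouched, so no data-dependent factor ever multiplies $\hat\Omega_i^s$. The only randomness entering $\Omega^{(L-1)}$ is $V$, and $V$ is independent of $\Omega$. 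I would state this explicitly and emphasize that the joint Gaussianity is inherited from $V^\top\Omega$ as a whole. Treating the $\hat\Omega_i^s$ blocks separately would not suffice, because that alone does not give joint independence across nodes.
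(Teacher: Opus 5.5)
Your proof is correct and follows the paper's route. The paper likewise uses \creflem{lemma:hat} to replace $\hat\Omega_i^s$ by $\tilde\Omega_i^s$, invokes \creflem{lemma:tilde}, and notes that $\Omega^{(L-1)}$ only stacks these skeleton rows. You inline the conditioning-on-$V$ argument and state explicitly that joint Gaussianity across nodes comes from $V^\top\Omega$ as a whole; the paper gets this implicitly, because \creflem{lemma:tilde} is stated for the full matrix $\tilde\Omega$.
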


\begin{proof}
By \creflem{lemma:hat}, the skeleton blocks
$\hat\Omega_i^s=\tilde\Omega_i^s$ and
$\hat\Phi_i^s=\tilde\Phi_i^s$ are Gaussian. Forming $\Omega^{(L-1)}$ and
$\Phi^{(L-1)}$ only stacks these blocks, so both matrices are Gaussian.
\qed
\end{proof}

\paragraph{Running example (continued).}

For the running example in \Cref{f:hss-structure}, the first equation in
\Cref{e:pruty} is \Cref{e:elimination-example} with the redundant
coordinates placed first:
\begin{equation} \label{e:pruty-example}
\left[\begin{array}{c}
\hat{Y}_4^{r} \\
\hat{Y}_5^{r} \\
\hat{Y}_6^{r} \\
\hat{Y}_7^{r} \\
\hline
\hat{Y}_4^{s} \\
\hat{Y}_5^{s} \\
\hline
\hat{Y}_6^{s} \\
\hat{Y}_7^{s} \\
\end{array}
\right]
=
\left[
\begin{array}{cccc|cc|cc}
X_4^{rr} & 0 & 0 & 0 & 0 & 0 & 0 & 0 \\
0 & X_5^{rr} & 0 & 0 & 0 & 0 & 0 & 0 \\
0 & 0 & X_6^{rr} & 0 & 0 & 0 & 0 & 0 \\
0 & 0 & 0 & X_7^{rr} & 0 & 0 & 0 & 0 \\
\hline
0 & 0 & 0 & 0 & S_4 & B_{45} & B_{46} & B_{47} \\
0 & 0 & 0 & 0 & B_{54} & S_5 & B_{56} & B_{57} \\
\hline
0 & 0 & 0 & 0 & B_{64} & B_{65} & S_6 & B_{67} \\
0 & 0 & 0 & 0 & B_{74} & B_{75} & B_{76} & S_7
\end{array}
\right]
\left[\begin{array}{c}
\hat{\Omega}_4^{r} \\
\hat{\Omega}_5^{r} \\
\hat{\Omega}_6^{r} \\
\hat{\Omega}_7^{r} \\
\hline
\hat{\Omega}_4^{s} \\
\hat{\Omega}_5^{s} \\
\hline
\hat{\Omega}_6^{s} \\
\hat{\Omega}_7^{s} \\
\end{array}
\right]
,
\end{equation}
where we have
\begin{equation} \label{e:a-level-up}
A^{(1)}
=
\left[
\begin{array}{c|c}
A^{(1)}_{22} & A^{(1)}_{23}  \\
\hline
A^{(1)}_{32} & A^{(1)}_{33}
\end{array}
\right]
=
\left[
\begin{array}{cc|cc}
S_4 & B_{45} & B_{46} & B_{47} \\
B_{54} & S_5 & B_{56} & B_{57} \\
\hline
B_{64} & B_{65} & S_6 & B_{67} \\
B_{74} & B_{75} & B_{76} & S_7
\end{array}
\right],
\end{equation}
which retains the HSS structure. The $2 \times 2$ blocking
corresponds to the two parent nodes 2 and 3 at level 1 of the HSS tree in \Cref{fig:hss-tree-diagram}.

Correspondingly, the inputs and outputs of our matvecs are
\begin{equation} \label{e:y_hat2}
\renewcommand{\arraystretch}{1.3}
Y^{(1)}
=
\left[\begin{array}{c}
Y_2  \\
\hline
Y_3  \\
\end{array}\right]
=
\left[\begin{array}{c}
\hat{Y}_4^{s} \\
\hat{Y}_5^{s} \\
\hline
\hat{Y}_6^{s} \\
\hat{Y}_7^{s}
\end{array}\right]
,
\,
\,
\Omega^{(1)}
=
\left[\begin{array}{c}
\Omega_2  \\
\hline
\Omega_3  \\
\end{array}\right]
=
\left[\begin{array}{c}
\hat{\Omega}_4^{s} \\
\hat{\Omega}_5^{s} \\
\hline
\hat{\Omega}_6^{s} \\
\hat{\Omega}_7^{s}
\end{array}\right]
=
\left[\begin{array}{c}
\tilde{\Omega}_4^{s} \\
\tilde{\Omega}_5^{s} \\
\hline
\tilde{\Omega}_6^{s} \\
\tilde{\Omega}_7^{s}
\end{array}\right]
\end{equation}
where the last equality follows from \Cref{eq:yz_hat}, and
 $\tilde{\Omega}_i^{s}$ ($i=4,5,6,7$) is a random Gaussian matrix by \Cref{eq:yz_tilde} and \creflem{lemma:tilde}.

\subsection{Entire Algorithm and Complexity}\label{ss:linear-solve-matvec}

Set $A^{(L)}\equiv A$, $Y^{(L)}\equiv Y$,
$\Omega^{(L)}\equiv\Omega$, $Z^{(L)}\equiv Z$, and
$\Phi^{(L)}\equiv\Phi$ in \Cref{e:y=ao}. The constructions in
\Cref{ss:form-u,ss:x_rr_rs_sr,ss:p} recursively produce
\begin{equation}
Y^{(\ell)} = A^{(\ell)}\Omega^{(\ell)},
\qquad \text{and} \qquad
Z^{(\ell)} = (A^{(\ell)})^\top\Phi^{(\ell)},
\end{equation}
corresponding to the HSS tree level $\ell=L,L-1,\ldots,1,0$.
All four matrices in this pair of identities have $3k$ columns.
In particular, at the root level, we have
\begin{equation} \label{e:y_hat2=a_hat2*o_hat2}
Y^{(0)} = A^{(0)}\Omega^{(0)}, \qquad \text{and} \qquad
Z^{(0)} = (A^{(0)})^\top\Phi^{(0)},
\end{equation}
which involves
the small $2k \times 2k$ matrix $A^{(0)}$.
Since the root level has no off-diagonal blocks,
$X_1^{rr}\equiv A^{(0)}$, and we recover it from
$Y^{(0)}=X_1^{rr}\Omega^{(0)}$ by the least-squares procedure
of \Cref{ss:x_rr_rs_sr}. \Cref{alg:fixed-rank} summarizes the complete
construction.

\begin{algorithm}
\caption{Fixed-Rank HSS Compression and Factorization}
\label{alg:fixed-rank}
\begin{algorithmic}[1]
\Require HSS tree $\mathcal T$ of depth $L$ with leaf size $2k$, the
convention adopted in \Cref{s:fixed-rank};
sample matrices $Y,Z\in\mathbb R^{N\times3k}$ and test matrices
$\Omega,\Phi\in\mathbb R^{N\times3k}$ satisfying
$Y=A\Omega$ and $Z=A^\top\Phi$
\Ensure Factors $\{U_i,V_i,X_i^{rr},X_i^{rs},X_i^{sr}\}_{i\ne1}$
and root block $X_1^{rr}$
\Function{FixedRankHSS}{$\mathcal T,Y,Z,\Omega,\Phi,k$}
\For{level $\ell=L,L-1,\dots,0$}
    \If{$\ell=0$}
        \State Let $\alpha,\beta$ be the children of the root
        \State $\Omega_1 \gets \begin{bmatrix}\tilde\Omega_\alpha^s\\ \tilde\Omega_\beta^s\end{bmatrix}$
        \State $Y_1 \gets
        \begin{bmatrix}\tilde Y_\alpha^s-X_\alpha^{sr}(X_\alpha^{rr})^{-1}\tilde Y_\alpha^r\\ \tilde Y_\beta^s-X_\beta^{sr}(X_\beta^{rr})^{-1}\tilde Y_\beta^r\end{bmatrix}$
        \State Recover $X_1^{rr}\equiv A^{(0)}$ from
        $Y_1=X_1^{rr}\Omega_1$ by least squares
    \Else
        \For{node $i$ at level $\ell$}
            \If{$i$ is a leaf}
                \State $(Y_i,\Omega_i,Z_i,\Phi_i) \gets
                (Y,\Omega,Z,\Phi)(\mathcal I_i,:)$
            \Else
                \State Let $\alpha,\beta$ be the children of $i$
                \State $\Omega_i \gets \begin{bmatrix}\tilde\Omega_\alpha^s\\ \tilde\Omega_\beta^s\end{bmatrix}$,
                $\Phi_i \gets \begin{bmatrix}\tilde\Phi_\alpha^s\\ \tilde\Phi_\beta^s\end{bmatrix}$
                \State $Y_i \gets
                \begin{bmatrix}\tilde Y_\alpha^s-X_\alpha^{sr}(X_\alpha^{rr})^{-1}
                \tilde Y_\alpha^r\\ \tilde Y_\beta^s-X_\beta^{sr}(X_\beta^{rr})^{-1}
                \tilde Y_\beta^r\end{bmatrix}$
                \State $Z_i \gets
                \begin{bmatrix}\tilde Z_\alpha^s-(X_\alpha^{rs})^\top
                (X_\alpha^{rr})^{-\top}\tilde Z_\alpha^r\\ \tilde Z_\beta^s-(X_\beta^{rs})^\top
                (X_\beta^{rr})^{-\top}\tilde Z_\beta^r\end{bmatrix}$
            \EndIf
            \State $U_i \gets \textsc{ComputeBasis}(\Omega_i,Y_i,k)$,
            $V_i \gets \textsc{ComputeBasis}(\Phi_i,Z_i,k)$
            \State $\tilde Y_i \gets U_i^\top Y_i$,
            $\tilde\Omega_i \gets V_i^\top\Omega_i$,
            $\tilde Z_i \gets V_i^\top Z_i$,
            $\tilde\Phi_i \gets U_i^\top\Phi_i$
            \State Recover $X_i^{rr},X_i^{rs},X_i^{sr}$
            \hfill \Comment{\gray{$\triangleright$~\Cref{ss:x_rr_rs_sr}}}
        \EndFor
    \EndIf
\EndFor
\State \Return $\{U_i,V_i,X_i^{rr},X_i^{rs},X_i^{sr}\}_{i\ne1}$,
$X_1^{rr}$
\EndFunction
\end{algorithmic}
\end{algorithm}

\begin{remark}[Cross-dependence at coarser levels]\label{rem:cross-dependence}
The Gaussian statement in \creflem{t:omega-iid} applies only to the first
upward step. At coarser levels, the recursively constructed $U$ and $V$
bases introduce cross-dependence, so the transformed test matrices need
not have independent Gaussian entries. We nevertheless continue using the
transformed test matrices at the coarser levels. The same observation, and
the same practical resolution, appear
in~\cite[Remark~4.3]{levitt2024linear}.
Gaussian entries are not required for correctness. \Cref{th:correctness-fixed}
shows that the factorization is exact whenever the sketches reveal the
correct subspaces, whatever the distribution of the test matrices. The reused
test matrices also remain generic in a measurable sense. \Cref{ss:ls-conditioning}
reports that the level-median condition numbers in four representative
problems track the Marchenko--Pastur benchmark for Gaussian matrices of
the same aspect ratio. This is conditioning evidence, not a distributional
or independence guarantee.
\end{remark}

\begin{remark}[Additional test columns]\label{rem:additional-columns}
The identities behind \Cref{eq:coarse} hold column by column, so the test
columns need not all be drawn at the outset. A batch of $b$ columns drawn
after the factors of the levels $L,\dots,\ell+1$ have been stored can
still be carried to level $\ell$. It cannot be drawn for $A^{(\ell)}$
directly, because $A^{(\ell)}$ is available only through those factors.
We draw it at the leaf level instead:
$\Omega_{\mathrm{add}}\in\mathbb R^{N\times b}$ is standard Gaussian, and
$Y_{\mathrm{add}}=A\Omega_{\mathrm{add}}$ costs $b$ matvecs. The pair is then
carried up the tree through the stored transformations, one level at a
time, exactly as the original columns of $\Omega$ and $Y$. At a finer level
$\ell'=L,\ldots,\ell+1$, the columns are partitioned into block rows by
the nodes $j$ of that level. Each node applies the basis change of
\Cref{eq:yz_tilde} with its stored $U_j$ and $V_j$, and then the
elimination of \Cref{eq:yz_hat} with its stored $X_j^{sr}(X_j^{rr})^{-1}$:
\[
\begin{gathered}
  \tilde Y_j = U_j^\top Y_{j,\mathrm{add}},\qquad
  \tilde\Omega_j = V_j^\top\Omega_{j,\mathrm{add}},\\
  \hat Y_j^{s} = \tilde Y_j^{s}-X_j^{sr}(X_j^{rr})^{-1}\tilde Y_j^{r},\qquad
  \hat\Omega_j^{s} = \tilde\Omega_j^{s}.
\end{gathered}
\]
Only the skeleton rows are passed up. The parent of $j$ at level $\ell'-1$
receives the stacked skeleton rows of its children, as in \Cref{eq:coarse},
and the elimination leaves the skeleton rows of the test matrix unchanged
(\creflem{lemma:hat}). Every step uses stored factors only, so the $b$
matvecs that produced $Y_{\mathrm{add}}$ are the only matvecs involved.
Because the identities behind \Cref{eq:coarse} hold column by column, the
columns that arrive at level $\ell$ satisfy
$Y^{(\ell)}_{\mathrm{add}}=A^{(\ell)}\Omega^{(\ell)}_{\mathrm{add}}$
exactly. A batch $\Phi_{\mathrm{add}}$ with
$Z_{\mathrm{add}}=A^\top\Phi_{\mathrm{add}}$ is carried in the same way,
with the roles of $U_j$ and $V_j$ exchanged. At a node of size $m$ and
rank $k$, carrying one column through one level costs $\mathcal O(mk)$
for the reflectors and $\mathcal O((m-k)^2)$ for the solve with
$X_j^{rr}$. This is the cost of one level of the factorized solve below.

Such a batch is also free of the cross-dependence of
\crefrem{rem:cross-dependence}. The bases $U_j$ and $V_j$ were built
before it was drawn, so they act on it as fixed orthogonal matrices, and
orthogonal invariance applies as in \creflem{lemma:tilde}. The batch
therefore arrives at level $\ell$ still Gaussian and independent of
$A^{(\ell)}$. Columns that were used to build those bases lose this
property. The adaptive construction of \Cref{s:adaptive-rank} draws such
batches when the columns already present at level $\ell$ do not suffice.
\end{remark}

\paragraph{Factorization form.}\label{ss:handle-the-root}
\Cref{alg:fixed-rank} implicitly constructs a factorization of
$A$. At the leaf level, \Cref{e:pruty} gives
\[
P \hat{A} P^\top
=
P R \tilde{A} C P^\top
=
\underbrace{P R U^\top }_{\displaystyle W^{(L)}}
{A}
\underbrace{V C P^\top }_{\displaystyle S^{(L)}}
.
\]
The analogous transformations $W^{(\ell)}$ and $S^{(\ell)}$ act on
$A^{(\ell)}$ for $\ell=L-1,\ldots,1$. Let
$\bar W^{(\ell)}$ and $\bar S^{(\ell)}$ denote these transformations
embedded in the original coordinates, with identity blocks on the variables
eliminated at finer levels. Then
\begin{equation}\label{eq:recursive-a-root}
D \equiv
\diag\bigl(X_i^{rr}:i\in\mathcal T\bigr)
=
\bar W^{(1)}\cdots\bar W^{(L-1)}W^{(L)}
A
S^{(L)}\bar S^{(L-1)}\cdots\bar S^{(1)},
\end{equation}
where $X_1^{rr}\equiv A^{(0)}$ is the root block. Consequently,
\[
A^{-1}
=S^{(L)}\bar S^{(L-1)}\cdots\bar S^{(1)}
D^{-1}
\bar W^{(1)}\cdots\bar W^{(L-1)}W^{(L)}.
\]
Every factor is a composition of block-diagonal orthogonal and triangular
transformations and a permutation, so it can be applied efficiently.
For the running example,
$D=\diag(X_4^{rr},X_5^{rr},X_6^{rr},X_7^{rr},
X_2^{rr},X_3^{rr},X_1^{rr})$, ordered from the leaves to the root.

\paragraph{Factorized solve.}
Applying \Cref{eq:recursive-a-root} consists of an upward traversal, a
dense solve with the root block, and a downward traversal, as in a standard
HSS solve~\cite{chandrasekaran2006afast,xia2010fast}. Starting with
$y_i=f(\mathcal I_i)$ at the leaves, the upward traversal applies $U_i^\top$
and eliminates the redundant component via
\[
  y_i^s\gets y_i^s-X_i^{sr}(X_i^{rr})^{-1}y_i^r;
\]
an internal node receives the stacked skeleton components of its children.
After solving with the root block $X_1^{rr}$, the downward traversal splits
the parent solution among its children, recovers
$x_i^r=(X_i^{rr})^{-1}(y_i^r-X_i^{rs}x_i^s)$, and applies $V_i$. The leaf
vectors are then scattered to the original coordinates. This solve and the analogous matvec with the HSS factorization both cost $\mathcal O(Nk)$. We omit the
standard details.

The derivation of the factorization form also gives
\Cref{th:correctness-fixed}, a deterministic correctness statement. The
theorem isolates the generic-sketch condition needed at coarser levels, where
the recursively transformed test matrices are no longer independent Gaussian
matrices.
\begin{theorem}[Exactness under successful sketches]\label{th:correctness-fixed}
Let $A\in\mathbb R^{N\times N}$ be an exact rank-$k$ HSS matrix satisfying
the standing local-pivot assumption. Assume that every coefficient matrix
in the local and root least-squares recoveries has full row rank and that
these systems are solved exactly. If, at every nonroot node,
\textnormal{\textsc{ComputeBasis}} returns the exact off-diagonal column and row spaces,
then \Cref{alg:fixed-rank} constructs the exact factorization
\Cref{eq:recursive-a-root}. Consequently, the factorized solve returns the
exact solution in exact arithmetic whenever $A$ is nonsingular.
\end{theorem}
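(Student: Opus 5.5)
We argue by induction up the tree, level by level, carrying the invariant that at level $\ell$ the algorithm holds exact data $Y^{(\ell)}=A^{(\ell)}\Omega^{(\ell)}$ and $Z^{(\ell)}=(A^{(\ell)})^\top\Phi^{(\ell)}$, where $A^{(\ell)}$ is an exact rank-$k$ HSS matrix on the tree truncated at level $\ell$. The base case $\ell=L$ holds by \Cref{e:y=ao}. No probabilistic statement is used. In place of \creflem{lemma:tilde} and \creflem{t:omega-iid} we use only the hypotheses: the bases are exact, the least-squares coefficient matrices have full row rank, and the pivots $X_i^{rr}$ are nonsingular. This is why the cross-dependence of \crefrem{rem:cross-dependence} is harmless.

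\textbf{Inductive step at a level $\ell\ge1$.} First, since \textsc{ComputeBasis} returns the exact column space of $A^{(\ell)}_{ii^c}$ and the exact row space of $A^{(\ell)}_{i^ci}$, the first $k$ columns of $U_i$ and $V_i$ span these spaces. Hence $U_i^\top A^{(\ell)}_{ij}V_j$ has the sparsity pattern \Cref{eq:sparse} for every $j\neq i$. At internal levels, the blocks $A^{(\ell)}_{ij}$ are the aggregates \Cref{eq:aij_other}, so we invoke the nested-basis factorization \Cref{eq:uv_other}.

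Second, restricting $\tilde Y=\tilde A\tilde\Omega$ to the redundant rows of node $i$ gives \Cref{eq:ls} exactly, because those rows of $\tilde A$ vanish outside $X_i$. Since $\tilde\Omega_i$ has full row rank, the minimizer of \Cref{eq:ls2} is unique and equals the true $X_i^r$. The same argument applied to $\tilde Z,\tilde\Phi$ recovers $X_i^{sr}$. The symmetrization step is vacuous here, because the recovered block is already the true symmetric one.

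Third, $R_i$ and $C_i$ in \Cref{eq:rc} are therefore exact, and \Cref{eq:diag} holds. The off-diagonal blocks are untouched: $R_i$ only mixes rows within block $i$, and those rows are zero off the diagonal except in the skeleton part, so the $r$-rows remain zero. Permuting by $P$ yields \Cref{e:pap}. The skeleton data computed by the algorithm are exactly $P_s\hat Y$ and $P_s\hat\Omega$, as shown by \Cref{eq:yz_hat} and \creflem{lemma:hat}, so the invariant holds at level $\ell-1$.

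\textbf{Main obstacle.} The delicate point is that the Schur complement $A^{(\ell-1)}=P_s\hat AP_s^\top$ is again an exact rank-$k$ HSS matrix on the truncated tree. We plan to verify this directly. For sibling leaves $\alpha,\beta$ of parent $i$, the new diagonal block is $\bigl[\begin{smallmatrix}S_\alpha&B_{\alpha\beta}\\B_{\beta\alpha}&S_\beta\end{smallmatrix}\bigr]$. For nodes $i\ne j$ at level $\ell-1$, the off-diagonal block is precisely the aggregate \Cref{eq:aij_other} of couplings $B$. By \Cref{hss-matrix-definition} this aggregate factors as $\tilde U_iB_{ij}\tilde V_j^*$ with $2k\times k$ transfer matrices. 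Consequently, the block row $A^{(\ell-1)}_{ii^c}$ equals $\tilde U_i$ times a matrix with $k$ rows, and so has rank at most $k$ (exactly $k$ by the standing assumption). This supplies the hypothesis needed to apply \textsc{ComputeBasis} at the next level.

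\textbf{Root and conclusion.} At the root, $Y^{(0)}=A^{(0)}\Omega^{(0)}$ holds exactly with $\Omega^{(0)}$ of full row rank, so least squares recovers $X_1^{rr}=A^{(0)}$ exactly. Composing the exact identities $W^{(\ell)}A^{(\ell)}S^{(\ell)}=\diag(X^{rr}_i)\oplus A^{(\ell-1)}$ over all levels, with embeddings by identity blocks, gives \Cref{eq:recursive-a-root}. Each $W^{(\ell)}$ and $S^{(\ell)}$ is a product of orthogonal, unit-triangular, and permutation factors, and is therefore invertible. If $A$ is nonsingular, $D$ is nonsingular, in particular $X_1^{rr}$. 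The inverse formula then holds, and the factorized solve, which applies it exactly, returns $A^{-1}f$.
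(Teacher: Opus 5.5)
Your proposal is correct and follows the same route as the paper's proof, with details the paper leaves implicit filled in: induction up the tree in which exact bases give the sparsity of \Cref{eq:sparse}, the full-row-rank hypothesis makes the least-squares recovery unique, the elimination is exact, and the reduced matrix inherits the HSS structure. In your ``main obstacle'' step, state explicitly that the couplings produced by the algorithm (formed with the first $k$ columns of the computed $U_\alpha$ and $V_{\alpha'}$) differ from the $B_{\alpha\alpha'}$ of \crefdef{hss-matrix-definition} by node-wise $k\times k$ orthogonal changes of basis; these are absorbed into the transfer matrices and leave the rank-$k$ factorization of the aggregates intact.
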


\begin{proof}
At a leaf, the basis changes in \Cref{eq:sparse} annihilate every redundant
off-diagonal row and column. The transformations in \Cref{eq:rc} then
eliminate the remaining skeleton--redundant couplings exactly, leaving the
Schur complement on the skeleton indices. The full-row-rank assumptions make
the local and root coefficient blocks uniquely recoverable by the stated
least-squares solves. The reduced matrix has the same HSS structure on the
parent level, so induction over the tree proves
\Cref{eq:recursive-a-root}. Inverting the transformations in reverse order
gives the factorized solve. At the leaf level, the required range conditions
hold with probability one for the independent Gaussian test matrices, by the
argument in \Cref{ss:form-u}. At coarser levels, they are the explicit
hypothesis of the theorem, consistent with \crefrem{rem:cross-dependence}.
\qed
\end{proof}

Finally, we summarize the operation counts.
\begin{theorem}\label{th:complexity-fixed}
Given a rank-$k$ HSS matrix $A \in \mathbb{R}^{N \times N}$, the construction of \Cref{s:fixed-rank} uses
$3k$ matvecs with each of $A$ and $A^\top$. Excluding these
matvecs, it requires $\mathcal{O}(Nk^2)$ flops for factorization,
$\mathcal{O}(Nk)$ flops for the linear solve, and $\mathcal{O}(Nk)$
storage. If one matvec with $A$ and one with $A^\top$ cost $T_A$ and $T_{A^\top}$,
respectively, the total construction cost is
$3k(T_A+T_{A^\top})+\mathcal O(Nk^2)$. If $A^\top=A$, the choice
$\Phi=\Omega$ makes the second sketch redundant, and the count falls to
$3k$ matvecs with $A$ alone, at a total cost of
$3kT_A+\mathcal O(Nk^2)$.
\end{theorem}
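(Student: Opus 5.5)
The count of matvecs is the direct part. \Cref{alg:fixed-rank} forms $Y=A\Omega$ and $Z=A^\top\Phi$ once, with $\Omega,\Phi\in\mathbb R^{N\times 3k}$, so it uses exactly $3k$ matvecs with each operator. Every later step works only on $Y,\Omega,Z,\Phi$ and their transformed blocks; this is the content of the recursion in \Cref{ss:p}. The construction therefore makes no further matvecs. In the symmetric case, $\Phi=\Omega$ gives $Z=Y$, which removes the $A^\top$ sketch and leaves $3kT_A$.

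For the factorization flops, I would bound the work at a single node and then sum over the tree. Every nonroot node, leaf or internal, processes blocks of size $m=2k$ with $p=3k$ columns. The steps at a node are as follows.
\begin{enumerate}
\item Two calls to \textsc{ComputeBasis}, each $\mathcal O(k^3)$ by \creflem{lemma:compute-basis-cost} with $p=m+k$.
\item The four products in \Cref{eq:yz_tilde}: $2k\times2k$ matrices times $2k\times3k$ blocks, $\mathcal O(k^3)$.
\item The least-squares recoveries of $X_i^{rr},X_i^{rs},X_i^{sr}$ from $k\times 3k$ right-hand sides against $2k\times 3k$ coefficient matrices, with QR cost $\mathcal O(k^3)$. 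This includes the optional symmetrization and shifted Cholesky, whose retries I would treat as a bounded number of $\mathcal O(k^3)$ attempts.
\item The elimination updates in \Cref{eq:yz_hat}, which require one $k\times k$ factorization of $X_i^{rr}$ and a few $k\times k$ by $k\times 3k$ products, again $\mathcal O(k^3)$.
\end{enumerate}
Stacking the children's skeleton rows is free. The root recovers a $2k\times 2k$ block at cost $\mathcal O(k^3)$.

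Since the leaves have size $2k$, there are $N/(2k)$ leaves and fewer than $N/k$ nodes in total. Summing gives $\mathcal O(N/k)\cdot\mathcal O(k^3)=\mathcal O(Nk^2)$.

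For storage, each node keeps $U_i,V_i$ ($\mathcal O(k^2)$), or equivalently their Householder reflectors, together with $X_i^{rr},X_i^{rs},X_i^{sr}$ and their factorizations ($\mathcal O(k^2)$). That is $\mathcal O(k^2)$ per node and $\mathcal O(Nk)$ overall. The sample matrices take $\mathcal O(Nk)$ as well. Only the current level's blocks are needed, and these total at most $N\times 3k$ entries per level, reused in place.

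For the solve, I would follow the traversal described after \Cref{eq:recursive-a-root}. Each node applies $U_i^\top$ and $V_i$ to $2k$-vectors and performs triangular solves and products with $k\times k$ blocks, all at cost $\mathcal O(k^2)$. The root solve is also $\mathcal O(k^2)$ using the stored factorization. Summing over $\mathcal O(N/k)$ nodes gives $\mathcal O(Nk)$.

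Combining these bounds gives the total cost $3k(T_A+T_{A^\top})+\mathcal O(Nk^2)$.

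The only delicate point is confirming that the block dimensions stay uniform at internal nodes, so that the per-node bound holds at every level. By the recursion in \Cref{ss:p}, each internal node receives $2k$ stacked skeleton rows and still has $3k$ columns, so the same $m=2k$, $p=3k$ regime applies at every level.
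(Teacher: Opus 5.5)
Your proof is correct and follows the paper's argument: the matvec count is the number of columns of $\Omega$ and $\Phi$, and no later step applies $A$ or $A^\top$. The paper calls the flop, storage, and solve analysis standard and omits it, citing Levitt--Martinsson and Yesypenko--Martinsson; your accounting of $\mathcal O(k^3)$ work and $\mathcal O(k^2)$ storage and solve cost per node, summed over fewer than $N/k$ nodes with every internal node again seeing $2k\times 3k$ blocks, supplies exactly those omitted details.
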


\begin{proof}
The number of matvecs equals the number of columns of
the test matrices $\Omega$ and $\Phi$ in \Cref{e:y=ao}. The flop and
storage analysis is standard, and we omit it. The bounds are
consistent with the complexity analyses
in~\cite[Section~4.4]{levitt2024linear}
and~\cite[Section~4.5.1]{yesypenko2026randomized}.
\qed
\end{proof}
\section{Adaptive-Rank Matrix-Free HSS Factorization} \label{s:adaptive-rank}

\Cref{s:fixed-rank} presents the framework with the HSS rank $k$ assumed
known. A known rank sets the number $p=m+k$ of Gaussian test vectors, the columns of the test matrices $\Omega$ and $\Phi$, in \textsc{ComputeBasis}
(\Cref{alg:u-function}), where $m$ is the leaf size (assumed to be $2k$ in
\Cref{s:fixed-rank}). In applications, however, the off-diagonal ranks are
rarely known \emph{a priori} and can vary across nodes and levels. The central
task of this section is therefore to determine the required value of $p$, and
hence the local HSS rank, from a prescribed tolerance $\epsilon$.

Apart from this rank selection, we retain the framework of
\Cref{s:fixed-rank}: nullify the diagonal block, compress the resulting
off-diagonal sample, recover the compressed diagonal blocks, eliminate the redundant variables,
and recurse up the tree.  Thus, the adaptive method changes how the local
bases $U_i$ and $V_i$ are constructed for every node at every level, but not the subsequent
compression and factorization.
As in \Cref{s:fixed-rank}, the analysis is presented for real matrices,
and $^\top$ denotes transpose. The complex extension described in
\creflem{lemma:tilde} replaces real Gaussian tests by circular complex
Gaussian tests and transposes by conjugate transposes.

For clarity, \Cref{ss:naive-adaptive,s:extend-nullspace-basis,%
ss:build-u-v-from-nullspace}
develop the adaptive construction at the leaf level, where the test
matrices are Gaussian and the error estimates come with a probabilistic guarantee. Each
leaf is tested independently and first obtains a provisional rank, which
may differ among leaves. \Cref{s:rank-refinement} adds a pruning step and
the level-wide rules that convert the provisional ranks into a common rank
per level. \Cref{ss:the-adaptive-rank-algorithm} extends the construction
to coarser levels and gives the complete algorithm and its complexity.

\subsection{Restarted Recomputation Baseline}\label{ss:naive-adaptive}

At a leaf node with $m$ indices, a natural adaptive strategy is to try
ranks $b,2b,\ldots$ by rerunning \textsc{ComputeBasis} with
$p=m+b,m+2b,\ldots$ until the tolerance is met. The test and sample
matrices can be built incrementally, so $J$ trials require only $m+Jb$ matvecs with each of $A$ and $A^\top$. Nevertheless, the nullspace and range bases are
recomputed at every trial. By \creflem{lemma:compute-basis-cost}, the total
cost over $J$ trials is
\begin{align*}
  \sum_{j=1}^{J}\mathcal O((m+jb)^3)
  &=\mathcal O\!\left(Jm^3+J^2m^2b+J^3mb^2+J^4b^3\right)\\
  &=\mathcal O(Jk^3)=\mathcal O(k^4/b),
  \qquad k=Jb,\quad m=\mathcal O(k),
\end{align*}
Our alternative, developed in \Cref{s:extend-nullspace-basis}, instead
retains both bases and extends the nullspace basis by one block per trial
with a Householder row update; we refer to this approach simply as the
\emph{row update}. Its total
cost is $\mathcal O(k^3)$, the same order as one fixed-rank construction
(\creflem{lemma:compute-basis-cost}); see \creflem{lemma:null-space-costs}.
Thus, the row update saves a factor of $J$ in local work while using the
same samples. Restarted recomputation
remains a useful stability reference because every trial uses a fresh QR
factorization.

The remainder of this section describes the nullspace update, the
corresponding extension of the basis computed on line~\ref{line:compute-basis} of
\Cref{alg:u-function}, and the rank-pruning step that follows it. For brevity, we describe the
construction of $U_i$ from $Y$ and $\Omega$. The construction of $V_i$
from $Z$ and $\Phi$ is analogous.

\subsection{Extending the Nullspace Basis}\label{s:extend-nullspace-basis}

This subsection extends line~\ref{line:compute-nullspace} of
\Cref{alg:u-function} to the adaptive setting. When the rank is known, that
line computes $Q_i$ from a test matrix with a prescribed number of columns.
Here, we extend both the test matrix and its nullspace basis one batch at a
time until the rank is determined.

The update itself is classical. It appends rows to a QR
factorization~\cite{daniel1976reorthogonalization,golub2013matrix,hammarling2008updating},
and active-set methods for constrained optimization maintain null-space
bases through such updates~\cite{gill1974methods}. What it provides here is
specific to block nullification. Appending a batch of test columns keeps
the current nullspace basis exactly and adds $b$ new directions
(\creflem{lemma:retained-basis-preserved}). The samples already formed
from the old basis therefore stay valid, and the new directions give a
fresh Gaussian sample of the off-diagonal block
(\Cref{t:psi-gaussian-iid}). Each rank trial costs only the new batch, and
the local work over all trials matches one fixed-rank construction. Earlier
block-nullification
methods~\cite{levitt2024linear,yesypenko2026randomized,amsel2026quasioptimal}
fix the number of test columns in advance. To our knowledge, no earlier
method makes block nullification adaptive.

We describe the construction for a representative leaf node $i$, whose
local test matrix $\Omega_i$ has $m$ rows.  Because every quantity in this subsection
belongs to this node, we omit the subscript $i$. We write ``old,'' ``add,''
and ``new'' for the retained, appended, and updated quantities; for example,
$Y_{\mathrm{new}}=A\Omega_{\mathrm{new}}=\begin{bmatrix}Y_{\mathrm{old}} & Y_{\mathrm{add}}\end{bmatrix}$.  We begin with a square
test block $\Omega_0\in\mathbb{R}^{m\times m}$.  Before a subsequent
one-batch update, let
\[
  \Omega_{\mathrm{old}} \in\mathbb{R}^{m\times p}, \qquad Q_{\mathrm{old}} \in\mathbb{R}^{p\times(p-m)}
\]
denote the retained test matrix and its nullspace basis, so that $p-m$ is the current trial rank. We draw a new block
$\Omega_{\mathrm{add}}\in\mathbb{R}^{m\times b}$ and set
\[
  \Omega_{\mathrm{new}}
  \equiv\begin{bmatrix}\Omega_{\mathrm{old}} & \Omega_{\mathrm{add}}\end{bmatrix},
  \qquad
  \Omega_{\mathrm{new}}\in\mathbb R^{m\times(p+b)}.
\]
The new trial rank is therefore $p-m+b$. We retain the old basis, padded
with zero rows, and append the new directions,
\begin{equation}\label{eq:nullspace-update-structure}
  Q_{\mathrm{old}}^{\uparrow}
  \equiv
  \begin{bmatrix}Q_{\mathrm{old}}\\ 0_{b\times(p-m)}\end{bmatrix},
  \qquad
  Q_{\mathrm{new}}
  \equiv
  \begin{bmatrix} Q_{\mathrm{old}}^{\uparrow} & Q_{\mathrm{add}}\end{bmatrix}.
\end{equation}
Here, $Q_{\mathrm{add}}$ has $b$ orthonormal columns in
$\ker(\Omega_{\mathrm{new}})$ and is orthogonal to the retained basis.
On the first update,
$\Omega_{\mathrm{old}}=\Omega_0$ and $p=m$, so $Q_{\mathrm{old}}$ has no columns.  We construct $Q_{\mathrm{add}}$ by an orthogonal update, which keeps every
transformation of the compression stage orthogonal.
The nullspace we need is available inside a QR factorization.
Write the full QR factorization of the transposed test matrix as
\begin{equation}\label{eq:omega-transpose-qr}
  \Omega_{\mathrm{old}}^\top
  =\mathcal Q\begin{bmatrix}R\\0\end{bmatrix},
  \qquad
  \mathcal Q=\begin{bmatrix}\mathcal Q_1 & \mathcal Q_2\end{bmatrix}
  \in\mathbb R^{p\times p},
  \qquad
  R\in\mathbb R^{m\times m},
\end{equation}
with $\mathcal Q_1\in\mathbb R^{p\times m}$. Then $\Omega_{\mathrm{old}}\mathcal Q_2=0$, so the trailing $p-m$ columns of
$\mathcal Q$ are exactly the nullspace basis $Q_{\mathrm{old}}$ that we need.

The $b$ new columns of $\Omega_{\mathrm{new}}$ become $b$ new rows of its
transpose $\Omega_{\mathrm{new}}^\top$. With $B\equiv\Omega_{\mathrm{add}}^\top\in
\mathbb R^{b\times m}$, \Cref{eq:omega-transpose-qr} gives
\begin{equation}\label{eq:row-update-stack}
  \Omega_{\mathrm{new}}^\top
  =\begin{bmatrix}\Omega_{\mathrm{old}}^\top\\ B\end{bmatrix}
  =\begin{bmatrix}\mathcal Q&0\\0&I_b\end{bmatrix}
   \begin{bmatrix}R\\ 0_{(p-m)\times m}\\ B\end{bmatrix}.
\end{equation}
Updating a QR factorization after rows are appended is
classical~\cite{daniel1976reorthogonalization,golub2013matrix}: $m$
Householder reflectors $G_1,\dots,G_m$ reduce the stacked matrix in
\Cref{eq:row-update-stack} to upper triangular form. Write $R^{(j)}$ and
$B^{(j)}$ for the values of $R$ and $B$ after $G_1,\dots,G_{j-1}$ have been
applied, with $R^{(1)}=R$ and $B^{(1)}=B$. For $j=1,\dots,m$, let
\[
  S_j\equiv\{j\}\cup\{p+1,\dots,p+b\},
  \qquad
  g_j\equiv\begin{bmatrix}R^{(j)}_{jj}\\ B^{(j)}(:,j)\end{bmatrix}
  \in\mathbb R^{b+1},
\]
and let $G_j\in\mathbb R^{(p+b)\times(p+b)}$ act on the coordinates in
$S_j$ as the $(b+1)\times(b+1)$ Householder reflector that maps $g_j$ to a
multiple of $e_1$, and as the identity on all other coordinates. The reflector $G_j$ annihilates column $j$ of
$B^{(j)}$ into the pivot $R^{(j)}_{jj}$, and it modifies only row $j$
of $R^{(j)}$ and the rows of $B^{(j)}$; the zero block has no row in any
$S_j$ and remains unchanged. The reflectors give
\begin{equation}\label{eq:row-update}
\begin{aligned}
  \begin{bmatrix}R\\ 0_{(p-m)\times m}\\ B\end{bmatrix}
  &=G_1\cdots G_m
  \begin{bmatrix}R_{\mathrm{new}}\\ 0_{(p-m+b)\times m}\end{bmatrix},
  \\
  \Omega_{\mathrm{new}}^\top
  &=\underbrace{\begin{bmatrix}\mathcal Q&0\\0&I_b\end{bmatrix}
   G_1\cdots G_m}_{\mathcal Q_{\mathrm{new}}}
  \begin{bmatrix}R_{\mathrm{new}}\\0\end{bmatrix},
\end{aligned}
\end{equation}
a full QR factorization of $\Omega_{\mathrm{new}}^\top$ with
$R_{\mathrm{new}}\in\mathbb R^{m\times m}$. The trailing $p-m+b$ columns
of $\mathcal Q_{\mathrm{new}}$ span $\ker(\Omega_{\mathrm{new}})$ and
form $Q_{\mathrm{new}}$.

The next lemma shows that the structure of these reflectors preserves the
retained basis.

\begin{lemma}\label{lemma:retained-basis-preserved}
For $m<j\le p$, the $j$th column of
$\mathcal Q_{\mathrm{new}}$ equals the $j$th column of $\mathcal Q$ padded
with $b$ zeros, so the update reproduces
\[
  Q_{\mathrm{new}}
  =\begin{bmatrix} Q_{\mathrm{old}}^{\uparrow} & Q_{\mathrm{add}}\end{bmatrix}
\]
with $Q_{\mathrm{old}}^{\uparrow}$ exactly as in
\Cref{eq:nullspace-update-structure}.
\end{lemma}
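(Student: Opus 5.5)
The plan is to track how the product $\mathcal Q_{\mathrm{new}}=\operatorname{diag}(\mathcal Q,I_b)\,G_1\cdots G_m$ acts on a standard basis vector $e_j$ of $\mathbb R^{p+b}$ with $m<j\le p$. The $j$th column of $\mathcal Q_{\mathrm{new}}$ is $\operatorname{diag}(\mathcal Q,I_b)\,G_1\cdots G_m e_j$. So it suffices to show that $G_1\cdots G_m e_j=e_j$. Once that holds, the column is $\operatorname{diag}(\mathcal Q,I_b)e_j=\begin{bmatrix}\mathcal Q(:,j)\\0_b\end{bmatrix}$, which is the $j$th column of $\mathcal Q$ padded with $b$ zeros.

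The key step is the support structure of the reflectors recorded just before the lemma. Each $G_\ell$ acts as the identity outside the coordinate set $S_\ell=\{\ell\}\cup\{p+1,\dots,p+b\}$. For $m<j\le p$ we have $j\ne\ell$ for every $\ell\le m$, and $j\notin\{p+1,\dots,p+b\}$. Hence $j\notin S_\ell$ for all $\ell=1,\dots,m$, and each $G_\ell$ fixes $e_j$. Applying $G_m$ first and then $G_{m-1}$ down to $G_1$, each factor fixes $e_j$, so the product does too.

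It then remains to identify $Q_{\mathrm{new}}$. By the discussion after \Cref{eq:row-update}, $Q_{\mathrm{new}}$ consists of the trailing $p-m+b$ columns of $\mathcal Q_{\mathrm{new}}$, namely columns $m+1,\dots,p+b$. The first $p-m$ of these are columns $m+1,\dots,p$. By the previous step they equal $\mathcal Q_2=Q_{\mathrm{old}}$ padded with $b$ zero rows, which is $Q_{\mathrm{old}}^{\uparrow}$ from \Cref{eq:nullspace-update-structure}.

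The remaining $b$ columns, indices $p+1,\dots,p+b$, define $Q_{\mathrm{add}}$. They are orthonormal and orthogonal to $Q_{\mathrm{old}}^{\uparrow}$ because $\mathcal Q_{\mathrm{new}}$ is orthogonal, being a product of orthogonal factors. They lie in $\ker(\Omega_{\mathrm{new}})$ because \Cref{eq:row-update} is a full QR factorization of $\Omega_{\mathrm{new}}^\top$ with zero trailing block, so $\Omega_{\mathrm{new}}\mathcal Q_{\mathrm{new}}=\begin{bmatrix}R_{\mathrm{new}}^\top & 0\end{bmatrix}$.

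There is no real obstacle in this argument. The one point needing care is that the reflector supports $S_\ell$ never contain the indices $m+1,\dots,p$. This holds because the zero block $0_{(p-m)\times m}$ occupies exactly those rows and is never touched, which is what the construction of the $G_\ell$ ensures. Because the argument uses only these index sets, the preservation is exact in exact arithmetic. In floating point the retained columns are also left untouched, which gives the working-precision orthogonality claimed earlier.
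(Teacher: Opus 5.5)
Your proposal is correct and follows the paper's proof essentially step for step. Both arguments show that $j\notin S_i$ for every $i\le m$, so $G_1\cdots G_m e_j=e_j$, and then read off column $j$ of $\diag(\mathcal Q,I_b)$. Both then identify the trailing $b$ columns as $Q_{\mathrm{add}}$ from the orthogonality of $\mathcal Q_{\mathrm{new}}$. Your explicit check that $\Omega_{\mathrm{new}}\mathcal Q_{\mathrm{new}}=\begin{bmatrix}R_{\mathrm{new}}^\top & 0\end{bmatrix}$, which gives kernel membership, is a small addition the paper only asserts.
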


\begin{proof}
Fix $m<j\le p$ and let $e_j\in\mathbb R^{p+b}$ be the $j$th unit vector.
For every $i\le m$, the index $j$ lies outside $S_i$, because
$i\le m<j\le p<p+1$. Hence $G_ie_j=e_j$ for every $i$, and
$G_1\cdots G_me_j=e_j$. By \Cref{eq:row-update},
$\mathcal Q_{\mathrm{new}}e_j
=\begin{bmatrix}\mathcal Q&0\\0&I_b\end{bmatrix}e_j$, which is the $j$th
column of $\mathcal Q$ padded with $b$ zeros. For $m<j\le p$, these columns
of $\mathcal Q$ are the columns of $Q_{\mathrm{old}}$ by
\Cref{eq:omega-transpose-qr}, and padding them gives
$Q_{\mathrm{old}}^{\uparrow}$.
The remaining columns $p+1,\dots,p+b$ of $\mathcal Q_{\mathrm{new}}$ lie in
$\ker(\Omega_{\mathrm{new}})$ and are orthogonal to
$Q_{\mathrm{old}}^{\uparrow}$, because $\mathcal Q_{\mathrm{new}}$ is
orthogonal; they form $Q_{\mathrm{add}}$.
\qed
\end{proof}

The padded structure of \Cref{eq:nullspace-update-structure} thus comes
from the update itself, not from orthogonalizing against the retained
basis. The retained columns are never recomputed, reorthogonalized, or
passed to the update; \Cref{alg:nullspace-row-update} forms only
$Q_{\mathrm{add}}$.

\begin{algorithm}[H]
\caption{One-Batch Nullspace Extension by an Orthogonal Row Update}
\label{alg:nullspace-row-update}
\begin{algorithmic}[1]
\Require Triangular factor $R\in\mathbb R^{m\times m}$ and range factor
$\mathcal Q_1\in\mathbb R^{p\times m}$ of $\Omega_{\mathrm{old}}^\top$,
appended block $\Omega_{\mathrm{add}}\in\mathbb R^{m\times b}$
\Ensure Appended nullspace block $Q_{\mathrm{add}}$, updated $R$ and
$\mathcal Q_1$
\Function{ExtendNullspace}{$R,\mathcal Q_1,\Omega_{\mathrm{add}}$}
\State $B\gets\Omega_{\mathrm{add}}^\top$,\quad
       $C\gets\begin{bmatrix}\mathcal Q_1 & 0_{p\times b}\\ 0_{b\times m} & I_b\end{bmatrix}$
\For{$j=1,\dots,m$}
    \State build the Householder reflector $G_j$ from $\begin{bmatrix} R_{jj}\\ B(:,j)\end{bmatrix}$
    \Comment{\gray{$\triangleright$~acts on $S_j$}}
    \State apply $G_j$ on the left to rows $\{j\}\cup\{m+1,\dots,m+b\}$
           of $\begin{bmatrix} R\\ B\end{bmatrix}$
    \Comment{\gray{$\triangleright$~the rows $S_j$ in \Cref{eq:row-update}}}
    \State apply $G_j$ on the right to columns $\{j\}\cup\{m+1,\dots,m+b\}$
           of $C$
    \Comment{\gray{$\triangleright$~the columns $S_j$ of the left factor in \Cref{eq:row-update-stack}}}
\EndFor
\State \Return $Q_{\mathrm{add}}\gets C(:,m+1:m+b)$,\quad
       $R$,\quad $\mathcal Q_1\gets C(:,1:m)$ 
\EndFunction
\end{algorithmic}
\end{algorithm}

\begin{lemma}\label{lemma:null-space-costs}
Let $m$ be the node size, $b$ the batch size, and $J$ the number of
batches. Over all $J$ batches, the nullspace extension at one node costs
\[
\mathcal{O}(m^3+Jm^2b+J^2mb^2)
=\mathcal{O}\bigl((m+Jb)^3\bigr).
\]
\end{lemma}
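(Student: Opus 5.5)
The plan is to split the cost into the one-time initial factorization of the square block $\Omega_0$ and the $J$ row updates. Then we bound each update by counting flops in \Cref{alg:nullspace-row-update}, and sum over $j=1,\dots,J$ using the fact that the number of columns $p$ grows linearly in the batch index.

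\emph{Initial block.} The first step needs a full QR factorization of $\Omega_0^\top\in\mathbb R^{m\times m}$. This gives $R$ and $\mathcal Q_1=\mathcal Q$ at a cost of $\mathcal O(m^3)$ flops~\cite[Section~5.2]{golub2013matrix}. It contributes the $m^3$ term.

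\emph{One update.} Before the $j$th batch, $p=m+(j-1)b$. The loop in \Cref{alg:nullspace-row-update} runs over $m$ reflectors, each of length $b+1$, and each reflector incurs three costs.
\begin{itemize}
\item Building $G_\ell$ from a vector of length $b+1$ costs $\mathcal O(b)$.
\item Applying $G_\ell$ on the left to rows $S_\ell$ of $\begin{bmatrix}R\\ B\end{bmatrix}$ touches $b+1$ rows. Only the columns $\ell,\dots,m$ can be nonzero in the trailing part, since earlier columns of $B$ are already annihilated. This step therefore costs $\mathcal O(bm)$.
\item Applying $G_\ell$ on the right to columns $S_\ell$ of $C$ touches $b+1$ columns of length $p+b$. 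This costs $\mathcal O(b(p+b))$.
\end{itemize}
Summing over the $m$ reflectors gives a per-batch cost of $\mathcal O(m^2b+mb(p+b))$. Substituting $p+b=m+jb$, this is $\mathcal O(m^2b+mb(m+jb))=\mathcal O(m^2b+jmb^2)$. Forming $C$ itself with zero padding costs $\mathcal O((p+b)(m+b))$, which is dominated by the reflector work as long as we store $C$ as a dense $(p+b)\times(m+b)$ array.

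\emph{Summation.} Adding the batches gives
\[
\sum_{j=1}^J \mathcal O(m^2b+jmb^2)=\mathcal O(Jm^2b+J^2mb^2).
\]
Together with the initial $\mathcal O(m^3)$ term, this yields the stated bound. The equality with $\mathcal O((m+Jb)^3)$ then follows because each of $m^3$, $Jm^2b$ and $J^2mb^2$ appears in the binomial expansion of $(m+Jb)^3$ with a nonnegative coefficient. The inequality in the other direction is the only place with any content: the missing term $J^3b^3$ is not covered by the three terms on the left. So the intended reading should be ``$\subseteq$'', with equality holding when $Jb=\mathcal O(m)$, as in the regime $m=\mathcal O(k)$, $k=Jb$.

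\emph{Main obstacle.} The only delicate point is the right application to $C$. Its column length grows with $j$, which is exactly what produces the quadratic-in-$J$ term. We must check that the retained columns $m+1,\dots,p$ of $C$ are never touched; this is guaranteed by \creflem{lemma:retained-basis-preserved}. If they were touched, each update would cost $\mathcal O(p^2 b)$ instead, and the sum would pick up a $J^3$ term.
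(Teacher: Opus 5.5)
Your argument is correct and matches the paper's proof: $\mathcal O(m^3)$ for the initial QR of $\Omega_0^\top$, then per batch $\mathcal O(m^2b)$ for the left reflector applications to $\begin{bmatrix}R\\ B\end{bmatrix}$ and $\mathcal O(mb(p+b))=\mathcal O(m^2b+jmb^2)$ for the right applications to $C$, summed over $j$; reading the final ``$=$'' as the inclusion ``$\subseteq$'' is also right, since that is the only direction the paper uses. One small correction to your last paragraph: the $(p+b)\times(m+b)$ matrix $C$ holds only $\mathcal Q_1$ and the $b$ new columns and never the retained nullspace columns, so the real point is that \creflem{lemma:retained-basis-preserved} justifies not carrying $\mathcal Q_2$ at all, rather than that the reflectors acting on $S_j$ might touch those columns.
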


\begin{proof}
The full QR factorization of the square initial block $\Omega_0^\top$ costs
$\mathcal{O}(m^3)$. At batch $t$, the test matrix has $p=m+(t-1)b$
columns, and \Cref{alg:nullspace-row-update} applies $m$ reflectors, each
acting on $b+1$ rows or columns. Per batch, the left applications to
$\begin{bmatrix} R\\ B\end{bmatrix}$ cost $\mathcal{O}(m^2b)$, and the right
applications to the $(p+b)\times(m+b)$ matrix $C$ cost
$\mathcal{O}(pmb)=\mathcal{O}(m^2b+tmb^2)$. Summing over the $J$ batches
and adding the initial factorization gives the bound.
\qed
\end{proof}

For $k=Jb$ and $m=\mathcal O(k)$, the bound is $\mathcal O(k^3)$, the same
order as one fixed-rank construction at rank $k$
(\creflem{lemma:compute-basis-cost}). Because every operation in
\Cref{alg:nullspace-row-update} is an orthogonal transformation, the computed
$\mathcal Q_{\mathrm{new}}$ is orthogonal to working precision, and the
nullspace residual satisfies
$\|\Omega_{\mathrm{new}}Q_{\mathrm{new}}\|=\mathcal O(u) \|\Omega_{\mathrm{new}}\|$, where $u$ is the unit roundoff, independently of $\kappa(\Omega_0)$.

In the complex case, $\Omega^\top$ is replaced by $\Omega^*$ throughout,
and the orthogonal transformations become unitary.

\subsection{Building the Local Basis $U_i$}\label{ss:build-u-v-from-nullspace}

The goal is a local basis $U_i$ satisfying
\begin{equation}\label{eq:error-stopping-criterion}
  \|(I-U_iU_i^\top)A_{ii^c}\|_{\mathrm{F}}
  \leq\epsilon\,\|A_{ii^c}\|_{\mathrm{F}}.
\end{equation}
This subsection extends line~\ref{line:compute-basis} of
\Cref{alg:u-function} to the adaptive setting.  When the rank is known, that line constructs the
complete orthogonal matrix $U_i$ from the single sample $Y_iQ_i$. In the
adaptive setting, each nullspace extension produces a new sample block, and we grow
the leading columns of $U_i$ from these blocks until \Cref{eq:error-stopping-criterion} is met. A full QR
factorization then completes these columns to a square orthogonal matrix. As in
\Cref{s:extend-nullspace-basis}, we consider a leaf node $i$.
This subsection describes the basic single-node procedure;
\Cref{s:rank-refinement} adds the pruning step and the level-wide rules
that coordinate the nodes of a level.

The nullspace update produces the new off-diagonal sample without forming
any block of $A$.  Partitioning the columns of $A$ into $\mathcal I_i$ and
$\mathcal I_i^c$, as in \Cref{e:leaf-decomp} and \Cref{f:diag-off_diag-A}, gives
\[
  Y_{i,\mathrm{new}}
  =A_{ii}\Omega_{i,\mathrm{new}}
   +A_{ii^c}\Omega_{i^c,\mathrm{new}},
\]
in which neither $A_{ii}$ nor $A_{ii^c}$ is formed.
Multiplication by $Q_{i,\mathrm{new}}$ annihilates the diagonal term and
gives
\begin{equation}\label{eq:psi-shows-up}
  Y_{i,\mathrm{new}}Q_{i,\mathrm{new}}
  =A_{ii^c}
  \begin{bmatrix} \Omega_{i^c,\mathrm{old}}Q_{i,\mathrm{old}} & \Psi_{i,\mathrm{add}}\end{bmatrix},
  \qquad
  \Psi_{i,\mathrm{add}}
  \equiv\Omega_{i^c,\mathrm{new}}Q_{i,\mathrm{add}}.
\end{equation}
Consequently, the appended nullspace block $Q_{i,\mathrm{add}}$ supplies the fresh sample
\[
  G_{i,\mathrm{add}}
  \equiv A_{ii^c}\Psi_{i,\mathrm{add}}
  =Y_{i,\mathrm{new}}Q_{i,\mathrm{add}}.
\]

Let $U_{i,\mathrm{old}}$ be the basis retained from the preceding
batches and form
\begin{equation}\label{eq:adaptive-residual-block}
  E_{i,\mathrm{add}}
  \equiv(I-U_{i,\mathrm{old}}U_{i,\mathrm{old}}^\top)
  G_{i,\mathrm{add}}.
\end{equation}
\paragraph{Error estimators.}
For a fresh appended block, \Cref{eq:f-norm-estimate} motivates the
sample residual
\begin{equation}
  \rho_i
  \equiv\frac{1}{\sqrt{b}}
  \|E_{i,\mathrm{add}}\|_{\mathrm{F}},
\end{equation}
which estimates $\|(I-U_{i,\mathrm{old}}U_{i,\mathrm{old}}^\top)
A_{ii^c}\|_{\mathrm{F}}$.  Likewise,
\begin{equation} \label{eq:rhs-norm-estimate}
  \|A_{ii^c}\|_{\mathrm{F}}^2
  =\frac{1}{b}\,\mathbb{E}
  \|A_{ii^c}\Psi_{i,\mathrm{add}}\|_{\mathrm{F}}^2
  =\frac{1}{b}\,\mathbb{E}
  \|G_{i,\mathrm{add}}\|_{\mathrm{F}}^2,
\end{equation}
so the threshold
\[
  \eta_i\equiv\frac{\epsilon}{\sqrt b}\,\|G_{i,\mathrm{add}}\|_{\mathrm{F}}
\]
estimates the right-hand side of \Cref{eq:error-stopping-criterion}. The
practical stopping test is
\begin{equation}\label{eq:adaptive-practical-test}
  \rho_i\leq\eta_i.
\end{equation}

If the stopping test in \Cref{eq:adaptive-practical-test} fails, we
orthonormalize the residual block and append the result:
\begin{equation} \label{eq:ui_added}
  U_{i,\mathrm{add}}
  =\texttt{orth}(E_{i,\mathrm{add}}),
  \qquad
  U_{i,\mathrm{new}}
  \equiv\begin{bmatrix} U_{i,\mathrm{old}} & U_{i,\mathrm{add}}\end{bmatrix}.
\end{equation}
The next test then uses a fresh batch. If the test passes, no columns are added and
$U_{i,\mathrm{new}}=U_{i,\mathrm{old}}$.  This convention also covers
the first batch by taking $U_{i,\mathrm{old}}$ to be empty.

The estimates $\rho_i$ and $\eta_i$ use only samples that the basis
construction already produces. Every batch estimates the same norm
$\|A_{ii^c}\|_{\mathrm{F}}$, so our implementation lowers the variance of
$\eta_i$ by averaging $\|G_{i,\mathrm{add}}\|_{\mathrm{F}}^2$ over all
completed batches of the node. For simplicity, we keep the definition of $\eta_i$ above.

\paragraph{Justification of the error estimates.}\label{s:checking-errors}

Since $A_{ii^c}$ is never formed explicitly, both sides of
\Cref{eq:error-stopping-criterion} are estimated
by \Cref{eq:f-norm-estimate}. The
estimator is unbiased for any isotropic test matrix that is independent of
the block it tests, and Gaussian test matrices also give its concentration
bounds. The basis construction already produces such a matrix: by the
following theorem, $\Psi_{i,\mathrm{add}}$ is standard Gaussian and
independent of $U_{i,\mathrm{old}}$.

\begin{theorem}\label{t:psi-gaussian-iid}
Suppose $\Omega_{i^c,\mathrm{new}}$ is a standard real Gaussian matrix
independent of $\Omega_{i,\mathrm{new}}$. Then
$\Omega_{i^c,\mathrm{new}}Q_{i,\mathrm{new}}
=\begin{bmatrix}\Omega_{i^c,\mathrm{old}}Q_{i,\mathrm{old}} &
\Psi_{i,\mathrm{add}}\end{bmatrix}$ in \Cref{eq:psi-shows-up} is standard
real Gaussian. In particular, $\Psi_{i,\mathrm{add}}$ is standard real
Gaussian and independent of the earlier sample blocks, and hence of
$U_{i,\mathrm{old}}$.
\end{theorem}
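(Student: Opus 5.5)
We condition on $\Omega_{i,\mathrm{new}}$. The basis $Q_{i,\mathrm{new}}$ is computed by \Cref{alg:nullspace-row-update} from $\Omega_{i,\mathrm{new}}$ alone: the initial QR factorization of $\Omega_0^\top$ and the Householder reflectors built from the appended blocks $\Omega_{i,\mathrm{add}}$. It is therefore a measurable function of $\Omega_{i,\mathrm{new}}$, and by hypothesis it is independent of $\Omega_{i^c,\mathrm{new}}$. Conditional on $\Omega_{i,\mathrm{new}}$, the matrix $Q_{i,\mathrm{new}}\in\mathbb R^{(p+b)\times(p-m+b)}$ is a fixed matrix with orthonormal columns, by \creflem{lemma:retained-basis-preserved} and the orthogonality of $\mathcal Q_{\mathrm{new}}$. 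The first step is then the standard fact that, for $G$ with i.i.d.\ $\mathcal N(0,1)$ entries and fixed $Q$ with $Q^\top Q=I$, the product $GQ$ has i.i.d.\ $\mathcal N(0,1)$ entries. To verify it, note that each row $g^\top$ of $G$ is $\mathcal N(0,I)$, so $Q^\top g\sim\mathcal N(0,Q^\top Q)=\mathcal N(0,I)$, and distinct rows remain independent. Hence, conditionally on $\Omega_{i,\mathrm{new}}$, the product $\Omega_{i^c,\mathrm{new}}Q_{i,\mathrm{new}}$ is standard Gaussian. Since this conditional law does not depend on the conditioning value, the product is standard Gaussian unconditionally and is independent of $\Omega_{i,\mathrm{new}}$.

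Next we identify the block columns. By \creflem{lemma:retained-basis-preserved}, $Q_{i,\mathrm{new}}=[\,Q_{i,\mathrm{old}}^{\uparrow}\;Q_{i,\mathrm{add}}\,]$, and $Q_{i,\mathrm{old}}^{\uparrow}$ is zero on the appended $b$ rows. It follows that $\Omega_{i^c,\mathrm{new}}Q_{i,\mathrm{old}}^{\uparrow}=\Omega_{i^c,\mathrm{old}}Q_{i,\mathrm{old}}$, which gives the displayed partition in \Cref{eq:psi-shows-up}. The columns of a standard Gaussian matrix are independent, so $\Psi_{i,\mathrm{add}}$ is standard Gaussian and independent of $\Omega_{i^c,\mathrm{old}}Q_{i,\mathrm{old}}$. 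The conditioning argument also shows that the pair is jointly independent of $\Omega_{i,\mathrm{new}}$. The same argument applied at every earlier batch shows that the earlier sketch blocks $\Omega_{i^c}Q$ are sub-blocks of $\Omega_{i^c,\mathrm{old}}Q_{i,\mathrm{old}}$. Indeed, the retained columns are preserved exactly, and each earlier test block consists of the first columns of $\Omega_{i^c,\mathrm{old}}$.

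Finally, we deduce independence from $U_{i,\mathrm{old}}$. The basis $U_{i,\mathrm{old}}$ is built by \Cref{eq:ui_added} from the earlier samples $A_{ii^c}\Omega_{i^c,\mathrm{old}}Q_{i,\mathrm{old}}$ through repeated \texttt{orth} operations and stopping decisions. It is therefore a measurable function of the pair $(\Omega_{i,\mathrm{old}},\,\Omega_{i^c,\mathrm{old}}Q_{i,\mathrm{old}})$, with $A$ deterministic. We have shown that $\Psi_{i,\mathrm{add}}$ is independent of $\Omega_{i^c,\mathrm{old}}Q_{i,\mathrm{old}}$ and of $\Omega_{i,\mathrm{new}}$, and hence of $\Omega_{i,\mathrm{old}}$. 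The main obstacle is this last step: pairwise independence is not enough, so we need $\Psi_{i,\mathrm{add}}$ to be independent of the \emph{pair} jointly. We obtain this from the conditional statement. Given $\Omega_{i,\mathrm{new}}$, the full matrix $[\,\Omega_{i^c,\mathrm{old}}Q_{i,\mathrm{old}}\;\Psi_{i,\mathrm{add}}\,]$ is standard Gaussian, so its two column blocks are conditionally independent with a law that does not depend on $\Omega_{i,\mathrm{new}}$. This yields independence of $\Psi_{i,\mathrm{add}}$ from $(\Omega_{i,\mathrm{new}},\Omega_{i^c,\mathrm{old}}Q_{i,\mathrm{old}})$, and therefore from $U_{i,\mathrm{old}}$.
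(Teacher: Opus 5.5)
Your proof is correct and follows the paper's argument. You use that $Q_{i,\mathrm{new}}$ depends only on $\Omega_{i,\mathrm{new}}$, orthogonal invariance makes $\Omega_{i^c,\mathrm{new}}Q_{i,\mathrm{new}}$ standard Gaussian, its two column blocks are independent, and $U_{i,\mathrm{old}}$ is determined by the old block. Your explicit conditioning on $\Omega_{i,\mathrm{new}}$, which gives joint independence of $\Psi_{i,\mathrm{add}}$ from $(\Omega_{i,\mathrm{new}},\Omega_{i^c,\mathrm{old}}Q_{i,\mathrm{old}})$, is only a slightly more careful rendering of the paper's zero-cross-covariance step and its remark that $U_{i,\mathrm{old}}$ depends on the old sample alone.
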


\begin{proof}
The matrix $Q_{i,\mathrm{new}}$ has orthonormal columns and is constructed solely
from $\Omega_{i,\mathrm{new}}$. It is therefore independent of
$\Omega_{i^c,\mathrm{new}}$. Right multiplication of an independent
standard Gaussian matrix by a matrix with orthonormal columns preserves
its distribution, applied here to $\Omega_{i^c,\mathrm{new}}Q_{i,\mathrm{new}}$
as a whole. The two blocks
$\Omega_{i^c,\mathrm{new}}Q_{i,\mathrm{old}}^{\uparrow}$ and
$\Psi_{i,\mathrm{add}}=\Omega_{i^c,\mathrm{new}}Q_{i,\mathrm{add}}$ of this
Gaussian matrix are therefore jointly Gaussian, and their cross-covariance
is proportional to
$(Q_{i,\mathrm{old}}^{\uparrow})^\top Q_{i,\mathrm{add}}=0$, since
$Q_{i,\mathrm{old}}^{\uparrow}$ and $Q_{i,\mathrm{add}}$ are the two
column blocks of $Q_{i,\mathrm{new}}$, which has orthonormal columns.
Uncorrelated jointly Gaussian
vectors are independent, so
$\Omega_{i^c,\mathrm{new}}Q_{i,\mathrm{old}}^{\uparrow}$ and
$\Psi_{i,\mathrm{add}}$ are independent. Since $U_{i,\mathrm{old}}$ is a
function of $A_{ii^c}\Omega_{i^c,\mathrm{old}}Q_{i,\mathrm{old}}
=A_{ii^c}\Omega_{i^c,\mathrm{new}}Q_{i,\mathrm{old}}^{\uparrow}$ alone, it
is independent of $\Psi_{i,\mathrm{add}}$.
\qed
\end{proof}

The hypothesis of \Cref{t:psi-gaussian-iid} holds at the leaf level, where
the test matrix consists of the Gaussian columns drawn at the start of
\Cref{alg:adaptive-rank} and the fresh batches appended to them.
\Cref{ss:the-adaptive-rank-algorithm} discusses the coarser levels.

\subsection{Rank Pruning and Level-Wide Rank Selection}\label{s:rank-refinement}

The block size $b$ trades efficiency against rank resolution. A large
block makes the matvecs and the local updates more efficient, because they
act on $b$ vectors at once and can use level-3 BLAS operations. It also lowers the variance of the
Frobenius-norm estimates in \Cref{eq:adaptive-practical-test}. A failed
test, however, appends $b$ directions at once, so the accepted rank can
overshoot the numerical rank by up to $b-1$. We therefore use a relatively
large block size and add a lightweight pruning step that removes the
redundant directions from the last appended block.

For the basic single-node procedure of
\Cref{ss:build-u-v-from-nullspace}, the pruning step uses a look-ahead
sample. When the stopping test passes, the current sample
$G_{i,\mathrm{add}}$ was not used to build $U_i$, and at the leaf level
\Cref{t:psi-gaussian-iid} shows that its test matrix
$\Psi_{i,\mathrm{add}}$ is Gaussian and independent of $U_i$. We call $G_{i,\mathrm{add}}$ the \emph{look-ahead sample}. With it, each direction
appended by the last growth step can be tested individually, and those
that do not contribute to the rank are discarded. We adapt the
sort-and-prune strategy of our earlier
work~\cite[Algorithm~3.2]{smith2026adaptive} to the hierarchical setting.
We first describe pruning in this look-ahead setting. The level-wide
rules below explain the modification when the freeze margin also uses a
passing sample for growth; \Cref{ss:the-adaptive-rank-algorithm} discusses
the statistical limitations at coarser levels.

Let $U_i\in\mathbb R^{m\times k}$ be the accepted basis and let
$U_{i,\mathrm{last}}\in\mathbb R^{m\times b}$ be its last $b$ columns,
appended by the last growth step. For an index set
$\mathcal R\subseteq\{1,\ldots,b\}$ of candidate directions to remove, let
$U_{i,\mathrm{keep}}$ be $U_i$ without the columns
$U_{i,\mathrm{last}}(:,\mathcal R)$. Since the columns of $U_i$ are
orthonormal, the Pythagorean identity gives
\begin{equation}\label{eq:rank-refine-block-q}
\begin{aligned}
&\tfrac{1}{b}\bigl\|(I-U_{i,\mathrm{keep}}U_{i,\mathrm{keep}}^\top)
G_{i,\mathrm{add}}\bigr\|_{\mathrm{F}}^2\\
&\qquad =\underbrace{\tfrac{1}{b}\bigl\|(I-U_iU_i^\top)
G_{i,\mathrm{add}}\bigr\|_{\mathrm{F}}^2}_{\rho_i^2}
+\tfrac{1}{b}\bigl\|U_{i,\mathrm{last}}(:,\mathcal R)^\top
G_{i,\mathrm{add}}\bigr\|_{\mathrm{F}}^2 .
\end{aligned}
\end{equation}
The left-hand side of \Cref{eq:rank-refine-block-q} is the residual
estimate of the basis pruned by removing $\mathcal R$, so pruning as many
directions as possible while keeping that estimate within the tolerance amounts to
the optimization problem
\[
  \max_{\mathcal R\subseteq\{1,\ldots,b\}}|\mathcal R|
  \quad\text{subject to}\quad
  \rho_i^2+\tfrac{1}{b}\bigl\|U_{i,\mathrm{last}}(:,\mathcal R)^\top
  G_{i,\mathrm{add}}\bigr\|_{\mathrm{F}}^2\leq\eta_i^2 .
\]
The row energies of
\[
T_i\equiv\frac{1}{\sqrt b}
U_{i,\mathrm{last}}^\top G_{i,\mathrm{add}},
\qquad
w_{i,j}^2\equiv\|T_i(j,:)\|_2^2,
\qquad j=1,\ldots,b,
\]
estimate the contribution of each direction: $\mathbb E[w_{i,j}^2]$ is the
exact energy that direction $j$ alone would remove, and $w_{i,j}^2$ is its
 Frobenius-norm estimate from the look-ahead sample. Since
$\tfrac{1}{b}\|U_{i,\mathrm{last}}(:,\mathcal R)^\top
G_{i,\mathrm{add}}\|_{\mathrm{F}}^2=\sum_{j\in\mathcal R}w_{i,j}^2$
exactly, the constraint becomes $\rho_i^2+\sum_{j\in\mathcal R}w_{i,j}^2\leq\eta_i^2$.
Sorting the $w_{i,j}$ in ascending order and taking the longest prefix
$\mathcal R$ that satisfies this constraint solves the optimization:
\begin{equation}\label{eq:rank-refine-budget}
  \rho_i^2+\sum_{j\in\mathcal R}w_{i,j}^2
  \;\leq\;\eta_i^2 .
\end{equation}
Because only the last block is examined, forming the energies costs
$\mathcal O(mb^2)$, followed by an $\mathcal O(b\log b)$ sort; the cost
depends on the block size, not on the retained rank. In our implementation,
the slack $\eta_i^2-\rho_i^2$ in \Cref{eq:rank-refine-budget} is reduced by
a factor $1/L^2$ that depends on the tree depth (\crefrem{rem:heuristics}).

With this step applied at every
level, the factor storage on the low-rank problem of \Cref{ss:block-size}
varies by less than one percent over a 32-fold range of $b$
(\Cref{tab:prune-stability}). Without it, the variation is forty percent.
The block size can therefore be chosen for speed.

\paragraph{Level-wide rules.}
The rules below coordinate the nodes of one level $\ell$, whose nodes
have size $m_\ell$; at the leaves, $\ell=L$ and $m_L=m$. The rules modify
the single-node procedure of \Cref{ss:build-u-v-from-nullspace} and the
pruning step above at every level, including the leaves. Because they
depend on how the tolerance is allocated across levels, we first describe
that allocation.

\paragraph{Error propagation across levels.}
The orthogonal transformations of the compression stage preserve Frobenius
norms, but the complete factorization also contains the triangular
elimination factors of \Cref{ss:x_rr_rs_sr}. A compression error introduced
into a Schur complement can be amplified when it is carried back through
those factors to the original coordinates, and for a nonsymmetric matrix
both the column-space and the row-space errors enter. Thus, the local
tolerance of \Cref{eq:error-stopping-criterion} is not, by itself, an \emph{a priori} bound on the error of the complete representation or on the residual
of a solve with it. A rigorous end-to-end bound would also have to account
for the conditioning of the intervening triangular factors. \Cref{s:result}
reports the resulting solve accuracy; the Supplementary Material, Section~S2,
examines its relation to local amplification factors.
The allocation of the tolerance across levels
in our implementation is therefore a conservative heuristic, described in
\crefrem{rem:heuristics}, and not a global error guarantee.

\begin{remark}[Practical heuristics]\label{rem:heuristics}
The following two tolerance choices in our implementation are heuristics
motivated by this error propagation.

\noindent(i)~\emph{Level-dependent tolerance.} Each coarse-level sample is
assembled from the already-compressed skeletons of the finer levels and
carries their accumulated error, so the coarse-level residual cannot fall
below the error already committed at the finer levels. Demanding the same tolerance at
every level is therefore neither attainable nor worthwhile. We instead
relax the tolerance toward the root and use
\[
  \epsilon_\ell=\epsilon\,\tau^{L-\ell}
\]
in place of $\epsilon$ in $\eta_i$ at level $\ell$, where $L$ is the
number of levels, the leaves are at level $L$, and $\tau\ge1$. The
relaxation also caps the ranks at the coarse levels, where the blocks are
largest and most expensive. In the experiments of \Cref{s:result}, we use
$\tau=\sqrt2$ for 1D problems and $\tau=2$ for 2D problems, reflecting the
faster rank growth in two dimensions.

\noindent(ii)~\emph{Pruning budget.} Pruning at one level raises the error entering
the next level, and the per-level stopping test does not see this
increase. We therefore allow each level to spend only a $1/L$ fraction of
the available tolerance on pruning: in \Cref{eq:rank-refine-budget}, the
slack $\eta_i^2-\rho_i^2$ is replaced by $(\eta_i^2-\rho_i^2)/L^2$, at
every level including the leaves. Here $\eta_i$ is formed with the
unrelaxed $\epsilon$, so that the relaxation in (i) caps coarse-level rank
growth without allowing additional pruning where the accumulated error is
largest.
\end{remark}

\paragraph{Level-wide stopping threshold.}
A purely relative test never passes on a node whose off-diagonal block
is numerically zero, such as a leaf on which the compactly supported
scattering potential of \Cref{app:lippmann} vanishes. There, $\eta_i$ is
$\epsilon$ times roundoff, whereas $\rho_i$ is roundoff that decreases
only as the basis absorbs the noise, one batch of $b$ directions at a
time. Such a node reaches full rank without ever passing. We therefore
give a node a second way to stop. Let
\[
  \bar\eta_\ell\equiv\frac{\epsilon}{\sqrt b}\,
  \Bigl(\frac{1}{n_\ell}\sum_{j}\|G_{j,\mathrm{add}}\|_{\mathrm{F}}^2\Bigr)^{1/2}
\]
be the absolute tolerance of level $\ell$, formed from the mean of $\|G_{j,\mathrm{add}}\|_{\mathrm{F}}^2$ over its
$n_\ell$ nodes. A node also passes the stopping test once
$\rho_i\le\bar\eta_\ell$: what remains of it is negligible at the
scale of its level. In particular, a node whose block is itself
negligible at that scale passes its first test and is stored without
compression. We therefore replace the threshold $\eta_i$ of
\Cref{ss:build-u-v-from-nullspace} and of the pruning step above by $\max(\eta_i,\bar\eta_\ell)$, both for stopping and for pruning.
Since $\sum_i\max(\eta_i,\bar\eta_\ell)^2\le2\sum_i\eta_i^2$, the bound on
the level's total error grows by at most a factor $\sqrt2$. On the
problems of \Cref{s:result}, whose blocks have comparable norms, the rule
changes the selected ranks by at most one unit, and only at isolated levels.
For stopping at level $\ell$, both thresholds use $\epsilon_\ell$ in
place of $\epsilon$; pruning uses the unrelaxed tolerance, as specified in
\crefrem{rem:heuristics}. In the following discussion, $\eta_i$ denotes
the effective threshold $\max(\eta_i,\bar\eta_\ell)$ unless the maximum
is written explicitly.

\paragraph{Freeze margin.}
The level stops when $\rho_i\le\eta_i$ for every node, but a node grows
its basis only while $\rho_i>\gamma\eta_i$, where $\gamma=1/4$. A node
whose estimate has fallen below $\gamma\eta_i$ is frozen: it is still
tested with every later batch but no longer grows. The margin matters
because $\rho_i$ is computed from $b$ Gaussian columns and has a relative
standard error of about $(2b)^{-1/2}$. With a single threshold, a node
whose true error is close to it passes or fails depending on the sample it
draws, and the simultaneous convergence of all nodes at a large level
becomes unlikely. In preliminary runs without the margin, the matvec count on the
one-dimensional Poisson problem grew with $N$ for this reason. With the
margin, it was independent of $N$.

Unlike the basic rule of \Cref{ss:build-u-v-from-nullspace}, this
level-wide rule can grow a basis even when its node passes the stopping
test. If $\rho_i\in(\gamma\eta_i,\eta_i]$, the
passing sample is also used to grow the basis, so it is no longer a
look-ahead sample for that node. In that case, the energies $w_{i,j}$ are
measured on the sample the last block was fitted to, and $\rho_i$ keeps its
value from before the growth, so the left-hand side of
\Cref{eq:rank-refine-budget} counts the energy of the removed directions
twice. This can only make the criterion more conservative: it may prune
fewer directions, but not more.

\paragraph{One rank per level.}
During the rank search, the nodes at level $\ell$ are tested
independently. Node $i$ stops growing at its provisional rank $\hat k_i$,
the width of its accepted basis, and the search ends once every node
passes. Our implementation then stores one rank per level, which keeps the
storage rectangular. Every node is first completed to the width
\[
  k_\ell^{\mathrm{pre}}\equiv\max_i\hat k_i .
\]
A node that stopped early is extended with directions found in the samples
drawn for the nodes that were still growing, excluding the look-ahead
block. If those samples do not supply enough directions, the remaining
columns are taken from the orthogonal complement of its current basis. This completion requires
no additional matvecs. Each node then determines the set $\mathcal R_i$ of
directions that \Cref{eq:rank-refine-budget} allows it to remove, and the
level removes
\[
  r_\ell\equiv\min_i|\mathcal R_i|
\]
directions from every node, each node removing the directions with its lowest energy. The stored rank is the common value
$k_\ell\equiv k_\ell^{\mathrm{pre}}-r_\ell$. If a level reaches the node
dimension $m_\ell$ with the test still failing, its nodes are stored
dense, as happens for the leaves at $\epsilon=10^{-12}$ in
\Cref{ss:poisson-d3}.

Extending a node that stopped early to the common width makes its error
smaller than $\epsilon_\ell$. This extra accuracy is useful
rather than wasteful, because the elimination of \Cref{ss:x_rr_rs_sr}
carries a child's compression error into its parent's samples. On the two-dimensional problems of
\Cref{s:result}, compressing every node only to $\epsilon_\ell$ and
storing it at its own rank lowered storage by 10--20\%, but it raised the
coarse-level ranks and increased the final error by a factor of three to
twenty.

\subsection{Complete Algorithm and Complexity}
\label{ss:the-adaptive-rank-algorithm}

The construction of
\Cref{s:extend-nullspace-basis,ss:build-u-v-from-nullspace,s:rank-refinement}
applies at every coarser level $\ell$ after replacing $m$ by the node size
$m_\ell$ and $A$ by the Schur complement $A^{(\ell)}$ of \Cref{ss:p}. What
changes is where the test columns come from.

\paragraph{Inherited columns.}
When the algorithm moves to level $\ell$, the recursion of \Cref{ss:p} has
already produced test and sample columns for $A^{(\ell)}$ from the child
skeletons, as in \Cref{eq:coarse}. These inherited columns form the initial
retained matrices $\Omega_{\mathrm{old}}$ and $Y_{\mathrm{old}}$ of every
node. If fewer than $m_\ell$ columns are inherited, we draw just enough
additional columns to complete the square initial block. If more are
inherited, the first $m_\ell$ form that block, and the rest are fed through
the update of \Cref{s:extend-nullspace-basis} one batch at a time, exactly
as freshly drawn columns would be, before any new matvecs are computed.
If the stopping test still fails, fresh columns are drawn at the leaf
level and carried to level $\ell$ as in \crefrem{rem:additional-columns}.
The root has no rank search of its own, so we draw columns until the
root holds at least $1.1$ times its dimension; the recovery of $A^{(0)}$
in \Cref{ss:handle-the-root} is then overdetermined.
When the ranks stay bounded toward the root, as in the one-dimensional
problems of \Cref{s:result}, the leaf-level columns suffice at every level
and no fresh columns are drawn. The implementation also caps the batch size
at level $\ell$ to $\min(b,\lfloor m_\ell/4\rfloor)$, so that the search at a
small coarse node keeps at least four resolvable steps (\Cref{eq:b-eff});
we write $b$ for the batch size throughout.

\paragraph{Test matrices at coarser levels.}
By \crefrem{rem:additional-columns}, a fresh batch arrives at level
$\ell$ still Gaussian and independent of $A^{(\ell)}$. The inherited
columns, however, were used to build the finer-level bases, and the cross-dependence of \crefrem{rem:cross-dependence}
applies to them. Because the appended nullspace block $Q_{i,\mathrm{add}}$
combines fresh and inherited columns, the sample $\Psi_{i,\mathrm{add}}$ in
\Cref{eq:psi-shows-up} is not, in general, independent of $A^{(\ell)}$ at a
coarser level. Relaxing Gaussianity to isotropy does not remove this
restriction, because unbiasedness also requires the test matrix to be
independent of the block it tests, and it is this independence that the
inherited columns lose. There, we use $\rho_i$ and $\eta_i$ as practical error
indicators rather than as estimators with the guarantee of
\Cref{t:psi-gaussian-iid}, and we apply the pruning criterion
\Cref{eq:rank-refine-budget} as a heuristic. \Cref{ss:ls-conditioning}
summarizes the measured level-median condition numbers, which track the
Gaussian benchmark for matrices of the same aspect ratio; the Supplementary Material, Section~S4, gives the level-by-level comparison.
This observation supports the practical use of pruning at
every level but does not establish the independence required by
\Cref{t:psi-gaussian-iid}.

\paragraph{Complete algorithm.}
\Cref{alg:adaptive-rank} gives the overall adaptive-rank method, and the
helper \Cref{alg:adaptive-basis} computes the bases at one level. To obtain
$V_i$, we call it with $Z$, $\Phi$, and $x\mapsto A^\top x$ in place of
$Y$, $\Omega$, and $x\mapsto Ax$. For simplicity of presentation, the helper is written with
$m_\ell$ initial columns, so each local $\Omega_{i,0}$ is square, and the
inherited columns are processed one batch at a time before any fresh
matvecs are computed.

\begin{algorithm}[tbp]
\caption{Level $\ell$ Adaptive Basis Computation for \Cref{alg:adaptive-rank}}
\label{alg:adaptive-basis}
\begin{algorithmic}[1]
\Require Level $\ell$; batch size $b$; stopping tolerance $\epsilon_\ell$ and
pruning tolerance $\epsilon$ (\crefrem{rem:heuristics}); freeze margin
$\gamma$; number of levels $L$;
retained $\Omega$ and $Y=A\Omega$; access to $x\mapsto Ax$
\Ensure Full orthogonal $\{U_i\}$ whose first $k_\ell$ columns are the
accepted bases; augmented $Y,\Omega$
\Function{AdaptiveBasisAtLevel}{$\ell,b,\epsilon_\ell,\epsilon,\gamma,L,
Y,\Omega,(x\mapsto Ax)$}
\State Initialize the retained nullspace and range bases of every node $i$
from the first $m_\ell$ columns of $\Omega$ and $Y$
\Repeat
    \State Take the next inherited $(\Omega_{\mathrm{add}},Y_{\mathrm{add}})$
    pair, or draw a Gaussian $\Omega_{\mathrm{add}}$, compute
    $Y_{\mathrm{add}}\gets A\Omega_{\mathrm{add}}$, and carry both to
    level $\ell$; append the pair to $\Omega$ and $Y$
    \For{node $i$ in level $\ell$}
        \State Extend $Q_i$ by \Cref{alg:nullspace-row-update} and form
        $G_{i,\mathrm{add}}=Y_{i,\mathrm{new}}Q_{i,\mathrm{add}}$
        \State Form $E_{i,\mathrm{add}}=(I-U_iU_i^\top)G_{i,\mathrm{add}}$
        and update $\rho_i$, $\eta_i$, and $\bar\eta_\ell$
        \If{$\rho_i>\gamma\max(\eta_i,\bar\eta_\ell)$}
            \State Append $\texttt{orth}(E_{i,\mathrm{add}})$ to $U_i$
        \EndIf
    \EndFor
\Until{$\rho_i\leq\max(\eta_i,\bar\eta_\ell)$ for every node $i$}
\State Complete every $U_i$ to $k_\ell^{\mathrm{pre}}=\max_i\hat k_i$
columns from the retained samples
\State At each node, sort the last-block energies and find the largest
$\mathcal R_i$ admissible under \Cref{eq:rank-refine-budget} with the
budget of \crefrem{rem:heuristics}
\State Remove $r_\ell=\min_i|\mathcal R_i|$ directions from every node, set
$k_\ell=k_\ell^{\mathrm{pre}}-r_\ell$, and complete each $U_i$ to a full
orthogonal matrix by $\texttt{qr}$
\State \Return $\{U_i\}$, $Y$, $\Omega$
\EndFunction
\end{algorithmic}
\end{algorithm}

\begin{algorithm}[tbp]
\caption{Adaptive-Rank HSS Compression and Factorization}
\label{alg:adaptive-rank}
\begin{algorithmic}[1]
\Require Sampling block size $b$, tolerance $\epsilon$, relaxation factor $\tau \ge 1$, freeze margin $0<\gamma\le 1$, leaf size $m$, number of levels for HSS tree $L$, access to matvecs $x \mapsto Ax$
and $x \mapsto A^\top x$, where $A \in \mathbb{R}^{N \times N}$
\Ensure Compressed and factorized HSS representation of $A$
\Function{AdaptiveRank}{$b,\epsilon,\tau,\gamma,m,L,(x\mapsto Ax),(x\mapsto A^\top x)$}
\State Draw independent standard Gaussian matrices
$\Omega,\Phi\in\mathbb{R}^{N\times m}$
\State $Y \gets (x \mapsto Ax)(\Omega), \quad Z \gets (x \mapsto A^\top x)(\Phi)$
\For{level $\ell = L,L-1,\dots, 1$}
    \State $\epsilon_\ell \gets \epsilon\,\tau^{L-\ell}$ \hfill \Comment{\gray{$\triangleright$~level-dependent tolerance (\crefrem{rem:heuristics})}}
    \If{$\ell < L$}
        \State Assemble $\Omega,\Phi,Y,Z$ from the child skeleton data
        \hfill \Comment{\gray{$\triangleright$~\Cref{ss:p}}}
    \EndIf
    \State $[\{U_i\},Y,\Omega]\gets\textsc{AdaptiveBasisAtLevel}(\cdots)$
    \hfill \Comment{\gray{$\triangleright$~using $x\mapsto Ax$}}
    \State $[\{V_i\},Z,\Phi]\gets\textsc{AdaptiveBasisAtLevel}(\cdots)$
    \hfill \Comment{\gray{$\triangleright$~using $x\mapsto A^\top x$}}
    \State Perform the remaining compression and elimination steps at level $\ell$
    \hfill \Comment{\gray{$\triangleright$~\Cref{ss:form-u,ss:x_rr_rs_sr}}}
\EndFor
\State Solve for $A^{(0)}$ via $Y$ and $\Omega$ from level 1 \hfill \Comment{\gray{$\triangleright$~At root: \Cref{ss:handle-the-root}}}
\State \Return the HSS factors
\EndFunction
\end{algorithmic}
\end{algorithm}

\paragraph{Complexity analysis.}
The number of batches $J_\ell$ processed at level $\ell$ satisfies
\begin{equation}\label{eq:batches-vs-rank}
  J_\ell\,b\;\le\;k_\ell^{\mathrm{pre}}+b\;\le\;k_\ell+2b .
\end{equation}
The first inequality holds because the last node to pass grew at every batch before it
passed. It can fail only if a frozen node fails a later test, which
requires its estimate to rise by a factor of $1/\gamma$ with an unchanged
basis. The second inequality holds because pruning removes at
most one block. Setting $m=m_\ell$ and $J=J_\ell$ in
\creflem{lemma:null-space-costs} bounds the nullspace extension at one
node of level $\ell$ by $\mathcal O((m_\ell+J_\ell b)^3)$. By
\Cref{eq:batches-vs-rank} and $m_\ell=\mathcal O(k_\ell)$, this is
$\mathcal O((k_\ell+b)^3)$, the same order as the fixed-rank construction
at rank $k_\ell$ (\creflem{lemma:compute-basis-cost}).

\begin{theorem}[Complexity of the adaptive factorization]\label{th:complexity-adaptive}
Let the HSS tree have $L$ levels with $N_\ell$ nodes of size $m_\ell$ at
level $\ell$, let $k_\ell$ be the stored rank at level $\ell$, and assume
$m_\ell=\mathcal O(k_\ell)$, as in \Cref{s:fixed-rank}. Let $p$ be the
number of columns of $\Omega$ at the end of the construction, which is the
number of matvecs with $A$; the analogous count applies to $\Phi$ and
$A^\top$.
Excluding the matvecs, \Cref{alg:adaptive-rank} costs
\[
  \mathcal O\Bigl(\sum_{\ell=1}^{L}N_\ell\,(k_\ell+b)^3\Bigr)
  +\mathcal O\Bigl(p\sum_{\ell=1}^{L}N_\ell\,k_\ell^2\Bigr)
\]
flops, where the first term is the work on the local bases and the second
bounds the work proportional to the number of sample columns: recovering
the compressed diagonal blocks from the $p_\ell\le p$ columns present at
level $\ell$, and carrying the columns through the tree.
The factorization requires $\mathcal O(\sum_{\ell}N_\ell k_\ell^2)$ storage,
and one solve with it costs the same. For fixed $b$, this gives the
following bounds.
\begin{enumerate}[label=(\alph*)]
\item For the one-dimensional model, the binary tree with $N_\ell=2^\ell$
and bounded ranks $k_\ell\le k$: $p=\mathcal O(k)$ matvecs with each
operator, $\mathcal O(Nk^2)$ flops, and $\mathcal O(Nk)$ storage and solve
cost. For bounded $k$, the flop count, the storage, and the solve cost are
all $\mathcal O(N)$.
\item For the two-dimensional model, in which a node with $n$ indices has
rank $\mathcal O(\sqrt n)$, so that $k_\ell=\mathcal O(\sqrt{N/2^\ell})$ on
the binary tree with $N_\ell=2^\ell$: $p=\mathcal O(\sqrt N)$ matvecs with
each operator, $\mathcal O(N^{1.5}\log N)$ flops, and
$\mathcal O(N\log N)$ storage and solve cost. The logarithmic factor in the
flop count comes only from carrying the columns through the tree; all
other work is $\mathcal O(N^{1.5})$.
\end{enumerate}
\end{theorem}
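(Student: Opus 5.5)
The proof splits into a per-node, per-level accounting of the general flop bound, followed by summation of that bound under the two rank models. For the first term, fix a level $\ell$ and a node $i$. By \creflem{lemma:null-space-costs} with $m=m_\ell$ and $J=J_\ell$, the nullspace extension costs $\mathcal O((m_\ell+J_\ell b)^3)$. By \Cref{eq:batches-vs-rank} and $m_\ell=\mathcal O(k_\ell)$, this is $\mathcal O((k_\ell+b)^3)$.

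The remaining local dense operations fit under the same bound. Each operation and its cost is:
\begin{itemize}
\item forming $G_{i,\mathrm{add}}=Y_{i,\mathrm{new}}Q_{i,\mathrm{add}}$ costs $\mathcal O(m_\ell p b)$ per batch;
\item projecting and orthonormalizing $E_{i,\mathrm{add}}$ costs $\mathcal O(m_\ell k_\ell b)$;
\item completing to the common width, pruning ($\mathcal O(m_\ell b^2)$ plus a sort), and the final full $\texttt{qr}$ cost $\mathcal O(m_\ell^2 k_\ell)$;
\item forming $R_i,C_i$ from $X_i^{rr}$ costs $\mathcal O(m_\ell^3)$.
\end{itemize}
The per-batch terms are summed over $J_\ell=\mathcal O((k_\ell+b)/b)$ batches. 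Terms involving the current local column count, which is $\mathcal O(k_\ell+b)$ during the search, are then $\mathcal O((k_\ell+b)^3)$. Multiplying by $N_\ell$ nodes and summing over levels gives the first term; the same argument covers the $V_i$ side.

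For the second term, consider the work proportional to the number of sample columns. Applying $U_i^\top$, $V_i^\top$ and the elimination to the $p_\ell\le p$ columns at a node costs $\mathcal O(m_\ell^2 p_\ell)$. The least-squares recovery of $X_i^r$ from $\tilde Y_i^r=X_i^r\tilde\Omega_i$ by QR costs $\mathcal O(p_\ell m_\ell^2)$. Fresh columns carried from the leaves to level $\ell$ cost, by \crefrem{rem:additional-columns}, $\mathcal O(m_{\ell'}k_{\ell'}+(m_{\ell'}-k_{\ell'})^2)=\mathcal O(k_{\ell'}^2)$ per column per node at each finer level $\ell'$. Summing over all nodes and columns therefore gives at most $p\sum_\ell N_\ell k_\ell^2$, using $m_\ell=\mathcal O(k_\ell)$.

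Storage consists of $U_i$, $V_i$ (as reflectors or dense $m_\ell\times m_\ell$) and the blocks $X_i^{rr},X_i^{rs},X_i^{sr}$, which is $\mathcal O(N_\ell k_\ell^2)$ per level. The factorized solve touches each stored entry $\mathcal O(1)$ times, so its cost equals the storage.

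For (a), ranks are bounded, so the column count stays bounded. Each level's search consumes only $\mathcal O(k+b)$ batch columns, and the paper notes the leaf columns suffice, giving $p=\mathcal O(k)$ for fixed $b$. With $N_\ell=2^\ell$ and $\sum_\ell 2^\ell=\mathcal O(N/k)$, since $N_L k\sim N$, the first term is $\mathcal O((N/k)k^3)=\mathcal O(Nk^2)$. The second term is $\mathcal O(k\cdot(N/k)k^2)=\mathcal O(Nk^2)$, and storage is $\mathcal O(Nk)$.

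For (b), let $k_\ell\sim\sqrt{N2^{-\ell}}$. Then $N_\ell k_\ell^3\sim N^{1.5}2^{-\ell/2}$, a geometric series dominated by the root, so the first term is $\mathcal O(N^{1.5})$ (the $b$ terms are lower order). Next, $N_\ell k_\ell^2\sim N$ at every level, giving storage and solve cost $\mathcal O(N\log N)$. The count $p$ is bounded by the largest per-level requirement $m_\ell+J_\ell b$ at the top levels, giving $p=\mathcal O(\sqrt N)$. Finally, $p\sum_\ell N_\ell k_\ell^2=\mathcal O(\sqrt N\cdot N\log N)$, which is the only source of the logarithm.

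The delicate step is justifying $p=\mathcal O(\max_\ell k_\ell)$ rather than a sum over levels. Inherited columns are reused at each level and processed before fresh draws, so fresh draws at level $\ell$ are only needed to bring the total up to $m_\ell+k_\ell+\mathcal O(b)$. The total is therefore a maximum, not a sum. This relies on \Cref{eq:batches-vs-rank}, whose first inequality assumes no frozen node fails later; that assumption is taken as stated.
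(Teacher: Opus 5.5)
Your accounting follows the paper's proof closely. The ingredients match: the per-node basis cost from \creflem{lemma:null-space-costs} and \Cref{eq:batches-vs-rank}, an $\mathcal O(k_\ell^2)$ cost per column per node per level for carrying, storage as reflectors plus the $X$ blocks, and $p=\max_\ell(m_\ell+J_\ell b)$ because columns are never discarded.

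What you do not establish is the last sentence of (b): that the logarithm comes only from carrying the columns, and that all other work is $\mathcal O(N^{1.5})$. You bound the least-squares recovery by $\mathcal O(p_\ell m_\ell^2)$ per node and then absorb it into $p\sum_\ell N_\ell k_\ell^2$. In the two-dimensional model that term is $\mathcal O(N^{1.5}\log N)$, so as written the recovery could itself carry the logarithm. Your conclusion therefore only says that the second term, as a whole, is the source. Your remark that the column count is $\mathcal O(k_\ell+b)$ during the search does not close this. The recovery uses all $p_\ell$ columns present at level $\ell$, including inherited columns the search may never have consumed.

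The missing step is a bound on $p_\ell$ itself. Columns are only ever added, and fresh ones are drawn only when a level needs them. Hence $p_\ell=\max_{\ell'\ge\ell}(m_{\ell'}+J_{\ell'}b)$, and by \Cref{eq:batches-vs-rank} this is $\mathcal O(\max_{\ell'\ge\ell}k_{\ell'}+b)$. In the two-dimensional model the ranks grow toward the root, so this is $\mathcal O(k_\ell+b)$; this is the paper's observation that the count grows toward the root, giving $p_\ell=m_\ell+J_\ell b$. The recovery then costs $\mathcal O((k_\ell+b)^3)$ per node, the same order as the basis work, and sums to $\mathcal O(N^{1.5})$. Only passing all $p=\mathcal O(\sqrt N)$ columns through each of the $L$ levels, at $\mathcal O(N)$ per column per level, produces the $\mathcal O(N^{1.5}\log N)$ term. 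With this added, your argument proves the full statement.
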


\begin{proof}
At a node of level $\ell$, the nullspace extension costs
$\mathcal O((m_\ell+J_\ell b)^3)$ by \creflem{lemma:null-space-costs}, and
completing the basis by a full QR factorization costs
$\mathcal O(m_\ell^3)$. With \Cref{eq:batches-vs-rank} and
$m_\ell=\mathcal O(k_\ell)$, this work is $\mathcal O((k_\ell+b)^3)$ per
node; summing over the nodes and levels gives the first term. Recovering
the compressed diagonal blocks solves the least-squares problems of
\Cref{eq:ls2}, with $p_\ell$ equations and $m_\ell$ unknowns, at
$\mathcal O(p_\ell m_\ell^2)=\mathcal O(p_\ell k_\ell^2)$ per node. Every
column of $\Omega$ and $Y$ is drawn at the
leaf level and carried toward the root by applying the stored $U_j^\top$, $V_j^\top$, and
elimination factors once at each level it passes. At a node, applying the
$k_\ell$ reflectors costs $\mathcal O(m_\ell k_\ell)$ and the solve with
$X_i^{rr}$ costs $\mathcal O((m_\ell-k_\ell)^2)$, together
$\mathcal O(k_\ell^2)$, as in one level of the factorized solve. Since
$p_\ell\le p$, the recovery and the carrying together cost at most the
second term. The stored factors are $k_\ell$ Householder vectors of length
$m_\ell$ for each $U_i$ and $V_i$ and the $\mathcal O(k_\ell^2)$ entries of
the blocks $X_i^{rr}$, $X_i^{rs}$, and $X_i^{sr}$ at each node, and the
solve applies each factor once~\cite{chandrasekaran2006afast,xia2010fast}.

Columns are never discarded and their number is nondecreasing toward the
root, so $p=\max_\ell(m_\ell+J_\ell b)$, the largest number of columns any
level requires, which is $\mathcal O(\max_\ell k_\ell+b)$ by
\Cref{eq:batches-vs-rank}. Under both models, the number of columns present
at level $\ell$ is $p_\ell=\mathcal O(k_\ell+b)$: in the one-dimensional
model every column is drawn at the leaves and $p=\mathcal O(k)$, and in the
two-dimensional model the count grows toward the root, so
$p_\ell=m_\ell+J_\ell b$. The recovery therefore costs
$\mathcal O((k_\ell+b)^3)$ per node, the same order as the work on the
bases, and only carrying the columns can exceed the first term. For the
one-dimensional model, $N=2^Lm_L$ with $m_L=\Theta(k)$ (the leaf size is
$2k$ in \Cref{s:fixed-rank}), so $\sum_\ell 2^\ell=\mathcal O(N/k)$ and
$p=\mathcal O(k)$; substituting gives $\mathcal O(Nk^2)$ flops and
$\mathcal O(Nk)$ storage. For the two-dimensional model,
$N_\ell k_\ell^2=\mathcal O(N)$ at every level,
$N_\ell k_\ell^3=\mathcal O(N^{1.5}/2^{\ell/2})$, and
$p=\mathcal O(k_1)=\mathcal O(\sqrt N)$. The work on the bases and the
recovery therefore sum to $\mathcal O(N^{1.5})$, the storage to
$\mathcal O(NL)=\mathcal O(N\log N)$, and carrying the columns costs
$\mathcal O(\sqrt N\cdot N\log N)$ $=\mathcal O(N^{1.5}\log N)$.
\qed
\end{proof}

The second term of the flop count is not specific to adaptivity: any
construction from $p$ global sketches carries every sketch column through
the tree, and the fixed-rank construction of \Cref{s:fixed-rank} does the
same with all $p$ columns at every level. In the two-dimensional model,
this term exceeds the dense work by a logarithmic factor, because $p$ grows
like $\sqrt N$ while every level contributes $\mathcal O(N)$ per column.
When the ranks are bounded, the leaf-level columns suffice at every level,
and the flop count reduces to that of the fixed-rank construction in
\Cref{th:complexity-fixed}. The storage bound covers the factorization.
During the construction, the algorithm also holds the sample matrices
$\Omega,Y,\Phi,Z\in\mathbb R^{N\times p}$, which is $\mathcal O(Np)$ working
memory; \Cref{ss:setup} excludes them from the reported storage.

\section{Numerical Results} \label{s:result}

We assess the adaptive factorization on four application classes: Poisson
and Helmholtz Schur complements, Mat\'{e}rn covariance matrices, and a
complex Lippmann--Schwinger equation. The experiments examine how the
detected ranks affect computational cost, how the local compression
tolerance relates to the solve residual, and how the factorization
performs as a preconditioner. The Poisson problems test bounded ranks and
rank growth; the Helmholtz, Mat\'{e}rn, and Lippmann--Schwinger problems
test indefinite, ill-conditioned, and complex non-Hermitian operators,
respectively. \Cref{app:parameters} summarizes the evidence supporting
the choices of sampling block size, pruning scope, and unregularized
least-squares recovery. The full diagnostic studies are provided in
the Supplementary Material.

\subsection{Test Problems and Protocol}\label{ss:setup}

\Cref{t:problems} lists the five test problems. Throughout this section,
$N$ is the order of the dense matrix $A$, and the dimension of a matrix
problem refers to its indexing point set. Thus, a separator Schur
complement from a 2D PDE is a 1D problem, whereas one from a 3D PDE is a
2D problem. For the performance tables, we prescribe the solution
$x_\star(t)=\sin(\pi t)$ in 1D and $x_\star(t_1,t_2)=\sin(\pi t_1)\sin(\pi t_2)$
in 2D, sampled on the normalized indexing grid, and set $f=Ax_\star$.
The HSS tree is binary in 1D and has four children per node in 2D,
obtained by bisecting the point set in each coordinate direction. In 2D,
the coarsening step of \Cref{ss:p} stacks four child skeletons; this
changes constants but not the bounds of \Cref{th:complexity-adaptive}.
For each problem, we construct $A_{\mathrm{approx}}$ and compute
$\tilde{x}=A_{\mathrm{approx}}^{-1}f$. The tables report subsets of the
following quantities:
\begin{itemize}
  \item $\epsilon$: the input tolerance controlling estimated local
  compression errors; it is not a prescribed solve residual.
  \item $n_{\mathrm{mv}}$: the total number of matvecs with $A$ and, when
  needed, $A^\top$ or $A^*$ during construction.
  \item $t_{\mathrm{fact}}$: the local compression and elimination time,
  excluding all matvec time.
  \item $t_{\mathrm{solve}}$: the time for one solve with the computed
  factorization.
  \item relres: the relative residual
  $\|f-A\tilde x\|_2/\|f\|_2$ of the direct solve, before Krylov refinement.
  \item mem$_{\mathrm{MB}}$: storage for the computed factors in MB,
  excluding sample matrices and other construction workspace.
  \item $n_{\mathrm{it}}$ and $\tilde n_{\mathrm{it}}$: the Krylov
  iteration counts with and without the factorization as a preconditioner,
  using the same matrix and right-hand side.
\end{itemize}
We use MATLAB's \texttt{pcg} for SPD matrices and unrestarted
\texttt{gmres} for the indefinite and non-Hermitian problems, with solver
tolerance $10^{-12}$ in both the preconditioned and unpreconditioned runs.
For preconditioned GMRES, this tolerance applies to the relative
preconditioned residual, which differs from the direct-solve residual
reported as relres. Stagnated runs are marked and explained in
\Cref{app:lippmann}.

\begin{table}
    \centering
    \small
    \caption{\em Test problems and their roles in the numerical study.
    The factorization accesses each matrix only through matvecs.}
    \label{t:problems}
    \begin{tabular}{@{}>{\raggedright\arraybackslash}p{0.46\textwidth}>{\raggedright\arraybackslash}p{0.46\textwidth}@{}}
        \toprule
        Problem & What it tests \\
        \midrule
        Schur complement of the 2D Poisson equation, SPD &
        bounded ranks and linear scaling \\
        \addlinespace
        Schur complement of the 3D Poisson equation, SPD &
        rank growth with matrix size and tree level \\
        \addlinespace
        Schur complement of the 2D Helmholtz equation, symmetric
        indefinite &
        oscillatory interactions and sensitivity to resonance \\
        \addlinespace
        2D Mat\'{e}rn covariance, two parameter sets, ill-conditioned SPD &
        preconditioning of ill-conditioned matrices \\
        \addlinespace
        2D Lippmann--Schwinger equation, complex symmetric &
        complex arithmetic and preconditioning of a near-resonant cavity \\
        \bottomrule
    \end{tabular}
\end{table}

All test problems are set up on regular grids, which permit efficient operator applications: sine transforms
and tridiagonal solves for the Schur complements, and two-dimensional FFTs
for the kernel matrices. The factorization uses only these matvec routines
and the prescribed tree, so it does not depend on the transform-based
implementation. On other geometries, suitable sparse solves or fast kernel
summation routines can supply the matvecs.

All reported times are wall-clock seconds.
Unless otherwise stated, we use leaf size $m=256$, sampling block size
$b=16$ for Schur complements of 2D PDEs and $b=32$ otherwise, freeze margin
$\gamma=1/4$
(\Cref{s:rank-refinement}), and the relaxation factors of
\crefrem{rem:heuristics}. \Cref{ss:block-size} summarizes the block-size
choice; the Supplementary Material, Section~S5, gives the full parameter sweep.

A sample stream is the sequence of Gaussian test matrices drawn from one
seed. The application tables report single runs with MATLAB's Twister generator
seeded at $0$, except \Cref{t:helmholtz-schur}, which reports medians over
three sample streams. In representative Poisson and Mat\'{e}rn tests,
three streams produced less than $1\%$ variation in matvec count and
storage and at most $15\%$ variation in residual. The Helmholtz Schur
complement shows a larger residual variation, so we report medians for
it. Parameter comparisons also use matched streams, drawn from the same
seeds for every setting compared. This matters when the differences
between settings are small relative to the sampling variability.
The block-size comparisons in \Cref{ss:block-size} use medians over four
matched streams; the conditioning diagnostic in \Cref{ss:ls-conditioning}
uses one fixed stream per problem and aggregates over nodes at each level.
The Supplementary Material, Section~S1, gives further variability diagnostics.

All experiments were run on a single Ice Lake compute node of the Stampede3
system at the Texas Advanced Computing Center. Each node has two Intel Xeon
Platinum 8380 processors, giving $80$ cores at a nominal $2.3$\,GHz, and
$256$\,GB of DDR4-3200 memory. Our implementation is written in
MATLAB~R2025b and contains no explicit parallelism; multithreading is
provided by MATLAB's built-in numerical routines.

Unpreconditioned Krylov iteration on the same operator provides a natural
baseline under the matrix-free access model. The reported counts
$\tilde n_{\mathrm{it}}$ isolate the effect of the HSS preconditioner. They reach thousands for the
Helmholtz Schur complement and the Mat\'{e}rn matrices, and they grow
with the frequency for the Lippmann--Schwinger cavity. We do not benchmark against
a separate HSS construction followed by factorization, such as the
construction of~\cite{levitt2024linear} coupled to a conventional HSS
solver. Such a comparison would require another implementation with
matched tolerances and accuracy. The present experiments therefore assess
scaling, accuracy, and preconditioning effectiveness rather than establish
a speed advantage over other hierarchical solvers.

\subsection{Schur Complements of the Poisson Equation}\label{ss:poisson}

Consider Poisson's equation $-\Delta u=f$ with Dirichlet boundary
conditions on a rectangular domain in $d=2$ or $3$ dimensions. A uniform
grid with the five-point stencil in 2D or the seven-point stencil in 3D
gives a sparse matrix $\A$. One level of nested
dissection~\cite{george1973nested} partitions the grid into subdomains
$\mathcal{I}_1,\mathcal{I}_2$ and a separator $\mathcal{I}_3$, yielding
the block structure in \Cref{f:nd}.

Eliminating $\mathcal{I}_1$ and $\mathcal{I}_2$ yields the Schur complement on the
separator:
\begin{equation}\label{eq:poisson-equation}
A = \A_{33} - \A_{31}\A_{11}^{-1}\A_{13} - \A_{32}\A_{22}^{-1}\A_{23}.
\end{equation}
Although $\A$ is sparse, its $N\times N$ Schur complement $A$ is dense.
A conventional dense factorization requires $\mathcal O(N^3)$ arithmetic
and $\mathcal O(N^2)$ storage, illustrating the cost of large separator
blocks in sparse direct methods~\cite{davis2016survey}. The Schur
complement has hierarchical off-diagonal low-rank
structure~\cite{chandrasekaran2010numerical,xia2010superfast,xia2013efficient,ghysels2016efficient,aminfar2016fast,ho2016hierarchical,amestoy2017complexity},
making it a natural test for the proposed factorization. The construction
cost can be amortized over multiple right-hand sides when the underlying
operator is unchanged. A matvec with $A$ requires sparse coupling products
and solves with $\A_{11}$ and $\A_{22}$; their inverses are never formed.
The regular subdomain geometry allows these solves to be performed with
the fast transforms described in \Cref{ss:setup}.

\begin{figure}
\centering
\begin{minipage}[c]{0.40\textwidth}
\centering
\includegraphics[width=0.85\linewidth]{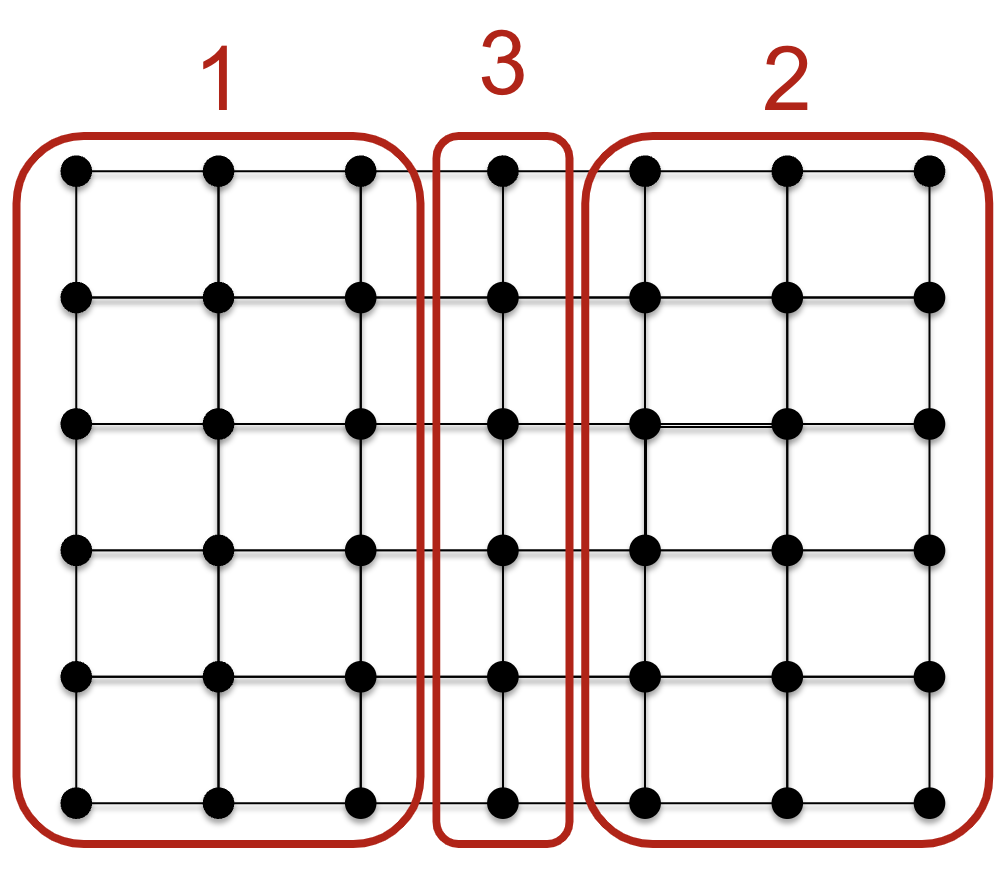}
\end{minipage}\hfill
\begin{minipage}[c]{0.55\textwidth}
\centering
$\displaystyle
\A = \begin{bmatrix}
\A_{11} & 0 & \A_{13} \\
0 & \A_{22} & \A_{23} \\
\A_{31} & \A_{32} & \A_{33}
\end{bmatrix}$
\end{minipage}
\caption{Left: partition of the grid into subdomains $\mathcal{I}_1$, $\mathcal{I}_2$ and
separator $\mathcal{I}_3$. Right: the corresponding block structure of
$\A$. In both dimensions, the transverse width is fixed at $51$ grid
points, giving two subdomains of thickness $25$ separated by one grid
layer. The separator grows with $N$ while this thickness remains fixed.
}
\label{f:nd}
\end{figure}

\subsubsection{Two Dimensions ($d=2$)}\label{ss:poisson-d2}

We discretize the 2D Laplacian with the five-point stencil on a regular
$n_x \times n_y$ grid. The transverse width is $n_x = 51$ points, split as
$25\mid1\mid25$ across the separator. We vary the separator length $n_y = N$.
Fixing the transverse width gives the three Schur-complement families a
common geometry. Additional width tests show a modest effect on rank:
at $N=8192$ and $\epsilon=10^{-6}$, widening the strip from $n_x=51$ to
$801$ increases the largest rank over all levels from $15$ to $33$. In
an additional 3D run (not tabulated) at $N=128^2$ and
$\epsilon=10^{-6}$, doubling the slab thickness leaves the leaf and
coarsest ranks unchanged and raises the middle-level rank from $384$ to
$409$.

\Cref{t:poisson-schur} presents the results, and
\Cref{f:poisson-1D-scaling-plot} shows the scaling. At each tolerance, the
detected ranks remain bounded over the tested sizes, and factorization
time, solve time, and storage grow approximately linearly with $N$,
consistent with \Cref{th:complexity-adaptive}. The selected leaf ranks are
approximately $12$, $16$, $20$, and $28$ at $\epsilon=10^{-3}$,
$10^{-6}$, $10^{-9}$, and $10^{-12}$, respectively; the largest rank at
any level is $48$.

Only a few sampling batches are needed. Starting with leaf size $m=256$ columns and
appending $b=16$ columns per batch gives $288$ matvecs at the two looser
tolerances and $304$ at the two tighter tolerances, independently of
$N$. The relative residual also depends little on $N$ and remains
within a factor of three of $\epsilon$. The condition number is
approximately $73.5$ throughout. This example shows that adaptivity
recovers the expected bounded-rank behavior without prior rank estimates.

\begin{table}
    \centering
    \footnotesize
    \caption{\em Schur complement of the 2D Poisson equation
    (\Cref{ss:poisson-d2}): results as $N$ and $\epsilon$ vary.}
    \label{t:poisson-schur}
    \begin{tabular}{ccccccc}
    \toprule
    $\epsilon$ & $N$ & $n_{\text{mv}}$ & $t_{\text{fact}}$ & $t_{\text{solve}}$ & $\text{relres}$ & $\text{mem}_{\text{MB}}$  \\
    \midrule
    \multirow{5}{*}{$10^{-3}$}
     & $2^{17}$ & 288 & 9.4e+0 & \num{4.6e-1} & \num{1.3e-3} & 2.8e+2  \\
     & $2^{18}$ & 288 & 1.9e+1 & \num{8.1e-1} & \num{1.6e-3} & 5.7e+2  \\
     & $2^{19}$ & 288 & 3.7e+1 & 1.5e+0 & \num{8.2e-4} & 1.1e+3  \\
     & $2^{20}$ & 288 & 7.6e+1 & 3.3e+0 & \num{8.0e-4} & 2.3e+3  \\
     & $2^{21}$ & 288 & 1.5e+2 & 6.1e+0 & \num{1.4e-3} & 4.6e+3  \\
    \midrule
    \multirow{5}{*}{$10^{-6}$}
     & $2^{17}$ & 288 & 1.1e+1 & \num{3.9e-1} & \num{9.5e-7} & 3.0e+2  \\
     & $2^{18}$ & 288 & 2.3e+1 & \num{6.6e-1} & \num{8.4e-7} & 6.0e+2  \\
     & $2^{19}$ & 288 & 4.1e+1 & 1.5e+0 & \num{1.1e-6} & 1.2e+3  \\
     & $2^{20}$ & 288 & 8.1e+1 & 3.0e+0 & \num{1.3e-6} & 2.4e+3  \\
     & $2^{21}$ & 288 & 1.6e+2 & 6.3e+0 & \num{1.4e-6} & 4.8e+3  \\
    \midrule
    \multirow{5}{*}{$10^{-9}$}
     & $2^{17}$ & 304 & 1.3e+1 & \num{3.9e-1} & \num{1.0e-9} & 3.1e+2  \\
     & $2^{18}$ & 304 & 2.6e+1 & \num{8.2e-1} & \num{1.3e-9} & 6.2e+2  \\
     & $2^{19}$ & 304 & 4.9e+1 & 1.4e+0 & \num{9.8e-10} & 1.2e+3  \\
     & $2^{20}$ & 304 & 9.3e+1 & 2.9e+0 & \num{1.2e-9} & 2.5e+3  \\
     & $2^{21}$ & 304 & 1.8e+2 & 6.3e+0 & \num{1.3e-9} & 5.0e+3  \\
    \midrule
    \multirow{5}{*}{$10^{-12}$}
     & $2^{17}$ & 304 & 1.4e+1 & \num{4.0e-1} & \num{2.2e-12} & 3.4e+2  \\
     & $2^{18}$ & 304 & 2.9e+1 & \num{7.8e-1} & \num{2.4e-12} & 6.8e+2  \\
     & $2^{19}$ & 304 & 5.5e+1 & 1.6e+0 & \num{2.6e-12} & 1.4e+3  \\
     & $2^{20}$ & 304 & 1.1e+2 & 2.9e+0 & \num{2.9e-12} & 2.7e+3  \\
     & $2^{21}$ & 304 & 2.0e+2 & 6.4e+0 & \num{2.8e-12} & 5.5e+3  \\
    \bottomrule
    \end{tabular}
\end{table}

\bigskip

\begin{figure}
    \centering
    \begin{minipage}[t]{0.31\textwidth}
        \centering
        \includegraphics[width=\linewidth]{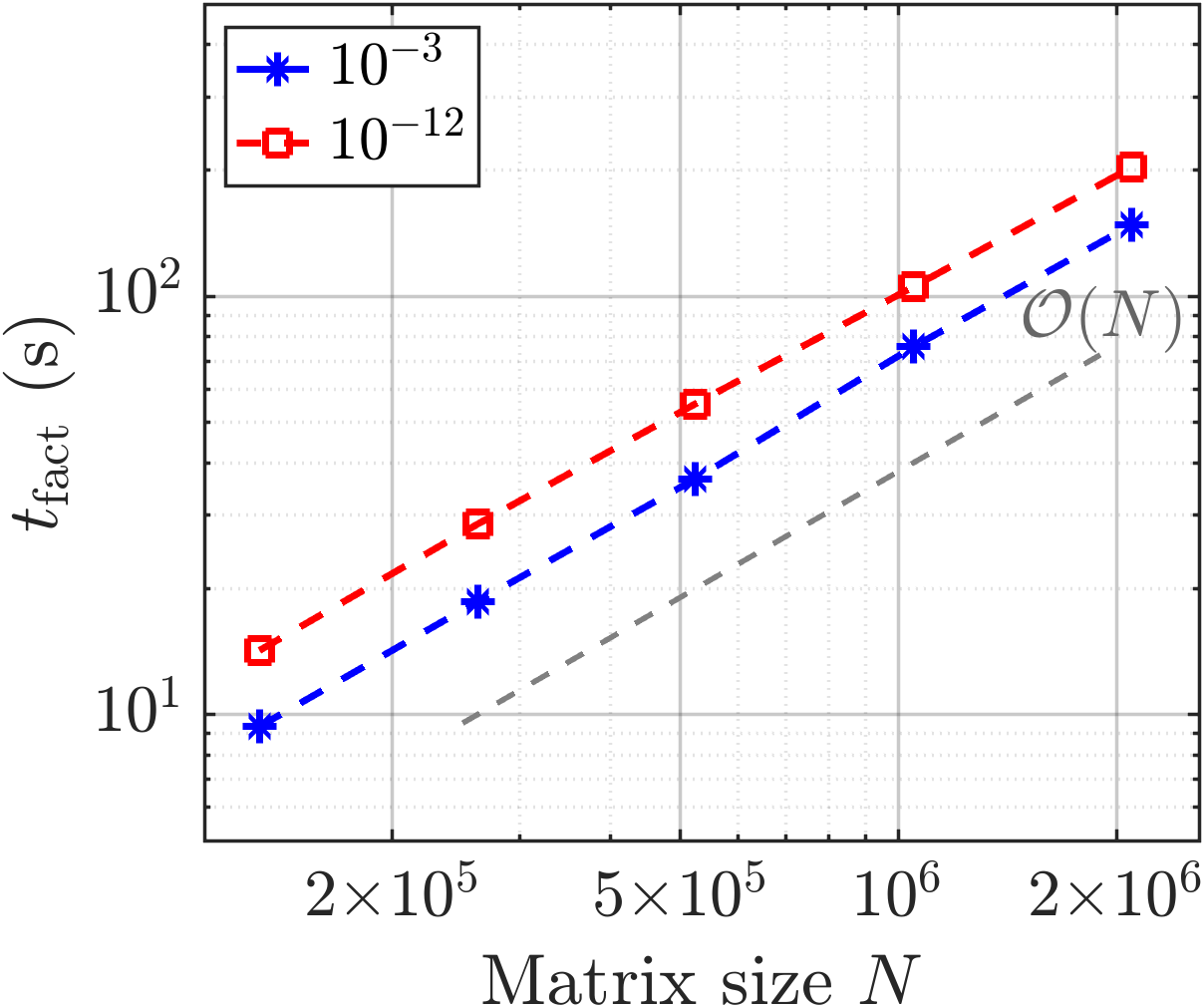}\\[-0.3em]
        \small (a) $t_{\text{fact}}$
    \end{minipage}
    \hfill
    \begin{minipage}[t]{0.31\textwidth}
        \centering
        \includegraphics[width=\linewidth]{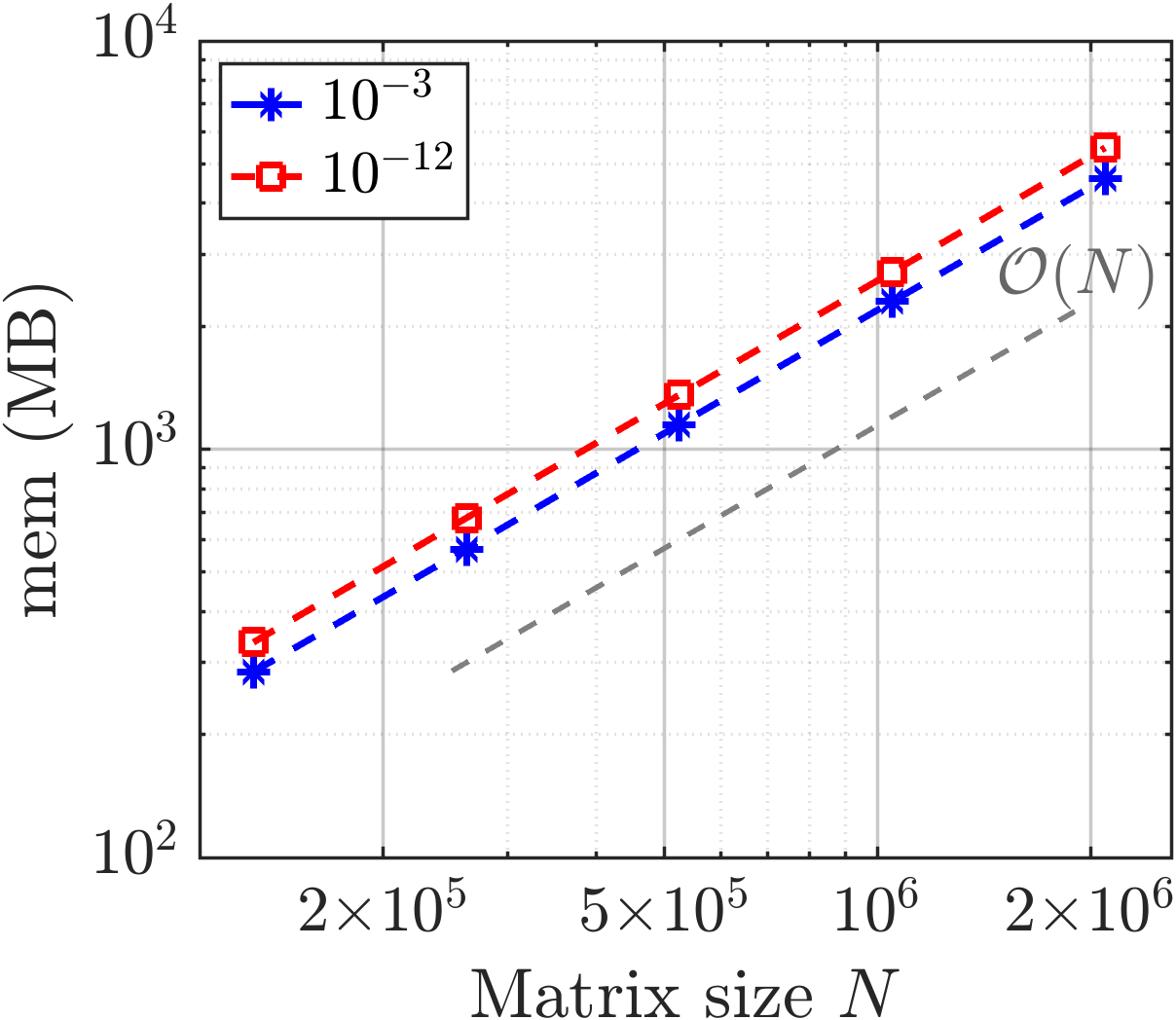}\\[-0.3em] \small (b) $\text{mem}_{\text{MB}}$
    \end{minipage}
    \hfill
    \begin{minipage}[t]{0.31\textwidth}
        \centering
        \includegraphics[width=\linewidth]{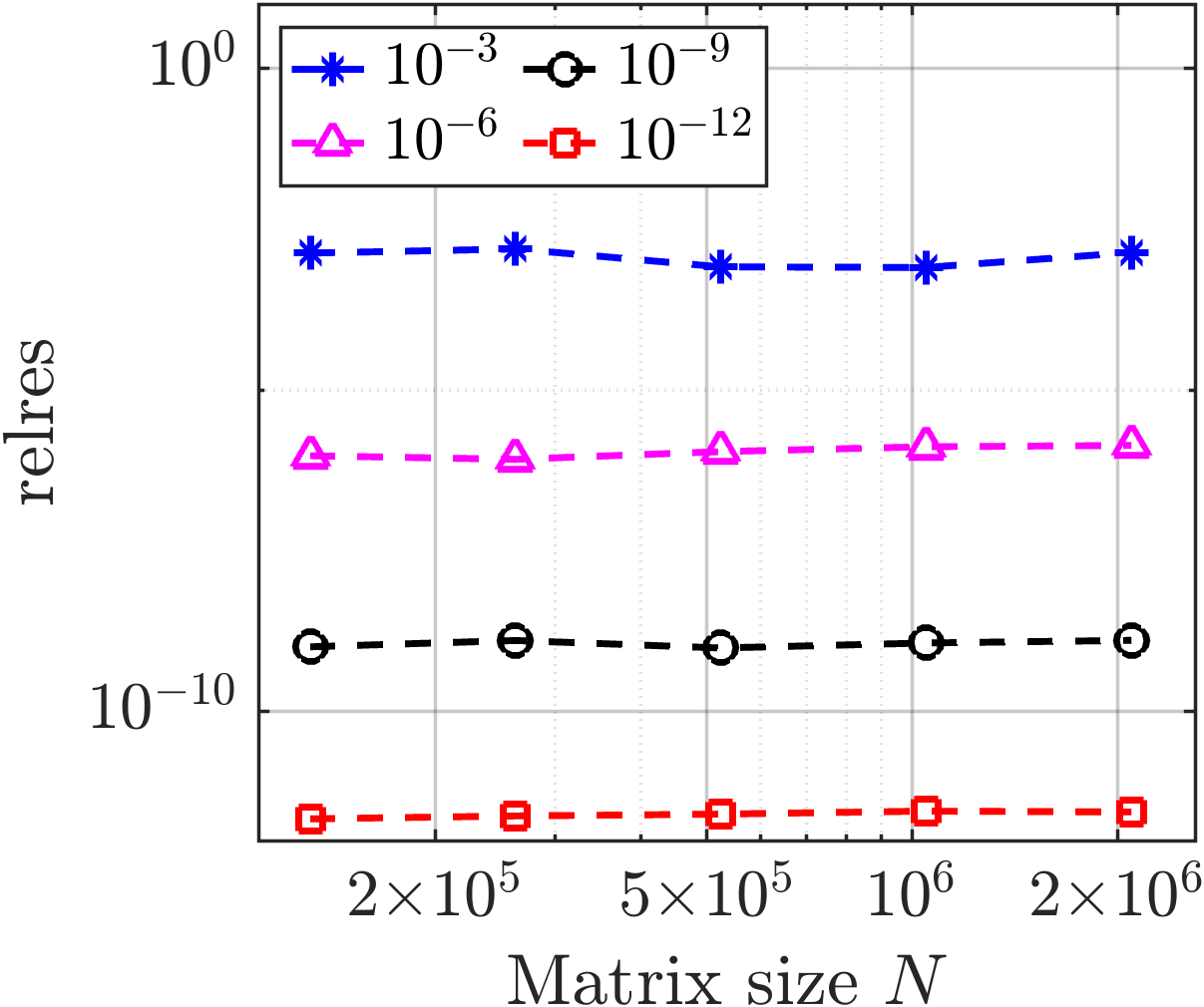}\\[-0.3em] \small (c) relres
    \end{minipage}

    \caption{Scaling plots for the Schur complement of the 2D Poisson
    equation (\Cref{ss:poisson-d2}), from the runs of
    \Cref{t:poisson-schur}: (a) factorization time $t_{\text{fact}}$ and
    (b) factor storage, both at the two extreme tolerances against an
    $\mathcal{O}(N)$ reference, and (c) relres at all four tolerances. All panels
    share the matrix size $N$ on the $x$-axis and use log--log scaling.
    }
    \label{f:poisson-1D-scaling-plot}
\end{figure}

\subsubsection{Three Dimensions ($d=3$)}\label{ss:poisson-d3}

We next consider Poisson's equation in 3D, where the separator is a 2D
plane. We discretize with the seven-point stencil on an
$n_x \times n_y \times n_z$ grid, fix $n_x = 51$ with the same
$25\mid1\mid25$ split, and vary $N = n_y n_z$ with $n_y=n_z$.

\Cref{t:poisson-2D-results} shows rank growth both with $N$ and toward the
root. As $N$ quadruples, the coarsest stored ranks increase by a factor of
approximately $1.9$--$2.1$, consistent with the $\mathcal O(\sqrt N)$
rank model in \Cref{th:complexity-adaptive}. The matvec count, equal here
to the final number of test columns $p$, increases by a factor of
$1.9$--$2.1$. These ranks are detected during construction rather than
supplied as input.

Storage grows by a factor of approximately five to nine per size step,
faster than $N\log N$ over this range. Factorization time increases by a
factor of roughly nine to twelve per size step, corresponding to
empirical power-law exponents of $1.5$--$1.8$. These finite-size measurements do not resolve the
asymptotic $\mathcal O(N^{1.5}\log N)$ time and $\mathcal O(N\log N)$
storage bounds. Nevertheless, the storage savings relative to a dense
matrix are substantial: at $N=512^2$ and $\epsilon=10^{-12}$, the factors
occupy $22$ GB, compared with approximately $550$ GB for $A$ in dense
double precision. One factorized solve takes at most $7.5$ s in the
reported tests.

The relative residual increases with $N$ even though the condition number
grows only moderately, from approximately $67.3$ at $N=64^2$ to $119$
at $256^2$. Across the reported tolerances, relres$/\epsilon$ rises from
approximately $4$--$6$ at $N=64^2$ to $180$--$220$ at $512^2$,
consistent with error accumulation and tolerance relaxation across levels.
At $\epsilon=10^{-12}$, the largest problem has residual
$1.8\times10^{-10}$. We report $\epsilon=10^{-6},10^{-9},10^{-12}$;
the looser choice $10^{-3}$ produces a residual as large as $0.21$.
At the tightest tolerance, the leaf blocks are retained at full rank and
stored densely. This fallback allows the construction to proceed when
leaf-level compression is ineffective.

\begin{table}
    \centering
    \footnotesize
    \setlength{\tabcolsep}{3pt}
    \caption{\em Schur complement of the 3D Poisson equation
    (\Cref{ss:poisson-d3}): maximum stored rank at each level and
    performance as $N$ and $\epsilon$ vary. At $\epsilon=10^{-12}$,
    the leaves are stored densely, so the reported leaf rank equals the
    leaf size. A dash denotes a level absent from the tree.}
    \label{t:poisson-2D-results}
    \begin{tabular}{cccccccccccc}
        \toprule
        & & \multicolumn{5}{c}{Max rank at level} & & & & & \\
        \cmidrule(lr){3-7}
        $\epsilon$ & $N$ & 5 & 4 & 3 & 2 & 1
        & $n_{\text{mv}}$ & $t_{\text{fact}}$
        & $t_{\text{solve}}$ & relres & mem$_{\text{MB}}$
        \\
        \midrule
        \multirow{4}{*}{$10^{-6}$}
         & $64^2$ & -- & -- & -- & 174 & 224 & 1016 & 1.9e+0 & \num{2.0e-2} & \num{5.6e-6} & 4.4e+1  \\
         & $128^2$ & -- & -- & 177 & 384 & 416 & 1952 & 1.7e+1 & \num{1.0e-1} & \num{3.4e-5} & 3.0e+2  \\
         & $256^2$ & -- & 179 & 384 & 768 & 800 & 3872 & 1.5e+2 & \num{5.7e-1} & \num{9.9e-5} & 1.7e+3  \\
         & $512^2$ & 180 & 384 & 768 & 1536 & 1600 & 7744 & 1.3e+3 & 3.2e+0 & \num{2.2e-4} & 8.8e+3  \\
        \midrule
        \multirow{4}{*}{$10^{-9}$}
         & $64^2$ & -- & -- & -- & 212 & 288 & 1296 & 2.6e+0 & \num{2.0e-2} & \num{4.4e-9} & 6.0e+1  \\
         & $128^2$ & -- & -- & 216 & 512 & 576 & 2624 & 3.0e+1 & \num{1.3e-1} & \num{2.4e-8} & 4.6e+2  \\
         & $256^2$ & -- & 218 & 512 & 1056 & 1088 & 5312 & 3.1e+2 & \num{8.5e-1} & \num{9.5e-8} & 2.8e+3  \\
         & $512^2$ & 219 & 512 & 1056 & 2144 & 2272 & 10848 & 3.3e+3 & 4.5e+0 & \num{2.1e-7} & 1.5e+4  \\
        \midrule
        \multirow{4}{*}{$10^{-12}$}
         & $64^2$ & -- & -- & -- & 256 & 337 & 1504 & 3.6e+0 & \num{4.0e-2} & \num{4.5e-12} & 6.4e+1  \\
         & $128^2$ & -- & -- & 256 & 606 & 704 & 3128 & 4.3e+1 & \num{1.6e-1} & \num{1.9e-11} & 5.6e+2  \\
         & $256^2$ & -- & 256 & 607 & 1312 & 1408 & 6656 & 4.8e+2 & 1.3e+0 & \num{9.1e-11} & 3.7e+3  \\
         & $512^2$ & 256 & 632 & 1312 & 2784 & 3008 & 14144 & 5.9e+3 & 7.5e+0 & \num{1.8e-10} & 2.2e+4  \\
        \bottomrule
    \end{tabular}
\end{table}
\subsection{Schur Complement of the Helmholtz Equation}
\label{ss:helmholtz-schur}

To test an oscillatory, indefinite problem, we replace the Poisson
operator by the Helmholtz operator $-\Delta-\omega^2$ while retaining the
same strip geometry. The resulting matrices are real symmetric but
indefinite, and their off-diagonal ranks depend on frequency. We discretize
\begin{equation}\label{eq:helmholtz-strip}
  -\Delta u - \omega^2 u = f
\end{equation}
on the same $51\times N$ grid and with the same $25\mid1\mid25$ split as
\Cref{ss:poisson-d2}, so that the Schur complement is again $N\times N$ on
the separator. With the five-point stencil scaled by $h^2$, where $h=1/(N+1)$ is the
grid spacing, the discrete
operator is the five-point Laplacian shifted by $-(\omega h)^2$ on the
diagonal. We hold the resolution fixed at $32$ grid points per
wavelength, so that $\omega h$ is constant and the separator spans
approximately $N/32$ wavelengths. Increasing $N$ therefore tests the
adaptive construction as the frequency increases.

The strip acts as a waveguide with reflecting walls. At approximately
$1.6$ wavelengths across, it supports three propagating transverse sine
modes; the remaining modes are evanescent. Widening the strip increases
the number of propagating modes and the detected ranks. For $N=4096$
and $\epsilon=10^{-6}$, the largest rank in a single sample stream is
$14$, $25$, $39$, and $46$ at $n_x=51$, $101$, $201$, and $401$,
respectively (the three-stream median at $n_x=51$ is $15$). For the
fixed width used in \Cref{t:helmholtz-schur}, each $256$-point leaf
spans eight wavelengths, independently of $N$. Its rank therefore does
not grow with $N$; the growth of $k_{\max}$ in \Cref{t:helmholtz-schur}
occurs at the coarser nodes, which cover progressively more wavelengths.

The matvec count remains $288$ at $\epsilon=10^{-3}$ and $10^{-6}$ and
$304$ at $10^{-9}$, as for the Poisson Schur complement. The maximum rank grows from $10$ to $15$
at $\epsilon=10^{-3}$, from $15$ to $24$ at $10^{-6}$, and from $18$ to
$30$ at $10^{-9}$, while storage and factorization time approximately
double with $N$. Unpreconditioned
GMRES requires $1370$--$5100$ iterations at tolerance $10^{-12}$.
The HSS preconditioner reduces this count to at most $10$, a reduction
of more than two orders of magnitude throughout the table.

The solve residual is more sensitive than in the Poisson case: its median
ranges from $17\epsilon$ to $182\epsilon$. To distinguish approximation
quality from solve accuracy, we estimate the compression error after
construction. The estimate is
$\|(A-A_{\mathrm{approx}})\Xi\|_{\mathrm F}/\|A\Xi\|_{\mathrm F}$,
where $\Xi$ is an independent Gaussian test matrix with eight columns. It
is reported as cmp err and measures the operator approximation, whereas
relres measures the direct solve. The
median compression error is $0.4\epsilon$--$6.3\epsilon$. The larger
solve residual indicates sensitivity to these perturbations; the two
metrics alone do not separate compression error from finite-precision
error in elimination or inversion.

Local diagnostics identify a potential amplification mechanism. The
elimination factors contain $X^{sr}(X^{rr})^{-1}$, which can magnify
errors passed between levels. At $N=\num{16384}$ and
$\epsilon=10^{-6}$, the largest observed $\kappa(X^{rr})$ is
$1.2\times10^4$ and the largest $\|X^{sr}(X^{rr})^{-1}\|$ is
$2.6\times10^3$. For comparison, the Schur complement of the 3D Poisson
equation at $N=256^2$ and $\epsilon=10^{-9}$ has coarsest-level
condition numbers between $10$ and $55$ and amplification factors at
most $2.5$.

Across the three sample streams, the matvec count is identical and
storage varies by less than one percent, but the residual varies by up
to a factor of twelve (Supplementary Material, Section~S1). At
$\epsilon=10^{-9}$, the spread is at most a factor of six and no
preconditioned solve stagnates.

The largest tested size is $N=\num{16384}$. The closed strip has
increasingly dense resonances as $N$ grows. The frequency tests in
the Supplementary Material, Section~S3, show that near-resonance limits the attainable
accuracy for larger sizes in this undamped configuration.

\begin{table}
    \centering
    \scriptsize
    \setlength{\tabcolsep}{3pt}
    \caption{\em 2D Helmholtz Schur complement at $32$ grid points per
    wavelength (\Cref{ss:helmholtz-schur}), medians over three matched
    sample streams. $k_{\max}$ is the largest rank over all levels, and
    cmp err is the compression error of
    \Cref{ss:helmholtz-schur}. $\tilde n_{\text{it}}$ depends on $N$ only.}
    \label{t:helmholtz-schur}
    \begin{tabular}{@{}ccccccccccc@{}}
    \toprule
    $\epsilon$ & $N$ & $k_{\max}$ & $n_{\text{mv}}$
    & $t_{\text{fact}}$ & $t_{\text{solve}}$ & $\text{cmp err}$ & $\text{relres}$
    & $\text{mem}_{\text{MB}}$ & $n_\text{it}$ & $\tilde n_\text{it}$ \\
    \midrule
    \multirow{3}{*}{$10^{-3}$}
     & $2^{12}$  & 10 & 288 & \num{4.6e-1} & \num{2.0e-2} & \num{3.9e-4} & \num{1.7e-2} & 8.8e+0 & 8 & 1370 \\
     & $2^{13}$  & 12 & 288 & \num{8.4e-1} & \num{4.0e-2} & \num{1.5e-3} & \num{1.8e-1} & 1.8e+1 & 10 & 2604 \\
     & $2^{14}$  & 15 & 288 & 1.6e+0 & \num{8.0e-2} & \num{1.4e-3} & \num{6.1e-2} & 3.5e+1 & 10 & 5100 \\
    \midrule
    \multirow{3}{*}{$10^{-6}$}
     & $2^{12}$  & 15 & 288 & \num{4.6e-1} & \num{2.0e-2} & \num{9.3e-7} & \num{3.5e-5} & 9.1e+0 & 4 & 1370 \\
     & $2^{13}$  & 20 & 288 & \num{8.5e-1} & \num{4.0e-2} & \num{1.3e-6} & \num{4.5e-5} & 1.8e+1 & 3 & 2604 \\
     & $2^{14}$  & 24 & 288 & 1.6e+0 & \num{7.0e-2} & \num{6.3e-6} & \num{1.1e-4} & 3.7e+1 & 4 & 5100 \\
    \midrule
    \multirow{3}{*}{$10^{-9}$}
     & $2^{12}$  & 18 & 304 & \num{4.6e-1} & \num{2.0e-2} & \num{6.8e-10} & \num{3.8e-8} & 9.5e+0 & 2 & 1370 \\
     & $2^{13}$  & 27 & 304 & \num{8.7e-1} & \num{4.0e-2} & \num{6.4e-10} & \num{3.1e-8} & 1.9e+1 & 2 & 2604 \\
     & $2^{14}$  & 30 & 304 & 1.7e+0 & \num{7.0e-2} & \num{2.6e-9} & \num{8.9e-8} & 3.8e+1 & 2 & 5100 \\
    \bottomrule
    \end{tabular}
\end{table}

\subsection{Mat\'{e}rn Covariance Matrices}\label{ss:kernels}\label{ss:matern-kernel-2D}

Mat\'{e}rn covariance matrices, widely used to model spatial
correlations~\cite{genton2002classes,rasmussen2005gaussian}, test the
method on ill-conditioned SPD problems. Forming the dense matrix requires
$\mathcal O(N^2)$ kernel evaluations and entries. On a uniform grid,
translation invariance gives block Toeplitz structure with Toeplitz blocks,
allowing matvecs in $\mathcal O(N\log N)$ time through a two-dimensional
FFT. Reusing a factorization can be useful for multiple right-hand sides
at fixed covariance parameters, as in kriging or conditional simulation.
The kernel is
\begin{equation}\label{eq:matern-kernel}
K(r) = \frac{2^{1-\nu}}{\Gamma(\nu)}
\left( \frac{\sqrt{2\nu}\,\|r\|_2}{\ell} \right)^\nu
K_\nu\!\left( \frac{\sqrt{2\nu}\,\|r\|_2}{\ell} \right),
\end{equation}
where $\nu$ controls smoothness, $\ell$ is the correlation length,
$\Gamma$ is the gamma function, and $K_\nu$ is the modified Bessel
function of the second kind. We discretize on a uniform
$n \times n$ grid over $[0,1]^2$ with
$N = n^2$ points, giving
\begin{equation}\label{eq:matern-matrix-entries}
A_{ij} =
\begin{cases}
K(x_i - x_j), & i \neq j, \\
1 + \sigma^2, & i = j,
\end{cases}
\end{equation}
where the nugget $\sigma^2 = 10^{-4}$ is added for numerical stability.
We compare $(\nu,\ell)=(0.5,0.5)$ with $(1.5,0.1)$; the latter has
higher smoothness and a shorter correlation length. Both matrices are
highly ill-conditioned. At $N=256^2$, $\kappa(A)\approx8.1\times10^6$
for $(0.5,0.5)$ and $3.2\times10^7$ for $(1.5,0.1)$. Over the tested
sizes, unpreconditioned CG needs $538$ to $4996$ iterations.
\Cref{t:matern-2D} presents the results.

\begin{table}
    \centering
    \scriptsize
    \setlength{\tabcolsep}{2pt}
    \caption{\em Mat\'{e}rn covariance matrices (\Cref{ss:matern-kernel-2D})
    for $(\nu,\ell)=(0.5,0.5)$ and $(1.5,0.1)$: maximum rank at each
    level and performance as $N$ and $\epsilon$ vary. Unpreconditioned
    counts were computed separately with the same matrix, right-hand side,
    and solver tolerance; they are independent of $\epsilon$.}
    \label{t:matern-2D}
    \begin{tabular}{cccccccccccccc}
        \toprule
        & & \multicolumn{5}{c}{Max rank at level} & & & & & & \\
        \cmidrule(lr){3-7}
        $\epsilon$ & $N$ & 5 & 4 & 3 & 2 & 1
        & $n_{\text{mv}}$ & $t_{\text{fact}}$
        & $t_{\text{solve}}$ & relres & mem$_{\text{MB}}$
        & $n_\text{it}$ & $\tilde{n}_\text{it}$ \\ \midrule \multicolumn{14}{l}{{\boldmath$(\nu,\ell)=(0.5,0.5)$}}\\ \midrule \multirow{3}{*}{$10^{-9}$}
         & $128^2$ & -- & -- & 174 & 378 & 384 & 1896 & 1.5e+1 & \num{1.0e-1} & \num{1.1e-10} & 2.9e+2 & 2 & 538 \\
         & $256^2$ & -- & 162 & 350 & 608 & 640 & 3072 & 1.1e+2 & \num{4.5e-1} & \num{3.8e-10} & 1.3e+3 & 2 & 1053 \\
         & $512^2$ & 153 & 319 & 544 & 1056 & 1056 & 5280 & 6.3e+2 & 1.5e+0 & \num{1.9e-9} & 5.5e+3 & 4 & 1689 \\
        \midrule
        \multirow{3}{*}{$10^{-12}$}
         & $128^2$ & -- & -- & 225 & 512 & 576 & 2624 & 2.9e+1 & \num{1.2e-1} & \num{8.5e-14} & 4.7e+2 & 1 & 538 \\
         & $256^2$ & -- & 218 & 480 & 928 & 960 & 4672 & 2.3e+2 & \num{6.6e-1} & \num{3.4e-13} & 2.4e+3 & 1 & 1053 \\
         & $512^2$ & 211 & 473 & 864 & 1728 & 1760 & 8672 & 1.9e+3 & 3.6e+0 & \num{7.3e-13} & 1.2e+4 & 1 & 1689 \\
        \midrule[\heavyrulewidth] \multicolumn{14}{l}{{\boldmath$(\nu,\ell)=(1.5,0.1)$}}\\ \midrule \multirow{3}{*}{$10^{-9}$}
         & $128^2$ & -- & -- & 182 & 414 & 448 & 2104 & 1.8e+1 & \num{1.2e-1} & \num{1.8e-9} & 3.3e+2 & 3 & 2392 \\
         & $256^2$ & -- & 161 & 352 & 640 & 736 & 3296 & 1.1e+2 & \num{4.7e-1} & \num{2.7e-8} & 1.4e+3 & 4 & 3925 \\
         & $512^2$ & 140 & 288 & 480 & 960 & 1216 & 5376 & 4.9e+2 & 1.3e+0 & \num{1.4e-6} & 4.7e+3 & 8 & 4996 \\
        \midrule
        \multirow{3}{*}{$10^{-12}$}
         & $128^2$ & -- & -- & 229 & 541 & 608 & 2772 & 3.0e+1 & \num{1.4e-1} & \num{5.1e-13} & 5.1e+2 & 1 & 2392 \\
         & $256^2$ & -- & 213 & 480 & 960 & 1056 & 4896 & 2.2e+2 & \num{6.5e-1} & \num{7.0e-12} & 2.5e+3 & 2 & 3925 \\
         & $512^2$ & 198 & 443 & 832 & 1696 & 1888 & 8672 & 1.7e+3 & 2.9e+0 & \num{4.1e-11} & 1.1e+4 & 2 & 4996 \\
        \bottomrule
    \end{tabular}
\end{table}

The HSS factors substantially reduce the Krylov iteration counts. At
$\epsilon=10^{-12}$, PCG converges in one or two iterations for every
reported size and parameter set; at $10^{-9}$, it requires two to eight.
We focus on these tolerances because, in runs not tabulated here, looser
compression gave weaker preconditioners for these ill-conditioned
matrices.

Direct-solve accuracy differs markedly between the parameter sets.
For $(\nu,\ell)=(0.5,0.5)$, relres remains at or below $2\epsilon$,
and PCG requires at most four iterations. For $(1.5,0.1)$ at
$\epsilon=10^{-9}$, relres$/\epsilon$ grows from $1.8$ at $N=128^2$
to $27$ at $256^2$ and $1350$ at $512^2$. In the largest case, the
direct-solve residual is $1.4\times10^{-6}$ and PCG requires $8$
iterations. Tightening to $\epsilon=10^{-12}$ reduces the direct-solve
residual to $4.1\times10^{-11}$ and the PCG count to two. These are
distinct accuracy measures: the tabulated relres precedes PCG refinement.

The cost must therefore be interpreted together with the delivered
accuracy. At $N=512^2$ and $\epsilon=10^{-9}$, the short-correlation
case $(1.5,0.1)$ needs less factorization time than $(0.5,0.5)$, $490$
versus $630$ s, and less storage, $4.7$ versus $5.5$ GB. Its
direct-solve residual, however, is much larger. The same local tolerance does not yield
comparable global solve accuracy for these two matrices.

The factorization time $t_{\mathrm{fact}}$, which excludes the matvecs,
grows with empirical exponents between $1.2$ and $1.5$ over the tested
range, with the larger exponent at the tighter tolerance. Memory grows by
a factor of $3.4$ to $5.1$ per quadrupling of $N$ in the four series,
bracketing the factor of about $4.5$ of $\mathcal{O}(N\log N)$ and
consistent with the storage bound of \Cref{th:complexity-adaptive}.

\subsection{The Lippmann--Schwinger Equation}\label{app:lippmann}

The preceding examples use real matrices. Our final test is an oscillatory,
complex-symmetric matrix satisfying $A^\top=A$ but $A^*\ne A$. The
specialization of \Cref{s:fixed-rank} sets $V_i=\overline{U_i}$ and uses
$U_i^*A_{ii}\overline{U_i}$, so only matvecs with $A$ are needed.
The construction uses circular complex Gaussian samples.

Consider the Lippmann--Schwinger equation for time-harmonic scattering:
\begin{equation} \label{e:ls}
\sigma(x) + \omega^2 \eta(x) \int_D G(x-y)\,\sigma(y)\,dy
= -\omega^2 \eta(x)\,u_0(x), \quad x \in D,
\end{equation}
where $\sigma$ is unknown, $u_0$ is the incoming wave,
$0\le\eta\le1$ is the scattering potential, and
$G(r)=\tfrac{\mathrm{i}}{4}H_0^{(1)}(\omega\|r\|_2)$ is the free-space Green's
function for $\Delta+\omega^2$. Here $\omega=2\pi q$, with $q$ wavelengths
per unit length.

We set $D=[0,1]^2$ with a uniform $\sqrt{N}\times\sqrt{N}$
grid. Writing $\sigma(x)=\sqrt{\eta(x)}\mu(x)$ gives a symmetrized
integral equation, whose piecewise-constant collocation yields the
complex-symmetric matrix
\begin{equation}\label{e:helmholtz-2D-gaussian-discretized}
A_{ij} = \begin{cases}
\displaystyle h^2 \omega^2 \sqrt{\eta(x_i)\,\eta(x_j)}\,G(x_i - x_j),
& i \ne j, \\[6pt]
\displaystyle 1 + \omega^2 \eta(x_i) \!\int_{-h/2}^{h/2}\!\int_{-h/2}^{h/2}
\frac{\mathrm{i}}{4}\,H_0^{(1)}(\omega\sqrt{u^2+v^2})\,du\,dv,
& i = j,
\end{cases}
\end{equation}
where $h = 1/\sqrt{N}$, and the diagonal entries are computed by adaptive
quadrature using MATLAB's \texttt{integral2}. The Green's convolution
matrix is block Toeplitz with Toeplitz blocks on this grid. Applying $A$
combines this convolution with diagonal scaling by $\sqrt{\eta}$ and
the identity, so a matvec still costs $\mathcal O(N\log N)$ through a
two-dimensional FFT. The spatially varying potential means that $A$
itself is not Toeplitz. The symmetrized system remains defined where
$\eta=0$, with the corresponding rows giving $\mu=0$.

The scattering potential is the cavity of~\cite[Section~6.2]{gopal2022accelerated}:
in polar coordinates $(r,\theta)$ about $c=(1/2,1/2)$,
\begin{equation}\label{e:cavity-potential}
  \eta(x)=\bigl(1-\sin^{500}(\theta/2)\bigr)\,
  \exp\bigl(-2000\,(0.1-r^2)^2\bigr),
\end{equation}
a thin ring of radius $\sqrt{0.1}$ with a narrow opening on the side
facing $-x_1$. Field trapping in the ring produces near-resonant behavior
that is challenging for iterative solvers. The potential is negligible
over much of the domain, leaving many leaf off-diagonal blocks numerically
zero. This motivates the levelwise absolute tolerance in the stopping
test of \Cref{ss:build-u-v-from-nullspace}. We maintain $32$ grid points per
wavelength, so $N=(32q)^2$ and the frequency increases with $N$.

To illustrate the scattering geometry, \Cref{f:cavity-fields} shows the
potential and total field $u=u_0+\int_D G\sigma$ for a plane wave
$u_0(x)=e^{\mathrm{i}2\pi qx_1}$ incident from the left. The maximum field
magnitudes are $2.8$ and $6.1$ times the incident amplitude at $q=8$ and
$16$, respectively. These field plots use the physical scattering
right-hand side; the performance table uses the manufactured right-hand
side $f=Ax_\star$ specified in \Cref{ss:setup}.

\begin{figure}
    \centering
    \begin{minipage}[t]{0.235\textwidth}
        \centering
        \includegraphics[width=\linewidth]{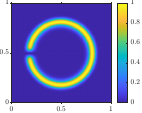}\\[-0.3em]
        \small (a) potential $\eta$
    \end{minipage}
    \hfill
    \begin{minipage}[t]{0.235\textwidth}
        \centering
        \includegraphics[width=\linewidth]{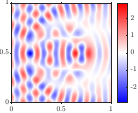}\\[-0.3em]
        \small (b) $\operatorname{Re}u$, $q=8$
    \end{minipage}
    \hfill
    \begin{minipage}[t]{0.235\textwidth}
        \centering
        \includegraphics[width=\linewidth]{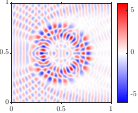}\\[-0.3em]
        \small (c) $\operatorname{Re}u$, $q=16$
    \end{minipage}
    \hfill
    \begin{minipage}[t]{0.235\textwidth}
        \centering
        \includegraphics[width=\linewidth]{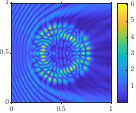}\\[-0.3em]
        \small (d) $|u|$, $q=16$
    \end{minipage}
    \caption{The cavity problem of \Cref{app:lippmann}: (a) the scattering
    potential $\eta$ of \Cref{e:cavity-potential}; (b), (c) the real part
    of the total field $u$ at $q=8$ and $q=16$; (d) its magnitude at
    $q=16$. The axes are $x_1$ (horizontal) and $x_2$ (vertical) on
    $[0,1]^2$, and the wave is incident from the left. The field is
    concentrated inside the ring, with greater amplitude at $q=16$ in
    these two examples. Panel (b) uses the cavity and frequency of~\cite[Figure~10]{gopal2022accelerated}.}
    \label{f:cavity-fields}
\end{figure}

\Cref{t:helmholtz-2D-step-32-gaussian} reports the results. Tightening
$\epsilon$ reduces the direct-solve residual at every size. At $N=128^2$,
for example, relres falls from $3.2\times10^{-4}$ to
$4.6\times10^{-12}$ as $\epsilon$ decreases from $10^{-3}$ to
$10^{-12}$. At fixed $\epsilon$, accuracy deteriorates as the frequency
and matrix size increase. At $N=1024^2$ and $\epsilon=10^{-3}$, the
residual reaches $0.11$, so the factorization is useful primarily as a
preconditioner.

On these runs, the compression-error estimate of
\Cref{ss:helmholtz-schur} (not tabulated) lies between $0.3$ and $3.4$
times relres.
Thus, unlike the Helmholtz Schur complement, the operator approximation
and the solve residual lose accuracy at comparable rates. Each size step adds a tree
level and doubles the frequency. The level-dependent tolerance of
\crefrem{rem:heuristics} relaxes by a factor of two per level; error
propagation through elimination and increasingly oscillatory interactions
can contribute further. These effects are consistent with the observed
growth in residual, but the measurements do not isolate their individual
contributions.

Increasing frequency and tighter tolerances also increase the retained
ranks. At $N=512^2$, the root dimensions are $384$, $1152$, $2176$, and
$9728$ at the four tolerances; no run was made at $N=1024^2$ and
$\epsilon=10^{-12}$. For $\epsilon=10^{-3},10^{-6},10^{-9}$,
factorization time increases by a factor of approximately $5$ to $9$
when $N$ quadruples, and storage by a factor of $4$ to $5.5$. At
$\epsilon=10^{-12}$, the time increases by a factor of approximately
$11$ at each of the two size steps. The data show the
cost of growing ranks; they do not establish an asymptotic rate.

For the fixed right-hand side and solver tolerance, unpreconditioned
GMRES requires $23$, $58$, $214$, and $785$ iterations as $N$ grows from
$128^2$ to $1024^2$. If the count continued to grow proportionally to
$N$, its matvec work alone would scale as $\mathcal O(N^2\log N)$.
Under the two-dimensional rank model of \Cref{th:complexity-adaptive},
the HSS construction instead has cost $\mathcal O(N^{1.5}\log N)$. This conditional
comparison motivates factorization for increasingly difficult iterative
solves. The tested sizes, however, do not establish an asymptotic
crossover, and a timing comparison must include construction cost and
match the delivered accuracy.

Preconditioning substantially reduces the iteration counts: they are
$2$--$3$ for $\epsilon\le10^{-6}$ and at most $12$ for
$\epsilon=10^{-3}$. In particular, at $N=1024^2$ the coarse
factorization reduces the GMRES count from $785$ to $12$ despite its
direct-solve residual of $0.11$. Tightening the compression tolerance
does not eliminate the accuracy gap: at $\epsilon=10^{-12}$, relres
increases from $4.6\times10^{-12}$ at $N=128^2$ to
$1.5\times10^{-10}$ at $512^2$. At the latter size, storage increases
from $7.8$ GB at $\epsilon=10^{-9}$ to $14$ GB at $10^{-12}$. This
tradeoff illustrates the limitations of controlling a global solve
through local compression tolerances. Specialized high-frequency direct
solvers offer another approach to this
regime~\cite{gillman2015spectrally,gopal2022accelerated}.

Some preconditioned GMRES runs stagnate at relative preconditioned
residuals of $1.9\times10^{-12}$ to $7.0\times10^{-12}$, slightly above
the requested tolerance. For these runs, MATLAB returns a
stagnation flag. The starred count is the first iteration whose residual
norm lies within $1\%$ of the smallest value in the recorded history.
Later stalled iterations are not counted. The starred count is therefore
not a count of iterations to convergence at $10^{-12}$.

\begin{table}
  \centering
  \footnotesize
  \caption{\em Lippmann--Schwinger results for the cavity potential
  (\Cref{app:lippmann}) as $N$ and $\epsilon$ vary. Here $q$ is the
  number of wavelengths per unit length and $|A^{(0)}|$ is the root
  dimension. For the fixed right-hand side and solver tolerance,
  $\tilde n_\text{it}$ is independent of $\epsilon$. Starred counts mark
  stagnated preconditioned runs (see the text).}
  \label{t:helmholtz-2D-step-32-gaussian}

    \setlength{\tabcolsep}{4pt}
    \begin{tabular}{ccccccccccc}
      \toprule
      $\epsilon$ & $N$ & $q$ & $\big|A^{(0)}\big|$ & $n_{\text{mv}}$ & $t_{\text{fact}}$ & $t_{\text{solve}}$ & $\text{relres}$ & $\text{mem}_{\text{MB}}$ & $n_\text{it}$ & $\tilde{n}_\text{it}$ \\
      \midrule
      \multirow{4}{*}{$10^{-3}$}
      & $128^2$ & 4 & 256 & 364 & 5.0e+0 & \num{1.2e-1} & \num{3.2e-4} & 1.1e+2 & 4 & 23 \\
      & $256^2$ & 8 & 256 & 480 & 2.4e+1 & \num{4.8e-1} & \num{1.7e-3} & 5.2e+2 & 5 & 58 \\
      & $512^2$ & 16 & 384 & 640 & 1.2e+2 & 2.5e+0 & \num{8.0e-3} & 2.3e+3 & 7 & 214 \\
      & $1024^2$ & 32 & 640 & 800 & 5.7e+2 & 7.8e+0 & \num{1.1e-1} & 8.7e+3 & 12 & 785 \\
      \midrule
      \multirow{4}{*}{$10^{-6}$}
      & $128^2$ & 4 & 512 & 768 & 1.1e+1 & \num{1.8e-1} & \num{2.7e-7} & 2.2e+2 & 2 & 23 \\
      & $256^2$ & 8 & 768 & 1216 & 6.8e+1 & \num{7.1e-1} & \num{1.5e-6} & 1.1e+3 & 3 & 58 \\
      & $512^2$ & 16 & 1152 & 1952 & 4.4e+2 & 2.5e+0 & \num{1.0e-5} & 5.2e+3 & 3 & 214 \\
      & $1024^2$ & 32 & 2176 & 3360 & 3.3e+3 & 1.1e+1 & \num{1.7e-4} & 2.3e+4 & $3^\ast$ & 785 \\
      \midrule
      \multirow{4}{*}{$10^{-9}$}
      & $128^2$ & 4 & 768 & 1008 & 1.4e+1 & \num{1.5e-1} & \num{5.2e-10} & 2.7e+2 & 2 & 23 \\
      & $256^2$ & 8 & 1280 & 1856 & 1.1e+2 & \num{7.0e-1} & \num{1.3e-9} & 1.5e+3 & 2 & 58 \\
      & $512^2$ & 16 & 2176 & 3488 & 8.9e+2 & 3.4e+0 & \num{1.4e-8} & 7.8e+3 & $2^\ast$ & 214 \\
      & $1024^2$ & 32 & 3968 & 6368 & 8.0e+3 & 1.5e+1 & \num{5.8e-8} & 3.8e+4 & $2^\ast$ & 785 \\
      \midrule
      \multirow{3}{*}{$10^{-12}$}
      & $128^2$ & 4 & 1408 & 1592 & 3.0e+1 & \num{2.1e-1} & \num{4.6e-12} & 4.5e+2 & 2 & 23 \\
      & $256^2$ & 8 & 3840 & 4224 & 3.3e+2 & 1.1e+0 & \num{4.9e-11} & 2.7e+3 & 2 & 58 \\
      & $512^2$ & 16 & 9728 & 10720 & 3.7e+3 & 5.7e+0 & \num{1.5e-10} & 1.4e+4 & $2^\ast$ & 214 \\
      \bottomrule
    \end{tabular}

\end{table}

\section{Conclusions} \label{s:end}

We developed an adaptive HSS construction that combines fully matrix-free access, tolerance-driven local rank selection, $\mathcal{O}(k)$ matvecs, and simultaneous compression and factorization. The algorithm needs only matvecs with $A$ and, in general, with its transpose or adjoint. It queries no matrix entries. It produces the HSS factorization in the same upward pass as the compression. To our knowledge, no previous HSS method combines all four
properties. The adaptive construction extends fixed-rank block nullification:
it continues to use the existing samples and bases, selects local ranks from
randomized Frobenius-norm estimates, and extends each nullspace basis by a
unitary Householder row update. The resulting local dense work is
$\mathcal{O}(k^3)$, which matches the asymptotic cost when the ranks are known in
advance. Pruning the last basis block removes much of the rank overshoot without reprocessing the accumulated basis.

The experiments confirm the predicted behavior where the rank model
holds. On the 2D Poisson Schur complement, the ranks stay bounded, the
matvec count is $288$ or $304$ independently of $N$ up to $N=2^{21}$, and
time and storage scale linearly. On the 3D Poisson Schur complement, the
ranks grow like $\sqrt N$, as the two-dimensional model predicts, and the
method detects this growth without any rank input. On the Helmholtz Schur
complement, the Mat\'{e}rn matrices, and the Lippmann--Schwinger cavity,
the factorization reduces the Krylov iteration counts, which reach the
hundreds or thousands at the larger sizes, to at most twelve.

The experiments also show a central limitation. Local randomized tolerances
do not directly control the final solve residual. Rank growth,
ill-conditioned compressed diagonal blocks, and error accumulation through
the triangular elimination factors can amplify local errors across the tree.
In these cases, the factorization can still provide an effective
preconditioner. Predicting end-to-end accuracy, however, requires a unified
error budget that couples the sampling tolerance $\epsilon$, the tree depth $L$, the local conditioning of the elimination blocks, and the working precision $u$. Developing this error budget is the most important next step toward a
fully accuracy-controlled matrix-free factorization.

\begin{acknowledgements}
\begin{sloppypar}
The authors thank Per-Gunnar Martinsson for helpful discussions.
The authors used generative-AI tools, including OpenAI's ChatGPT and Codex
and Anthropic's Claude, for language editing, manuscript organization, and
assistance with software development and debugging. The authors reviewed
and, where appropriate, revised all AI-assisted text and code, and they
verified all numerical claims and software changes. The authors take full
responsibility for the manuscript and accompanying software.
\end{sloppypar}
\end{acknowledgements}
\section*{Declarations}

\paragraph{Funding.}
This work used Stampede3 at the Texas Advanced Computing Center through
allocation TG-MTH250007 from the Advanced Cyberinfrastructure Coordination
Ecosystem: Services \& Support (ACCESS) program~\cite{access2023}, which is
supported by U.S. National Science Foundation grants \#2138259, \#2138286,
\#2138307, \#2137603, and \#2138296.

\paragraph{Competing interests.}
The authors declare that they have no competing interests.

\paragraph{Data availability.}
No external datasets were used. Every matrix in \Cref{s:result} is generated
procedurally from the discretizations and kernels described there. The
numerical data generated during this study, including the result files
supporting the tables and figures, are available from the corresponding
author upon reasonable request.

\paragraph{Code availability.}
The MATLAB implementation requires no third-party toolboxes and is publicly
available, together with the deterministic drivers and job scripts that
generate every table in \Cref{s:result}, at
\url{https://github.com/SMART-Algebra/Adaptive-Matfree-HSS}; release v1.0.0
produces the results reported here.

\paragraph{Author contributions.}
Chao Chen: Conceptualization, Methodology, Validation, Writing (review and
editing), Supervision. Elly Do: Methodology, Software, Investigation, Writing
(original draft). Under Chao Chen's supervision, Elly Do independently
developed the technical details, implemented the algorithms, conducted the
numerical experiments, and prepared the original manuscript as a doctoral
researcher. Both authors read and approved the final manuscript.

\appendix
\section{Diagnostics and Parameter Choices}\label{app:parameters}

We retain the evidence supporting unregularized local recovery and
last-block rank pruning. The Supplementary Material, appended after the references, contains the sampling
protocol, solve-accuracy and Helmholtz resonance diagnostics,
level-by-level conditioning measurements, and block-size/pruning studies.

\subsection{Conditioning of the Local Least-Squares Problems}
\label{ss:ls-conditioning}

At every node, the recovery in \Cref{ss:x_rr_rs_sr} solves an overdetermined
system with coefficient matrix $\tilde\Omega_i^\top$. We record its aspect
ratio $\rho$ (number of equations divided by number of unknowns) and
condition number $\kappa(\tilde\Omega_i)$. \Cref{tab:ls-conditioning}
summarizes four representative problems; the Supplementary Material,
Section~S4, gives the measurements at each level.

\begin{table}[htbp]
\centering
\small
\caption{\em Local least-squares conditioning at $\epsilon=10^{-6}$,
leaf size $256$, and the block sizes of \Cref{ss:setup}; $N=8192$ in 1D
and $N=16384$ in 2D. Each range spans the reported tree levels; the
$\kappa$ column gives the range of level medians, not extrema over all
nodes. Each problem uses one fixed sample stream.}
\label{tab:ls-conditioning}
\begin{tabular}{lrr}
\toprule
Problem & Aspect ratio $\rho$ & Level-median $\kappa$\\
\midrule
1D Poisson Schur & $1.12$--$10.29$ & $1.8$--$31.4$\\
2D Poisson Schur & $1.27$--$2.75$ & $3.9$--$16.4$\\
2D Mat\'{e}rn & $1.25$--$2.11$ & $5.3$--$17.4$\\
Lippmann--Schwinger & $1.20$--$1.75$ & $6.8$--$20.7$\\
\bottomrule
\end{tabular}
\end{table}

The largest values occur at nearly square 1D leaves, with a median of
$31.4$ and a maximum of $39$ over nodes. These measurements did not
indicate a need for regularization in the tested problems, so we use
unregularized QR recovery. They do not, by themselves, bound the error in
the recovered blocks or the complete factorization.

For comparison, the Marchenko--Pastur spectral edges give the
large-matrix Gaussian benchmark
\begin{equation}\label{eq:mp-kappa}
  \kappa_{\mathrm{MP}}(\rho)
  =\frac{\sqrt\rho+1}{\sqrt\rho-1}.
\end{equation}
The measured level medians track this benchmark in both real and complex
arithmetic, including at coarser levels. This supports the practical reuse
of transformed samples, but does not establish that they remain Gaussian
or independent of the computed bases.

\subsection{Sampling Block Size and Rank Pruning}
\label{ss:block-size}

A larger block size $b$ gives fewer and wider updates and a lower-variance
stopping estimator, at the cost of coarser rank resolution. At a node of
dimension $m$, the implementation uses
\begin{equation}\label{eq:b-eff}
  b_{\mathrm{eff}}=\min\!\bigl(b,\lfloor m/4\rfloor\bigr),
\end{equation}
so the rank search retains at least four resolvable steps. At $N=16384$
and $\epsilon=10^{-9}$, increasing $b$ through powers of two from $4$ to
$128$ reduced local factorization time by factors of $2.4$--$3.9$ on the
two-dimensional problems. Very small blocks were also less accurate in
these tests. These observations motivate the defaults $b=16$ in 1D and
$b=32$ in 2D; the full timing, storage, and accuracy diagnostics are in
the Supplementary Material, Section~S5. The timing reductions exclude matvecs
and therefore need not translate into comparable end-to-end speedups.

Rank pruning reduces the sensitivity of storage to this choice.
\Cref{tab:prune-stability} compares three pruning scopes over the same
sweep. Pruning at every level gives the smallest storage variation on all
four problems. In the bounded-rank Poisson case, the max/min storage ratio
falls from $1.397$ without pruning to $1.003$ with pruning at every level.
Thus, pruning permits wider updates without a comparable storage penalty.

\begin{table}[htbp]
\centering
\small
\caption{\em Max/min factor-storage ratio over $b\in\{4,\dots,128\}$ for
three pruning scopes at $N=16384$, $\epsilon=10^{-9}$, and leaf size
$256$, using medians over four matched sample streams. Leaf ranks are
medians at the default block size ($b=16$ in 1D and $b=32$ in 2D).}
\label{tab:prune-stability}
\begin{tabular}{lcccc}
\toprule
Problem & leaf rank $k$ & no pruning & leaves only & every level\\
\midrule
1D Poisson            & $19$  & $1.397$ & $1.010$ & $\mathbf{1.003}$\\
2D Poisson            & $216$ & $1.091$ & $1.088$ & $\mathbf{1.068}$\\
2D Mat\'{e}rn         & $176$ & $1.313$ & $1.215$ & $\mathbf{1.198}$\\
Lippmann--Schwinger   & $119$ & $1.300$ & $1.194$ & $\mathbf{1.058}$\\
\bottomrule
\end{tabular}
\end{table}

\bibliographystyle{spmpsci}
\bibliography{biblio}

\clearpage
\includepdf[pages=-]{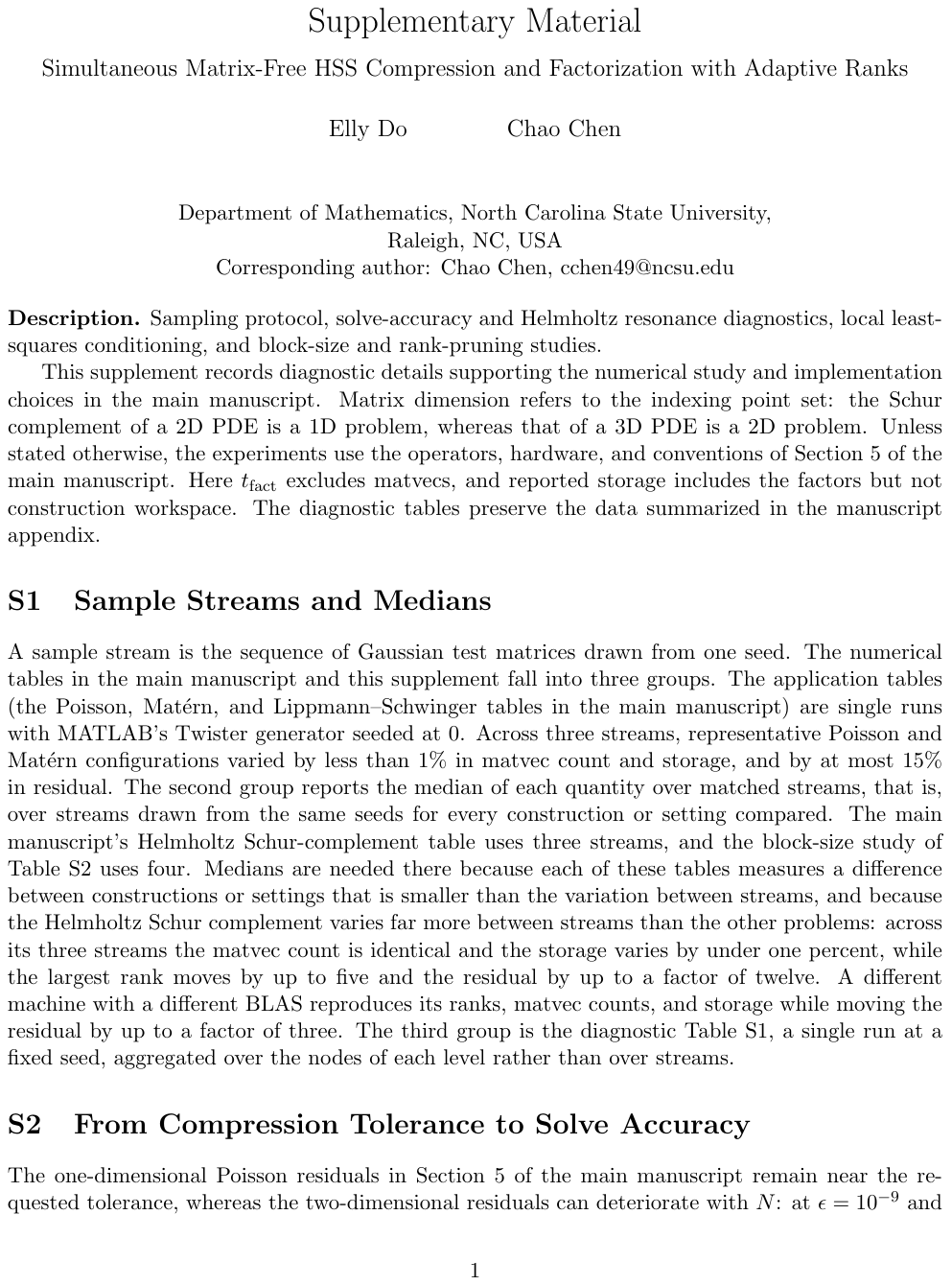}

\end{document}